\def\e{{\rm e}}
\def\eps{\varepsilon}
\def\d{{\rm d}}
\def\dim{{\rm dim}}
\def\ddt{\frac{\d}{\d t}}
\def\AA {{\mathfrak A}}
\def\R {\mathbb{R}}
\def\V {{\mathcal V}}
\def\B {{\mathcal B}}
\def\C {{\mathcal C}}
\def\D {{\rm dom}}
\def\E {{\mathcal E}}
\def\A {{\mathcal A}}
\def\AA {{\mathbb A}}
\def\K {{\mathcal K}}
\def\J {{\mathcal J}}
\def\I {{\mathcal I}}
\def\F {{\mathcal F}}
\def\Q {{\mathcal Q}}
\def \l {\langle}
\def \r {\rangle}
\def \pt {\partial_t}
\def \ptt {\partial_{tt}}
\def \and{\quad\text{and}\quad}
\newcommand{\cic}[1]{\mbox{\boldmath$#1$}}
\def \au {\rm}
\def \jou {\rm}
\def \no#1#2#3 {{\bf #1} (#3), #2.}
\def \eds#1#2#3 {#1, #2, #3.}
\newtheorem{proposition}{Proposition}[section]
\newtheorem{theorem}{Theorem}[section]
\newtheorem{corollary}{Corollary}[section]
\newtheorem{lemma}{Lemma}[section]
\theoremstyle{definition}
\newtheorem{remark}{Remark}[section]
\newtheorem*{remark*}{Remark}
\newtheorem*{warn*}{A word of warning}
\numberwithin{equation}{section}
\title[3-dimensional oscillon equation]
{The 3-dimensional oscillon equation}
\author[F.\ Di Plinio]
{Francesco Di Plinio}
\address{
Institute for Scientific Computing and Applied Mathematics
\newline\indent
Indiana University
\newline\indent
Bloomington, IN 47405 - USA}
\email{fradipli@indiana.edu {\rm (F.\ Di Plinio)} }
\email{temam@indiana.edu {\rm (R.\ Temam)} }
\author[G.\ S.\ Duane]
{Gregory S.\ Duane}
\address{
Rosenstiel School of Marine and Atmospheric Sciences
\newline\indent
University of Miami
\newline\indent
Miami, FL 33149
\newline\indent \vskip-3mm
Dept. of Atmospheric and Oceanic Sciences
\newline\indent
University of Colorado
\newline\indent
Boulder, CO 80309
}
\email{gregory.duane@colorado.edu {\rm (G.\ S.\ Duane)} }
\author[R.\ Temam]
{Roger Temam}
\thanks{This article appeared in print on Boll. Unione Mat. Ital. Ser. IX 5 (2012), no. 1, 19-54, MathScinet record at  http://www.ams.org/mathscinet-getitem?mr=2919647.
   Work partially
supported by the National Science Foundation under the grants
NSF-DMS-0604235, NSF-DMS-0906440, and by the Research Fund of Indiana University.}
\subjclass{Primary: 37L30, 35B41; Secondary: 83D05.}
 \keywords{Oscillon equation, nonautonomous attractors, fractal dimension.}
\dedicatory{Dedicated to the memory of Giovanni Prodi}
\begin{document}

\begin{abstract}On a bounded smooth domain $\Omega \subset \R^3$, we consider the generalized 
oscillon equation  $$\ptt u(x,t) +\omega(t) \pt u(x,t) -\mu(t)\Delta u(x,t) +
V'(u(x,t)) =0, \qquad x\in \Omega \subset \R^3, t \in \R,
$$
with Dirichlet boundary conditions, where $\omega$ is a time-dependent damping, $\mu$ is a time-dependent squared speed of propagation, and  $V$ is a nonlinear potential of critical growth.

Under structural assumptions on $\omega$ and $\mu$
we establish the existence  of a pullback global attractor
$\A=\A(t)$ in the sense of \cite{DDT}. Under additional assumptions on  $\mu$, which include the relevant physical cases, we obtain optimal regularity of the pullback global attractor and finite-dimensionality of the kernel sections.
\end{abstract}

\maketitle

\section{Introduction}
Let $\Omega \subset \R^3$ be a bounded domain with   smooth boundary. We consider the generalized 
oscillon equation  
\begin{equation}\ptt u(x,t) +\omega(t) \pt u(x,t) -\mu(t)\Delta u(x,t) +
V'(u(x,t)) =0, \qquad x\in \Omega \subset \R^3, t \in \R, \label{SYS-INTRO}
\end{equation}
with Dirichlet boundary conditions, where $\omega$ is a time-dependent damping, $\mu$ is a time-dependent squared speed of propagation, and  $V$ is a nonlinear potential. This equation, as well as its simpler version in dimension one studied in \cite{DDT},  has been proposed to described some long-lived structures (termed \emph{oscillons}) which appear in the formation of the universe; see e.g. \cite{Farhi05,FGGIRS},   for more details on the physical context, as well as \cite{DDT}. Further studies on the oscillon equation and on the physical context can be found in \cite{H}, and  \cite{Forgacs,FFM}, which are part of a series of articles on the subject.

 The concept of {\it pullback attractor} has been shown to capture an enlarged notion of
dissipativity that is applicable to Hamiltonian systems in which phase-space volume is conserved.
In previous work \cite{DDT}, it was suggested that the long-lived coherent
structures, {\it oscillons}, in a dynamical system describing a scalar field in an
expanding universe might naturally be described in terms of a non-trivial pullback attractor. Here
we show that the construct is structurally stable: the existence of a pullback attractor is established for a large class
of expansion scenarios. A pullback attractor is also shown to exist in three-dimensional models, consistently
with work on the occurrence of oscillons in three-dimensional particle physics models \cite{Farhi05}.
Gauge fields are probably required for three-dimensional oscillons to be long-lived, but metastable oscillons
have been studied in three-dimensional scalar field models \cite{Forgacs}, resembling the one used here.
 
The article is organized as follows. After this introduction, describing the background and motivations,  Section 2 summarizes the time-dependent attractor framework developed in \cite{DDT}.  The results of \cite{DDT} are supplemented with a new result (Corollary \ref{cor:uniqueness}) which establishes an  important uniqueness property of the pullback attractor. That is, if the existence Theorem \ref{existence} applies, then the resulting  pullback-bounded attractor is necessarily the unique pullback-bounded pullback attractor.

Section 3 contains the abstract formulation of the evolution problem associated with \eqref{SYS-INTRO} in the setting of time-dependent spaces, as well as the assumptions on the time-dependent damping  term $\omega$ and squared speed of propagation $\mu$. Regarding the nonlinear potential $V$, we require it to be of dissipative nature, and to have   polynomial growth $q $ at most $4$; the growth rate $q=4$ is critical for the well posedness of the problem (as well as for an autonomous damped wave equation in space dimension three) in the weak sense.  In the physical model from relativistic mechanics, $\mu$ is usually taken to be a decreasing function on $\R$, unbounded for $t \to -\infty $ and vanishing at $+\infty$; however, we are able to deal with a more general class of time-dependent $\mu$'s, not necessarily decreasing,  which are of interest for other physical models.  (for instance, wave propagation in media with time-dependent shape). Namely, we merely require that for each time $t$, the growth of $\mu$ on $(-\infty,t]$ is at most exponential, with rate comparable to the damping coefficient $  \omega(t)$. Under our assumptions, the damping term $\omega(t)$ is allowed to (possibly) vanish at $+\infty$; this ensures that the physical model of the reheating phase of inflation (see \cite{FGGIRS}) falls into the scope of our analysis.

In Section 4, we list, comment and motivate the main results of the paper. In Theorem \ref{wp:th}, we show that the evolution problem associated with \eqref{SYS-INTRO} generates a strongly continuous process $z \mapsto S(t,s)z$, depending continuously on the initial data $z$, and that the process $S(t,s)$ is of dissipative nature, i.e.\ possesses a pullback-bounded absorber.
With Theorem \ref{osc:attractor}, we establish the existence of a pullback attractor $\A=\{\A(t):t \in \R\}$ for the process $S(t,s)$. Regularity properties of the pullback attractor, namely, boundedness of the kernel sections $\A(t)$ in a more regular space, are addressed in Theorem \ref{thm:reg}. In order to obtain the regularity result, we exploit an additional integrability property of the time-derivatives of the solution; for this property to hold in our time-dependent setting, a further local $L^p$-integrability condition on $\mu'$, condition \eqref{pospart}, is needed. However, the scope of condition \eqref{pospart} includes a wide range of qualitative behaviors for $\mu$: for example, \eqref{pospart} holds whenever $\mu$ is a decreasing function, or more generally, has finitely many critical points; oscillatory behavior for arbitrarily large negative times is also allowed, under additional assumptions (see Remark \ref{rem:M2} for details).   Finally, we show in Theorem \ref{thm:fd} that the kernel sections $\A(t)$ have finite fractal dimension.\\ 
Sections 5 to 8 contain the proofs of the main results. 
\vskip2mm

This article is dedicated with much consideration to the memory of Giovanni Prodi, who did so much for the theory of partial differential equations, and especially for the Navier-Stokes equations. In particular, we mention the articles \cite{FP,LP, P} quoted so many times by the third author (RT).

\section{Attractors in Time-Dependent Spaces} \label{sect:attractors}
As anticipated in the Introduction, in this section we summarize the definitions and main results concerning attractors in time-dependent spaces. For the interested reader, complete proofs of the theorems and corollaries listed below, as well as comparison with the preexisting literature and appropriate references,  can be found in \cite{DDT}, where the  framework of time-dependent spaces has been introduced for the first time.

\subsection*{Process.} For $t \in \R$, let $X_t$ be a family of Banach spaces endowed with norms $\|\cdot\|_{X_t}$ (see  \eqref{spaces} for an example). A (continuous) \emph{process}  is a two-parameter family of
mappings $\{S(t,s): X_s \to X_t\}_{s \leq t}$ with properties
\begin{itemize}
  \item[(i)] $S(t,t) = \textrm{Id}_{X_t}$;
  \item[(ii)] $S(t,s) \in \C(X_s, X_t)$;
   \item[(iii)] $S(\tau,t)S(t,s) = S(\tau,s)$ for $s \leq t \leq \tau.$
\end{itemize}

\subsection*{Pullback-bounded family.} A family of subsets $\B=\{\B(t) \subset X_t\}_{t \in \R}$
is \emph{pullback-bounded} if \footnote{Here, for $D$ subset of a
Banach space $X$, $\displaystyle \|D\|_{X} = \sup_{z \in D}
\|z\|_{X}$.}
$$
R(t)=\sup_{s\in (-\infty, t]}\|\B(s) \|_{X_s} < \infty \qquad \forall t \in \R,
$$
{i.e. the function $s \mapsto \|\B(s) \|_{X_s}$ is bounded on  $s \in (-\infty,t]$ for each $t \in \R$.
}
\vskip2mm

\subsection*{Pullback absorber.} A  pullback-bounded family  $\AA=\{\AA(t)\}$ is called \emph{pullback absorber} if for every pullback-bounded family $\B$ and for every $t \in \R$ there exists $t_0=t_0(t)\leq t$ such that
$$
S(t,s) \B(s) \subset \AA(t), \qquad \forall s \leq t_0.
$$
\vskip2mm

\subsection*{Time-dependent $\omega$-limit.} Given a  family of sets $\B$, its time-dependent $\omega$-limit is the family
 $\omega_\B=\{\omega_\B(t) \subset X_t\}_{t \in \R}$, where $\omega_\B(t)$ is defined as
$$
\omega_\B(t)=\bigcap_{\tau \leq t} \overline{\bigcup_{s \leq \tau} S(t,s) \B(s)},
$$
and the above closures are  taken in $X_t$. A more concrete characterization is the following:
$$
\omega_\B(t) = \{z \in X_t : \exists\, s_n \to - \infty, z_n \in \B(s_n) \textrm{ with } \|S(t,s_n)z_n -z \|_{X_t} \to 0 \textrm{ as } n \to \infty\}.
$$

  \subsection*{Time-dependent global attractor.} A  family of compact subsets  $\A=\{\A(t) \subset X_t\}_{t \in \R}$ is called  \label{def:attractor}\emph{time-dependent global attractor} for the process $\{S(t,s)\}_{s\leq t}$ if it fulfills the following  properties:
\begin{itemize}
  \item[(i)](invariance) $S(t,s) \A(s) = \A(t)$, for every $s \leq t$ ;
  \item[(ii)] (pullback attraction) for every pullback-bounded family $\B$ and every $t \in \R$, \footnote{
For a Banach space $X$ and $A,B \subset X$, the Hausdorff semidistance is defined as
$$
\textrm{dist}_{X} (A,B) = \sup_{x \in A} \inf_ {y \in B} \|y-x\|_X.
$$
From the definition, $\textrm{dist}_{X} (A,B) =0$ if and only if $A$ is contained in the closure of $B$.
}   $$ \displaystyle \lim_{s \to - \infty } \textrm{dist}_{X_t} (S(t,s)\B(s),\A(t)) =0.$$
\end{itemize}
If property (ii) holds uniformly with respect to $t \in \R$, $\A$ is called a \emph{uniform} time-dependent global attractor.
\begin{remark} \label{uniqueness} In general, conditions (i)-(ii) are not sufficient to guarantee the  uniqueness of the time-dependent attractor.  For example, consider the simple ODE, $y'+y=0$, and denote by $S(\cdot,\cdot)$  the process it generates  on $\R$, i.e.\ $S(t,s) x = x\e^{-(t-s)}$. The process $S(\cdot,\cdot)$ has infinitely many time-dependent attractors in the sense of the definition above; they are given by $\A_c=\{\A_c(t)=c\e^{-t}\}, c\in \R$. \emph{However, only $\A_0$ is also a pullback-bounded family.}

\noindent 
Indeed, if we require in addition
\begin{itemize}
  \item[(iii)] $\A$ is a pullback-bounded family,
\end{itemize}
then there exists at most one family satisfying (i)-(iii), i.e.\ \textit{a
pullback-bounded time-dependent global attractor is unique in the
class of pullback-bounded families. } 
\end{remark}

\begin{remark} \label{uniquenessprime} Note that the definition does not require the time-dependent attractor $\{\A(t)\}_{t \in \R}$ to be pullback-bounded. However, the time-dependent attractor we are going to construct (see Theorem \ref{existence} and Corollary \ref{cor:uniqueness}) will indeed be pullback-bounded, and thus unique in the sense of Remark \ref{uniqueness}.\end{remark}
\subsection*{Existence of the global attractor}
The shorthand $\alpha_t$ stands for the Kuratowski measure \footnote{If $X$ is a Banach space, the \emph{Kuratowski measure of noncompactness} of a subset $A \subset X$ is defined by
$$
\alpha(A) = \inf \{\delta>0: A \textrm{ is covered by finitely many $X$-balls of radius }\delta \}.
$$ } in the space $X_t$. We remark that, for fixed $s,t\in \R$, $\alpha_s$ and $\alpha_t$ are equivalent measures of noncompactness whenever  there is a Banach space isomorphism between $X_s$ and $ X_t$.

\begin{theorem} \label{existence}
Assume that the process $S(\cdot,\cdot)$ possesses an absorber $\AA$ for which
\begin{equation} \label{alphaa}
\lim_{s \to  -\infty} \alpha_t(S(t,s)\AA(s)) =0, \qquad \forall t \in \R.
\end{equation}
Then, $\omega_\AA$
is a global attractor for $S(\cdot,\cdot).$
\end{theorem}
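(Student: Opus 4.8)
The plan is to show that $\omega_\AA$ satisfies properties (i) and (ii) of the definition of time-dependent global attractor, following the classical template for $\omega$-limit sets of absorbing families but carefully adapted to the time-dependent framework of this section. First I would establish that each section $\omega_\AA(t)$ is nonempty and compact in $X_t$. Compactness follows from the assumption \eqref{alphaa}: the sets $\overline{\bigcup_{s\le\tau}S(t,s)\AA(s)}$ form a decreasing family of closed sets whose Kuratowski measure of noncompactness tends to $0$ as $\tau\to-\infty$, since the tail is controlled by $\alpha_t(S(t,s)\AA(s))$ for $s$ large negative (using that $\AA$ absorbs itself, so $S(t,s)\AA(s)\subset\AA(t)$ for $s$ small enough, which also gives boundedness and hence nonemptiness of the nested intersection via a Cantor-type argument on sets of vanishing measure of noncompactness).

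Next I would prove pullback attraction (ii). Fix a pullback-bounded family $\B$ and $t\in\R$; by the absorbing property there is $t_0\le t$ with $S(t,s)\B(s)\subset\AA(t)$ for all $s\le t_0$. Suppose for contradiction that $\dist_{X_t}(S(t,s)\B(s),\omega_\AA(t))\not\to0$; then there exist $\eps>0$, a sequence $s_n\to-\infty$ and points $z_n\in S(t,s_n)\B(s_n)$ with $\dist(z_n,\omega_\AA(t))\ge\eps$. Writing $z_n=S(t,s_n)b_n$ with $b_n\in\B(s_n)$ and inserting an intermediate time $t_0$, we have $z_n=S(t,t_0)S(t_0,s_n)b_n$ with $S(t_0,s_n)b_n\in\AA(t_0)$ for $n$ large. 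Thus $z_n\in S(t,t_0)\AA(t_0)$, and I would use \eqref{alphaa} (with $t_0$ in place of $s$, letting the inner time go to $-\infty$) together with the process property (iii) to extract a convergent subsequence $z_n\to z$; the concrete characterization of $\omega_\AA(t)$ then forces $z\in\omega_\AA(t)$, contradicting $\dist(z_n,\omega_\AA(t))\ge\eps$.

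For invariance (i) I would argue in two inclusions. The inclusion $S(t,s)\omega_\AA(s)\subset\omega_\AA(t)$ follows directly from the concrete characterization together with continuity of $S(t,s)$ (property (ii) of a process) and the cocycle identity (iii): if $z\in\omega_\AA(s)$ is the limit of $S(s,s_n)z_n$ with $z_n\in\AA(s_n)$, then $S(t,s)z$ is the limit of $S(t,s_n)z_n$. The reverse inclusion $\omega_\AA(t)\subset S(t,s)\omega_\AA(s)$ is the more delicate one: given $w\in\omega_\AA(t)$, written as $\lim S(t,s_n)z_n=\lim S(t,s)S(s,s_n)z_n$, I would use that $S(s,s_n)z_n$ eventually lies in the absorber $\AA(s)$, apply \eqref{alphaa} at time $s$ to extract a subsequential limit $v\in\omega_\AA(s)$, and then pass to the limit using continuity of $S(t,s)$ to get $w=S(t,s)v$.

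The main obstacle is the reverse invariance inclusion: it requires simultaneously controlling compactness (to extract the subsequence converging to $v$) and the continuity of $S(t,s)$ along that subsequence, and one must be careful that the "eventually in $\AA(s)$" step is legitimate — this uses that the constant-in-the-far-past family $\{\AA(s_n)\}$ can itself be absorbed, i.e. that $\AA$ is an absorber for the process, not merely pullback-bounded. The time-dependence of the spaces $X_t$ does not actually introduce new difficulties here beyond bookkeeping, since all limits are taken in a single fixed space $X_t$ (or $X_s$), and the equivalence of the Kuratowski measures under Banach isomorphisms noted before the theorem is what guarantees the argument is consistent across times.
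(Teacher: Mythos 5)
Your proposal is correct and follows essentially the same route as the paper's proof of this theorem (which is not reproduced here but deferred to \cite{DDT}): nonemptiness and compactness of $\omega_\AA(t)$ from the vanishing Kuratowski measure via a Cantor-type argument, pullback attraction by contradiction using an intermediate absorption time, and invariance in two inclusions with the reverse one obtained by extracting a convergent subsequence at time $s$ and passing to the limit through the continuity of $S(t,s)$. The one point requiring care is that in the attraction and reverse-invariance steps a \emph{fixed} intermediate time $t_0$ does not suffice, since $S(t,t_0)\AA(t_0)$ need not be precompact; one must reroute through $\AA(\tau_k)$ for a sequence $\tau_k\to-\infty$ and diagonalize, both to extract the convergent subsequence and to recognize its limit as an element of $\omega_\AA(t)$ (rather than merely of $\omega_\B(t)$) via the concrete characterization --- your parenthetical ``letting the inner time go to $-\infty$'' indicates you have this in mind, so this is a matter of writing out the diagonal argument rather than a missing idea.
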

 We follow up the theorem with some important corollaries. In the first, we show that the construction of Theorem \ref{existence} always yields the unique   pullback-bounded global attractor.
\begin{corollary}\label{cor:uniqueness}
Under the assumptions of Theorem \ref{existence}, 
$$
\A(t) =\omega_\AA (t) \subset 	 \AA(t), \qquad \forall t \in \R;
$$ In particular, $\A$ is a   pullback-bounded family, and therefore
unique  in the sense of Remark \ref{uniqueness}.
\end{corollary}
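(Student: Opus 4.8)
The plan is to show the two inclusions $\omega_\AA(t)\subset\AA(t)$ and (implicitly, from Theorem \ref{existence}) $\A(t)=\omega_\AA(t)$, and then invoke Remark \ref{uniqueness}. The equality $\A(t)=\omega_\AA(t)$ is already granted by Theorem \ref{existence}, so the only substantive point is the inclusion $\omega_\AA(t)\subset\AA(t)$. Here I would use the concrete characterization of the $\omega$-limit given in the excerpt: a point $z\in\omega_\AA(t)$ arises as $z=\lim_{n\to\infty}S(t,s_n)z_n$ in $X_t$, with $s_n\to-\infty$ and $z_n\in\AA(s_n)$. Since $\AA$ is itself a pullback-bounded family, the absorbing property of $\AA$ applies to $\B=\AA$: for the given $t$ there is $t_0=t_0(t)\le t$ with $S(t,s)\AA(s)\subset\AA(t)$ for all $s\le t_0$. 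Because $s_n\to-\infty$, all but finitely many $s_n$ satisfy $s_n\le t_0$, so $S(t,s_n)z_n\in\AA(t)$ for $n$ large. Thus $z$ is a limit in $X_t$ of a sequence eventually lying in $\AA(t)$; since $\AA(t)$ is closed — it is a pullback-bounded family and, in the setting of Theorem \ref{existence}, $\omega_\AA(t)=\A(t)$ is compact, but more simply one takes the absorber $\AA$ to consist of closed sets, which is the standard convention and may be assumed without loss since replacing each $\AA(t)$ by its closure preserves all the stated properties — we conclude $z\in\AA(t)$. Hence $\A(t)=\omega_\AA(t)\subset\AA(t)$ for every $t\in\R$.

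It remains to observe that $\A$ is then a pullback-bounded family. For each fixed $t\in\R$ we have $\|\A(s)\|_{X_s}=\|\omega_\AA(s)\|_{X_s}\le\|\AA(s)\|_{X_s}$ for all $s$, and therefore
$$
\sup_{s\le t}\|\A(s)\|_{X_s}\le\sup_{s\le t}\|\AA(s)\|_{X_s}=R(t)<\infty,
$$
the last quantity being finite precisely because $\AA$ is pullback-bounded. So $\A$ inherits pullback-boundedness from $\AA$.

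Finally, $\A$ satisfies properties (i) and (ii) of a time-dependent global attractor by Theorem \ref{existence}, and it satisfies (iii) by the previous paragraph. Remark \ref{uniqueness} then gives that $\A$ is the unique pullback-bounded time-dependent global attractor, completing the proof.

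I expect the only mild obstacle to be the closedness of $\AA(t)$: the definition of pullback absorber in the excerpt does not explicitly stipulate closed sets, so the cleanest fix is to note at the outset that one may replace $\AA$ by the family of closures $\{\overline{\AA(t)}\}$ without affecting either the pullback-boundedness or the absorbing property or the validity of \eqref{alphaa} (the Kuratowski measure being unchanged under closure), and that $\omega_\AA$ is insensitive to this replacement. With that harmless normalization in place the argument above is completely routine.
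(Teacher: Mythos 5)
Your argument is correct and follows the same route as the paper: the concrete characterization of $\omega_\AA(t)$ plus the fact that the pullback-bounded family $\AA$ absorbs itself gives $S(t,s_n)z_n\in\AA(t)$ for large $n$. The one point where you diverge is exactly the obstacle you flagged, the possible non-closedness of $\AA(t)$. You resolve it by replacing $\AA$ with its family of closures; this is legitimate (the norm, the absorbing property, \eqref{alphaa} and $\omega_\AA$ all survive the replacement, the last of these via a small continuity argument you should spell out), but it only yields $\A(t)\subset\overline{\AA(t)}$ for the original absorber. The paper avoids the normalization entirely: it first concludes only $z\in\overline{\AA(t)}$, which already gives that $\A$ is pullback-bounded; then, since $\A$ is pullback-bounded, it is absorbed by $\AA$ itself, so $S(t,s)\A(s)\subset\AA(t)$ for $s$ sufficiently negative, and the invariance $S(t,s)\A(s)=\A(t)$ upgrades this to the genuine inclusion $\A(t)\subset\AA(t)$ as literally stated. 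That second use of absorption combined with invariance is the one idea your proposal is missing; for the downstream conclusions (pullback-boundedness and uniqueness via Remark \ref{uniqueness}) your version is entirely adequate.
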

\begin{proof}
Let $t \in \R$, $z \in \A(t)$ be fixed. By definition of an $\omega$-limit family, there exist $s_n \to -\infty, z_n \in \AA(s_n)$ such that $\|
z - S(t,s_n) z_n
\|_{X_t} \to 0$ as $n \to \infty$. But $\AA$ is a pullback-bounded family, and therefore absorbs itself: $S(t,s)\AA(s)\subset \AA(t)$ for every $s \geq s_\star(t)$. Hence $S(t,s_n)z_n \in \AA(t)$ for  $n$ large enough, so that {$z  \in \overline{\AA(t)}$. This, in particular, implies that the family $\{\A(t)\}$ is pullback-bounded and thus absorbed by $\AA$, i.e $S(t,s(t))\A(t) \subset \AA(t)$ for some $s(t) \leq t$. The stronger inclusion $\A(t) \subset 	 \AA(t)$ then follows again by the invariance of $\A$.}
\end{proof}

The second corollary is a concrete reformulation of
Theorem~\ref{existence}, proven in \cite{DDT}, and completed with the uniqueness result of Corollary \ref{cor:uniqueness}.
\begin{corollary}\label{cor:attractor}If the process $S(\cdot,\cdot)$ with absorber $\AA$ possesses a decomposition
$$
S(t,s) \AA(s) = \mathfrak{P}(t,s) + \mathfrak{N}(t,s)
$$
where
$$
\lim_{s \to -\infty } \|\mathfrak{P}(t,s) \|_{X_t} =0, \qquad \forall t \in \R,
$$
and $\mathfrak{N}(t,s)$ is a compact subset of $X_t$ for all $t \in \R$ and $s \leq t$,
then  $\A(t)=\omega_\AA(t)$ is the unique (in the sense of Remark \ref{uniqueness}) global attractor for the process $S(\cdot, \cdot)$ .
\end{corollary}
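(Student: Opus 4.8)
The plan is to read the statement off from Theorem \ref{existence} together with Corollary \ref{cor:uniqueness}: the hypotheses of the corollary are tailored precisely so that the only non‑trivial point, the compactness condition \eqref{alphaa}, is automatic. First I would recall the elementary properties of the Kuratowski measure of noncompactness $\alpha_t$ on $X_t$ that are needed: monotonicity ($A\subset B\Rightarrow\alpha_t(A)\le\alpha_t(B)$); subadditivity with respect to Minkowski sums ($\alpha_t(A+B)\le\alpha_t(A)+\alpha_t(B)$, obtained by adding centers of two finite covers); the bound $\alpha_t(A)\le\|A\|_{X_t}$ for bounded $A$ (the single $X_t$‑ball of radius $\|A\|_{X_t}$ centered at the origin covers $A$); and the vanishing $\alpha_t(A)=0$ whenever $A$ is relatively compact.

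Next I would fix $t\in\R$. Since $\AA$ is an absorber it is in particular pullback‑bounded; moreover $\|\mathfrak{P}(t,s)\|_{X_t}\to0$ and each $\mathfrak{N}(t,s)$ is compact, hence bounded, so both $\mathfrak{P}(t,s)$ and $\mathfrak{N}(t,s)$ are bounded subsets of $X_t$ once $s$ is sufficiently negative. For such $s$, the decomposition $S(t,s)\AA(s)=\mathfrak{P}(t,s)+\mathfrak{N}(t,s)$ together with the four properties above yields
$$
\alpha_t(S(t,s)\AA(s))\le \alpha_t(\mathfrak{P}(t,s))+\alpha_t(\mathfrak{N}(t,s))\le \|\mathfrak{P}(t,s)\|_{X_t}.
$$
Letting $s\to-\infty$ and invoking the hypothesis $\|\mathfrak{P}(t,s)\|_{X_t}\to0$ gives $\lim_{s\to-\infty}\alpha_t(S(t,s)\AA(s))=0$, which is exactly \eqref{alphaa}.

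Having verified \eqref{alphaa}, Theorem \ref{existence} applies and shows that $\A=\omega_\AA$ is a global attractor for $S(\cdot,\cdot)$; Corollary \ref{cor:uniqueness} then gives $\A(t)=\omega_\AA(t)\subset\AA(t)$, so that $\A$ is a pullback‑bounded family and therefore, by Remark \ref{uniqueness}, the unique global attractor in the class of pullback‑bounded families. I do not expect a genuine obstacle here: the mathematical substance already resides in Theorem \ref{existence} and Corollary \ref{cor:uniqueness}, and this corollary merely trades the abstract hypothesis \eqref{alphaa} for the easily checkable ``small‑plus‑compact'' splitting of $S(t,s)\AA(s)$. The single point meriting a word of care is the bookkeeping that the two pieces of the decomposition are bounded for $s$ negative enough, so that the subadditivity and covering estimates for $\alpha_t$ are legitimately applicable.
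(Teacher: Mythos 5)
Your proposal is correct and follows essentially the same route as the paper, which presents this corollary as a concrete reformulation of Theorem \ref{existence} (verifying \eqref{alphaa} via the ``small plus compact'' splitting and the standard properties of the Kuratowski measure) completed by the uniqueness statement of Corollary \ref{cor:uniqueness}. The subadditivity and norm bounds for $\alpha_t$ that you invoke are exactly the ingredients used in the reference \cite{DDT} to which the paper defers for the details.
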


Finally, we dwell on further regularity properties of the pullback global attractor.
\begin{corollary}\label{cor:regularity}
Let $Y_t$ be a further family of Banach spaces satisfying, for
every $t \in \R$,\footnote{With $Y\Subset X$ we indicate compact
injection of the Banach space $Y$ into the Banach space $X$.}
\begin{itemize}
\item[$\cdot$] $Y_t \Subset X_t$;
\item[$\cdot$]  closed balls of $Y_t$ are closed$\;$\footnote{For example, this holds when $Y_t$ is reflexive and compactly embedded into $X_t$.} in $X_t$ .
\end{itemize}
Under the same assumptions as in  Corollary~\ref{cor:attractor},
if in addition
$$
\sup_{s \in (-\infty,t]} \|N(t,s)\|_{Y_t} = h(t) < \infty \qquad \forall t \in \R,
$$
then the global attractor  satisfies
$$\|\A(t)\|_{Y_t} \leq h(t)\qquad \forall t \in \R.$$
\end{corollary}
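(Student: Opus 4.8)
The plan is to transfer the $Y_t$-bound from the compact part $\mathfrak{N}(t,s)$ of the decomposition to the attractor $\A(t)=\omega_\AA(t)$ by exploiting the concrete characterization of the $\omega$-limit together with the weak-type closedness of $Y_t$-balls in $X_t$. Fix $t \in \R$ and let $z \in \A(t)$. By the characterization of $\omega_\AA(t)$ recalled in Section~\ref{sect:attractors}, there exist $s_n \to -\infty$ and $z_n \in \AA(s_n)$ with $\|S(t,s_n)z_n - z\|_{X_t} \to 0$. Using the decomposition $S(t,s_n)\AA(s_n) = \mathfrak{P}(t,s_n)+\mathfrak{N}(t,s_n)$, write $S(t,s_n)z_n = p_n + q_n$ with $p_n \in \mathfrak{P}(t,s_n)$ and $q_n \in \mathfrak{N}(t,s_n)$.

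Next I would dispose of the $p_n$ term: since $\|\mathfrak{P}(t,s_n)\|_{X_t} \to 0$ as $s_n \to -\infty$, we have $\|p_n\|_{X_t} \to 0$, hence $\|q_n - z\|_{X_t} \le \|p_n\|_{X_t} + \|S(t,s_n)z_n - z\|_{X_t} \to 0$; that is, $q_n \to z$ in $X_t$. On the other hand, by hypothesis $\|q_n\|_{Y_t} \le \|\mathfrak{N}(t,s_n)\|_{Y_t} \le h(t)$ for every $n$, so $(q_n)$ lies in the closed ball $\overline{B}_{Y_t}(0,h(t))$. By the second assumption on $Y_t$, this ball is closed in $X_t$; since $q_n \to z$ in $X_t$ and $q_n \in \overline{B}_{Y_t}(0,h(t))$, the limit $z$ belongs to $\overline{B}_{Y_t}(0,h(t))$ as well, i.e.\ $\|z\|_{Y_t} \le h(t)$. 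As $z \in \A(t)$ was arbitrary, $\|\A(t)\|_{Y_t} \le h(t)$, which is the claim.

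The only genuinely delicate point is the last step — passing from the uniform $Y_t$-bound on the sequence $(q_n)$ to a $Y_t$-bound on its $X_t$-limit $z$ — and this is precisely what the closedness-of-balls hypothesis is designed to handle; the footnote observation that this holds when $Y_t$ is reflexive and compactly embedded in $X_t$ is the mechanism one has in mind (a bounded sequence in $Y_t$ has a weakly convergent subsequence, whose weak limit in $Y_t$ must coincide with the strong $X_t$-limit $z$ by uniqueness of limits in the compact/continuous embedding, and the $Y_t$-norm is weakly lower semicontinuous). Everything else is a routine splitting of the $\omega$-limit sequence through the given decomposition and does not require the absorber structure beyond what Corollary~\ref{cor:attractor} already guarantees, namely that $\A(t)=\omega_\AA(t)$ is the attractor.
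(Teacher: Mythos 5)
Your argument is correct and is exactly the intended one (the paper defers the proof to \cite{DDT}, where the same reasoning is used): represent $z\in\A(t)=\omega_\AA(t)$ as an $X_t$-limit of $S(t,s_n)z_n$, split through the decomposition of Corollary~\ref{cor:attractor}, kill the $\mathfrak{P}$-part, and use the closedness in $X_t$ of the closed $Y_t$-ball of radius $h(t)$ to conclude $z\in Y_t$ with $\|z\|_{Y_t}\leq h(t)$. No gaps; the remark on weak compactness correctly identifies why the closed-ball hypothesis holds in the reflexive case.
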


\section{The 3D oscillon equation with a general potential}
In this section, we   state the main assumptions and then  cast  the evolution problem associated with \eqref{SYS-INTRO} in the abstract framework of processes in time-dependent spaces as described in Section 2.

 \subsection*{Notation}
Let $\Omega$ be a smooth bounded domain in $\R^3$.
In the following, $|\cdot|$ and $\l \cdot,\cdot\r$ denote
respectively the standard norm and scalar product on $L^2(\Omega)$;
$A$ denotes  $-\Delta$ on $\Omega$ with Dirichlet boundary
conditions, with domain ${\D}(A)=H^2(\Omega)\cap H_0^1(\Omega)$.
For $\ell\in\R$, we define the scale of Hilbert spaces
$H^\ell={\D}(A^{\ell/2})$, endowed with the standard
inner product and norm $$\l u,v\r_{\ell}=\l A^{\ell/2}u,A^{\ell/2}v\r,\qquad |u|_\ell=|A^{\ell/2}u|.$$  
The symbols $c$ and $\Q$ will stand respectively for a generic
positive constant and a generic positive increasing continuous
function; both  may be different in different occurrences. When an
index is added, (e.g.\ $c_0, \Q_0$), the positive constant (resp.\
function) is meant to be specific and will be referred to subsequently. Similarly, the symbols
$\Lambda,\Lambda_\imath$ will denote certain energy-like
functionals occurring in the proofs.  
\subsection{Definition of the problem and assumptions on the nonlinearity}  \label{Sect3subnl}
 \vskip2mm
 We study the \emph{oscillon
equation} in space dimension $n=3$ with Dirichlet boundary
conditions
\begin{equation}
\label{SYS} \tag{P}
\begin{cases}
\displaystyle
\ptt u(t)+\omega(t) \pt u(t) + \mu(t) A u(t) + \varphi(u(t))=0, & t \geq s,\\ \\
u(s)= u_0 \in H^1, \pt u(s) = v_0  \in H.
\end{cases}
\end{equation}
We consider a (nonlinear) potential $V \in \C^3(\R)$,
such that $V(0)=0$, and $\varphi=V'$  satisfies the following
assumptions:
\begin{itemize}
\item[(H0)] $\varphi(0)=0$;
\item[(H1)] there exist $a_0,a_2> 0$, $a_1,a_3\geq0$, $ q \in [2,4]$ such that
$$ a_0|y|^{q-2} -a_1 \leq \varphi'(y) \leq a_2 |y|^{q-2}+a_3.
$$
When $q=2$ (sublinear case), we assume $a_0 > a_1$.\end{itemize}

The case $q=4$ is 
\emph{critical} for well-posedness of \eqref{SYS} (as well as for an autonomous damped wave equation in space dimension three) in the weak sense.\footnote{By this, we mean that for nonlinearities $V$ growing faster than a polynomial of order $4$ the uniqueness  of \emph{weak solutions} to \eqref{SYS}, i.e. $$ u \in C\big([s, T], H^1_0(\Omega)\big), \; \pt u \in L^2\big([s,T],L^2(\Omega)\big), \; \ptt u \in L^2\big([s,T],H^{-1}(\Omega)\big), \qquad \forall T \geq s,$$ is not guaranteed.
}

Since $V(0)=\varphi(0)=0$, two consecutive integrations
of (H1) yield
\begin{equation} \label{bound:V1}
\textstyle  \frac{a_0}{q(q-1)}|y|^{q} -\frac{a_1}{2} y^2 \leq V(y) \leq \textstyle  \frac{a_2}{q(q-1)}|y|^{q} +\frac{a_3}{2} y^2.
\end{equation}
Moreover, integrating by parts and using (H0)-(H1), we have 
\begin{equation} \label{fritto} y\varphi(y) \geq V(y) +\frac{a_0}{q} |y|^q - \frac{a_1}{2}y^2 \geq V(y) - c_0,
\end{equation}
for some $c_0\geq 0$ depending only on $a_0,a_1,q$. In particular,
we can take $c_0=0$ whenever $a_1=0$.
 We set also $$ \V(u) =\int_\Omega V(u(x))\, \d x. $$ In view of \eqref{bound:V1},
$\V(u) $ is well defined for every $u \in L^q(0,1)$, and \begin{equation}
\label{bound:V} b_0(\| u\|_{L^{q}}^{q}+|u|^2)-b_1 \leq \V (u) \leq
b_2(\| u\|_{L^{q}}^{q}+|u|^2),
\end{equation}
with $b_0,b_2>0$ and $b_1\geq 0$ depending only on the
$a_\imath$ ($\imath=0,\ldots,3$) and $q$;
in particular, $b_1=0$ whenever $a_1=0$.
\begin{remark}
We point out that the potential corresponding to 
$
\varphi(y) = y^3-y, 
$
(i.e.\ the well-known  $\phi^4$ extension of Klein-Gordon theory) falls into the scope of our assumptions (H0)-(H1), and of assumption (H2), which will be stated below in Section 4.
\end{remark}

\subsection{Assumptions on the time-dependent terms}  \label{Sect3subtd} We now specify the hypotheses we make on $\omega$ and $\mu$. See Remark \ref{physcond} below for the specific form of $\omega(t)$ and $\mu(t)$ in the case of an expanding universe. 

\subsubsection*{Assumptions on $\omega$}The damping coefficient $\omega: \R \to \R^+$ is assumed to be a decreasing strictly positive  differentiable function, with $\omega(t)$   bounded as $t \to -\infty$ (and thus on all of $\R$), and we set
$$
W:=\sup_{t \in \R} \omega(t)  < \infty.
$$
 Observe that the degeneracy $\lim_{t \to +\infty} \omega(t)=0$ is allowed. We associate with $\omega$ the \emph{decay rate} $\eps_\omega: \R \to \R^+$, defined as the function
\begin{equation} \label{epsom}
\eps_\omega(t) = \textstyle \frac{1}{16} \min\left\{1,\omega(t),\frac{c_1}{(1+W)}  \right\},  
\end{equation}
where $c_1>0$ is a positive constant depending only on $V$ as specified later. 
\subsubsection*{Assumptions on $\mu$} The main structural assumption on $\mu$ is as follows: $\mu(t)>0$ for all $t \in \R$, and there exists a   function $\alpha: \R\to[0,\infty) $, such that
\begin{equation} \label{assmu} \tag{M1}
   \mu'(t) \leq 2\alpha( t) \mu(t), \quad\textrm{with} \quad \sup_{s \leq t} \alpha(s) \leq \eps_\omega(t), \qquad \forall t \in \R.
\end{equation}
See Remarks  \ref{murmk1}, \ref{murmk1.5},  \ref{physcond} and \ref{rmkmu2} below for examples.
\begin{remark}\label{murmk1}
Any positive decreasing function $\mu$ satisfies \eqref{assmu} with $\alpha=0$, independently of how the positive function $\omega$ is chosen.
\end{remark}
\begin{remark} \label{murmk1.5} 
In the   case of constant damping $\omega(t)\equiv \omega >0$, $\eps_{\omega}$ is independent of $t$, and with $\alpha=\eps_\omega$ assumption \eqref{assmu} is equivalent to
\begin{equation} \label{assmu2}
\mu(t_2)  \leq \mu(t_1) \e^{2\eps_\omega (t_2-t_1)}, \qquad \forall t_1 \in \R, t_2 \geq t_1;
\end{equation}
that is,    $\e^{-2\eps_{\omega} t}\mu(t)$ is a decreasing function on $\R$. 
The case of constant damping $\omega$ and not necessarily decreasing $\mu$ is relevant in the study of the autonomous damped wave equation in a time-dependent domain, for instance, $\Omega_t=[0,a(t)]^3$. A rescaling produces the nonautonomous problem \eqref{SYS} on the \emph{fixed} domain $\Omega=[0,1]^3$,  with $\mu=\frac{1}{a^2}$. \end{remark}

\begin{remark} \label{physcond}We explain how   the time-dependency in \eqref{SYS} described by E.\ Fahri et al.\ in \cite{FGGIRS} for an expanding universe fits into our framework. If $a=a(t)$ denotes the rate of expansion of the universe, the physical model prescribes
$$
\mu(t) = \frac{1}{a(t)^2}, \qquad \omega(t)= \frac{a'(t)}{a(t)} = -\frac{1}{2} \ddt \left( \frac{1}{a(t)^2} \right) a(t)^2 = -\frac{1}{2} \frac{\mu'(t)}{\mu(t)}.
$$
Therefore, to ensure that the damping $\omega$ is a strictly positive function,  we have to require $\mu$  to be a decreasing function, which is the same as requiring the rate of expansion $a=a(t)$ to be a strictly increasing function, in agreement with the idea of an expanding universe. Hence, (M1) holds true with $\alpha\equiv 0$.

Moreover, we have to require that $\omega$ is decreasing, i.e., referring to the above form of $\omega$,
$$
\frac{\mu'(t)}{\mu(t)} \geq    \frac{\mu'(s)}{\mu(s)}, \qquad \forall t \geq s.
$$ If we set $\varpi(t)=\log(\mu(t))$, this can be rewritten as 
$
\varpi'(t) \leq \varpi'(s)
$,
for each $t \geq s$, i.e. $\varpi'$ is an increasing function, i.e. $\varpi=\log \mu$ is a convex function. But this is the same as saying that $a=\e^{-\varpi/2}$ is logarithmically concave (not necessarily logarithmically \emph{strictly} concave).
 Summarizing,  the assumptions  on the expansion rate $a(t)$ that are  needed to fit the described  physical case into our analysis are
\begin{align} \label{graham-hyp}
 a'(t) > 0 \; \textrm{ a.e. } t \in \R \qquad  &\big( \equiv \mu'(t) < 0 \; \textrm{ a.e. } t \in \R \big) \\
 \log a \textrm{ is a concave function},  \qquad&  \big( \equiv \log \mu \textrm{ is a convex function} \big) 
\end{align}
The article \cite{DDT} focuses on the most common case, where $a'/a=H>0$, so that  $a(t)=\e^{Ht},$ and $\log a(t)=Ht$ is concave (not strictly concave, but this is enough). Hence $a(t)$ is increasing and logarithmically concave, and the assumptions are satisfied.
Moreover, in \cite{FGGIRS} the authors cite as interesting the case of  (rapidly) decreasing $\omega$ (reheating phase of inflation), which fits the assumptions above as well ($a$ will be a logarithmically strictly concave function in that case).
For instance, the following is an example of a strictly increasing logarithmically concave function which is furthermore logarithmically \emph{strictly} concave for $t>0$:
$$
a(t)= \begin{cases} \e^{t+2} &  t\leq 0 \\ {\exp({2\sqrt{t+1}})} &  t >0 \end{cases} \leadsto a'(t)= \begin{cases} \e^{t+2} &  t\leq 0 \\ \frac{\exp({2\sqrt{t+1}})}{\sqrt{t+1}} & t >0 \end{cases} \leadsto \omega(t)= \begin{cases} 1  &  t\leq 0 \\ \frac{1}{\sqrt{t+1}} & t >0. \end{cases}
$$
\end{remark}

\begin{remark} \label{rmkmu2}
Regarding our assumptions on the damping $\omega$, which we recall is required  to be positive decreasing and bounded at $-\infty$, two significant examples are \begin{itemize}
  \item[$\cdot$] constant damping: $\omega(t)\equiv W >0$;
  \item[$\cdot$]  damping vanishing at $+\infty$: e.g $\omega_{\textrm{van}}(t)= \frac{W}{1+\e^{t}}.$ 
\end{itemize}
\noindent
Observe that if the damping is of the form $\omega_{\textrm{van}}$, a sufficient condition for our assumption  (M1) to hold is  
$$
\mu'(t) \leq c \min\{1,\e^{-t}\}\mu(t), \qquad \forall t \in \R,
$$
where $c $ is a constant that can be explicitly computed and depends on $W$ and $c_1$. Indeed, a suitable choice of the function $\alpha$ in this case is given by $\alpha(t)=c\min\{1,\e^{-t}\}$, with $c>0$ small enough.
\end{remark}

\subsection{The functional setting} \label{Sect3subfs}
We rewrite Problem \eqref{SYS} in our abstract framework. For $t,\ell \in \R$, we introduce the Banach spaces
\begin{equation} \label{spaces}
X_t^\ell = H^{1+\ell} \times H^\ell \quad \textrm{with norms} \quad \|(u,v) \|_{X_t^\ell}= \mu(t)^{1/2}|u |_{1+\ell} + \| u\|_{L^{q}} + |v |_\ell.
\end{equation}
For simplicity, we set  $X^\ell= X_0^\ell$. Likewise, the index $\ell$ is omitted when $\ell=0$, that is $X_t=X_t^0$ and $X=X_0^0$.

For some of the proofs below, it will be convenient to use    the natural energy of the problem at time $t$
\begin{equation} \label{energy}
\E_{X_t^\ell}(u,v) =  \mu(t)|u |_{1+\ell}^2 + \frac{2}{q}\|
u\|_{L^{q}}^{q} + |u|_\ell^2+ |v |_\ell^2,
\end{equation}
in place of the $X_t^\ell$-norm. Indeed, from the elementary relations
$$
a^2 + b^q  \leq a^2+b^2 + (a^2+b^2)^{\frac q2}, \qquad a^2 + b^2 \leq a^2+b^q + (a^2+b^q)^{\frac 2q}, 
$$
we see that
\begin{equation} \label{energynosense}
\E_{X_t^\ell}(z) \leq \|z\|_{X^\ell_t}^2 +  \|z\|_{X^\ell_t}^q, \qquad   \|z\|_{X^\ell_t}^2 \leq \E_{X_t^\ell}(z) + \E_{X_t^\ell}(z)^{\frac2q}.
\end{equation}
Hence, the energy $\E_{X_t^\ell}(\cdot)$ is equivalent to the norm $  \|\cdot \|_{X_t^\ell}$, in the following sense:
\begin{itemize}
\item[$\cdot$]  a family $\B=\{\B(t) \subset X_t^\ell\}$ is pullback-bounded
if and only if $$\displaystyle \sup_{s\in (-\infty, t]} \sup_{z
\in \B(s)}\E_{X_s^\ell}(z)< \infty \qquad \forall t \in \R;$$
\item[$\cdot$] a sequence $\{z_n\} \subset X_t^\ell$ converges to $z \in X_t^\ell$ if and only if $\E_{X_t^\ell}(z_n-z)$ converges to zero.
\end{itemize}
In accordance with the above notation we write $\E_{X_t}$ when $\ell=0$.
\section{Main results}
This section contains the main results of the article. Unless otherwise specified, the assumptions of Subsections \ref{Sect3subnl},  \ref{Sect3subtd} and \ref{Sect3subfs} are standing. Additional assumptions will be specified as needed.
\subsection{Well-posedness and dissipativity.}
Our first theorem is a well-posedness result in the base spaces $X_t$. This theorem will also clarify the role of the decay rate $\eps_\omega$ introduced in \eqref{epsom}.
\begin{theorem}    Problem \eqref{SYS} generates a strongly continuous process
$S(t,s): X_s \to X_t$,  
with the following   continuous dependence
property: for every pair of initial conditions
$z^\imath  \in X$ $(\imath=1,2)$ with
$\E_{X_s}(z^\imath) \leq R$ and every $t \geq s$, we have
\begin{equation}
\label{continuous:dep} \E_{X_t}\big[S(t,s)z^1-S(t,s)z^2\big] \leq
 \exp\left(\Q_1(R) \big((t-s)+ \textstyle\int_{s}
^t \frac{1}{\mu(\tau)} \, \d \tau \big)\right)\E_{X_s}\big[z^1-z^2\big] . \end{equation} 
Moreover, there exists $R_\AA =R_\AA(\omega,a_\imath)>0$ such that
the family 
\begin{equation} \label{radius}
\AA=\big\{\AA(t)= \{z \in X_t : \E_{X_t}(z) \leq
R_{\AA} \}\big\}
\end{equation} is an absorber for the process
$S(\cdot,\cdot).$
The dependence of $R_\AA$ and $\Q_1$ on the physical parameters of the problem is specified in the proof. \label{wp:th}
\end{theorem}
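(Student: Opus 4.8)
The plan is to follow the classical energy-method approach for damped wave equations, carefully tracking how the time-dependent coefficients $\omega(t)$ and $\mu(t)$ enter the estimates, and exploiting assumption \eqref{assmu} at every step where $\mu'$ appears. I split the proof into four parts: (a) local well-posedness via Galerkin approximation, (b) a priori estimates showing solutions are global and yielding the absorbing set, (c) the continuous-dependence estimate \eqref{continuous:dep}, and (d) strong continuity of the process in the time-dependent norms.

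\emph{Existence and uniqueness.} For part (a), I would set up a Faedo–Galerkin scheme using the eigenbasis of $A$. On finite-dimensional subspaces the system is a locally Lipschitz ODE (using $V\in\C^3$, hence $\varphi$ locally Lipschitz), so local solutions exist. The uniform a priori bounds from part (b) then give global existence of the approximants, and standard compactness (Aubin–Lions, using $\pt u$ bounded in $L^2(H)$ and $\ptt u$ bounded in $L^2(H^{-1})$) lets me pass to the limit; the critical growth $q=4$ is exactly what keeps $\varphi(u)$ in $L^2(H^{-1})$ via the Sobolev embedding $H^1(\Omega)\hookrightarrow L^6(\Omega)$, so $\varphi(u)\sim u^3 \in L^2(\Omega)$ is borderline but admissible. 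Uniqueness of weak solutions at the critical exponent follows from the continuous-dependence estimate in part (c), which is where the real work lies.

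\emph{A priori estimates and the absorber.} For part (b), the key is the right energy functional. I would introduce a perturbed energy of the form
$$
\Lambda(t) = \E_{X_t}(u(t),\pt u(t)) + 2\V(u(t)) - \frac{2}{q}\|u(t)\|_{L^q}^q + 2\eps_\omega(t)\,\langle u(t),\pt u(t)\rangle,
$$
i.e.\ the natural energy $\mu(t)|u|_1^2 + |\pt u|^2 + 2\V(u)$ augmented by a small cross term $\eps_\omega\langle u,\pt u\rangle$ that produces coercivity in $|u|_1^2$ after differentiation. Differentiating along solutions, the term $\mu'(t)|u|_1^2$ is controlled by $2\alpha(t)\mu(t)|u|_1^2 \le 2\eps_\omega(t)\mu(t)|u|_1^2$ thanks to \eqref{assmu}; the damping $-\omega(t)|\pt u|^2$ absorbs the cross-term's contribution $\eps_\omega\langle u, \ptt u\rangle \sim -\eps_\omega\mu|u|_1^2 - \eps_\omega\omega\langle u,\pt u\rangle + \ldots$; the dissipativity estimate \eqref{fritto} handles the potential terms $\langle\varphi(u), u\rangle \ge \V(u) - c_0$. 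Choosing the constant $c_1$ in \eqref{epsom} small enough (depending on $V$ through $a_0,a_1,q$ and on the Poincaré constant) makes $\eps_\omega$ small enough that one arrives at a differential inequality $\ddt \Lambda + \eps_\omega(t)\Lambda \le C$, with $\Lambda$ equivalent to $\E_{X_t}$ up to additive constants from \eqref{bound:V}. Since $\omega$ is bounded by $W$, $\eps_\omega(t)\ge \frac1{16}\min\{1,\omega(t),c_1/(1+W)\}$; pulling back from $s\to-\infty$ and using $\int_s^t \eps_\omega$ — which diverges because $\eps_\omega(t)$ is bounded below on any interval $(-\infty,t]$ by the monotonicity of $\omega$ — yields $\limsup_{s\to-\infty}\E_{X_t}(S(t,s)z) \le R_\AA$ uniformly over pullback-bounded families, giving the absorber \eqref{radius}. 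The $R_\AA$ depends on $W$, $c_1$, and the $a_\imath, q$ only, as claimed.

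\emph{Continuous dependence and strong continuity.} For part (c), let $w = u^1 - u^2$ be the difference of two solutions; it solves $\ptt w + \omega\pt w + \mu A w + \varphi(u^1)-\varphi(u^2) = 0$. The only subtle term is $\varphi(u^1)-\varphi(u^2) = \varphi'(\xi)w$ tested against $\pt w$: by (H1), $|\varphi'(\xi)|\le a_2|\xi|^{q-2}+a_3$, and with $q\le 4$, $|\xi|^{q-2}\in L^{6/(q-2)}(\Omega)$ with $6/(q-2)\ge 3$, so Hölder with exponents ensures $|\langle\varphi'(\xi)w,\pt w\rangle| \le \Q(R)(|w|_1^2 + |\pt w|^2) \le \Q(R)(\mu(t)^{-1}+1)\E_{X_t}(w)$, using $|w|_1^2 \le \mu(t)^{-1}\cdot\mu(t)|w|_1^2$ to produce the $1/\mu(\tau)$ weight. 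Then Grönwall on $\E_{X_t}(w(t))$ with the coefficient $\Q_1(R)(1 + 1/\mu(t))$ gives \eqref{continuous:dep}; note $\Q_1$ depends on $R$ through the Sobolev constants and the $a_\imath$. Finally, for part (d), strong continuity $S(\cdot,\cdot)$ as a map into $X_t$ — including continuity in the $\mu(t)^{1/2}|u|_1$ component despite $\mu$ varying — follows from the a priori bounds (weak continuity plus energy convergence, using the equivalence of $\E_{X_t}$ with the norm stated in \eqref{energynosense}), together with continuity of $t\mapsto\mu(t)$.

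\emph{Main obstacle.} The hard part is the interplay in part (b): verifying that the cross-term coefficient $\eps_\omega$ can simultaneously (i) be absorbed by the damping $\omega$, (ii) dominate the $\mu'/\mu$ growth via \eqref{assmu}, and (iii) be small enough relative to the potential's dissipativity constants from \eqref{fritto}–\eqref{bound:V} — all while keeping the energy functional $\Lambda$ coercive. This is precisely why $\eps_\omega$ is defined as the particular minimum in \eqref{epsom}, and pinning down the constant $c_1$ is the delicate bookkeeping of the proof. A secondary technical point is the critical exponent $q=4$ in the uniqueness argument, where the Hölder exponents are exactly saturated and no room is left.
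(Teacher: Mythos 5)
Your proposal follows essentially the same route as the paper: Galerkin existence, a Lyapunov functional obtained by adding a small multiple of the $\langle u,\pt u\rangle$ cross term to the natural energy, absorption of $\mu'|u|_1^2$ via \eqref{assmu}, and a Gr\"onwall argument for \eqref{continuous:dep} in which $|w|_1^2\le\mu(t)^{-1}\cdot\mu(t)|w|_1^2$ produces the $\int_s^t\mu(\tau)^{-1}\,\d\tau$ weight. The one place you deviate is in taking the cross-term coefficient to be the time-dependent $\eps_\omega(t)$: differentiating your $\Lambda$ then produces an extra term $2\eps_\omega'(t)\langle u,\pt u\rangle$ of indefinite sign, and $\omega'$ (hence $\eps_\omega'$) is not assumed bounded, so this term is not controlled; the paper avoids this by freezing $\eps=\eps_\omega(t_0)$ at the final time $t_0$ on the whole interval $[s,t_0]$, which still suffices since (M1) gives $\alpha(\tau)\le\eps_\omega(t_0)$ for all $\tau\le t_0$ and $\eps_\omega$ is decreasing. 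With that standard repair your argument is the paper's proof.
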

The proof of Theorem \ref{wp:th} is given in Section \ref{proof:wp}.
\subsection{Existence of the global attractor}
 We first obtain the existence and uniqueness of the attractor without any additional smoothness.  Smoothness questions will be addressed in a subsequent result, imposing further assumptions on  $\mu$.

In the critical case $q=4$, we require a slight strengthening of the assumptions on the nonlinear term. 
Following \cite{GP}, we ask for the existence of a splitting
 $\varphi=\phi + \psi$, with $\phi,\psi \in \C^2(\R)$, and, for some $2<\gamma <4$, \begin{align} \label{H2.a} & \phi'(y) \geq \tilde{a}_0|y|^2, \quad |\phi''(y)| \leq c(1+|y|); \tag{H2.a} \\ \tag{H2.b} & |\psi'(y)| \leq \tilde{c} (1+|y|^{\gamma-2}) \label{H2.b}
\end{align}
\begin{remark} For polynomial-type nonlinearities $\varphi$ fulfilling (H0)-(H1), the existence of a decomposition of the type (H2.a)-(H2.b) is achieved by choosing $\phi$ to be  the leading term in $\varphi$.
\end{remark}

\begin{theorem}
 \label{osc:attractor}  In addition to the hypotheses (H0)-(H1) and (M1) of Section 3, assume also, when $q=4$, that (H2) holds. Then the family $\A(t)=\omega_\AA(t)$ is the unique (in the sense of Remark~\ref{uniqueness}) global attractor
of the process $S(\cdot, \cdot)$ generated by (P). 
\end{theorem}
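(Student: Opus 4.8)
The plan is to verify the hypotheses of Corollary~\ref{cor:attractor}, which requires producing, for the absorber $\AA$ constructed in Theorem~\ref{wp:th}, a decomposition $S(t,s)\AA(s)=\mathfrak{P}(t,s)+\mathfrak{N}(t,s)$ with $\|\mathfrak{P}(t,s)\|_{X_t}\to 0$ as $s\to-\infty$ and $\mathfrak{N}(t,s)$ precompact in $X_t$. The natural way to obtain such a splitting is to decompose the solution itself: writing $z(\tau)=S(\tau,s)z_0$ for $z_0\in\AA(s)$, split $z=w+\zeta$ where $w=(w,\pt w)$ solves the linear (or ``bad'') part of the equation with the same initial data and $\zeta=(\zeta,\pt\zeta)$ solves the remaining (``good'', smoothing) part with zero initial data. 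Concretely, $w$ should solve $\ptt w+\omega\pt w+\mu A w + \phi_{\mathrm{lin}}(w)=0$ capturing the non-compact transport, for which one proves an exponential-decay energy estimate $\E_{X_t}(w(t))\le \Q(R_\AA)\E_{X_s}(z_0)\,e^{-\int_s^t \text{(something positive)}}$, using the dissipativity mechanism already exploited in Theorem~\ref{wp:th} (the decay rate $\eps_\omega$ and assumption (M1) are exactly what make the energy functional decrease). Since $\omega$ may vanish at $+\infty$ but is bounded below on each $(-\infty,t]$, and since $\int_s^t\eps_\omega$-type quantities diverge as $s\to-\infty$ for fixed $t$, this gives $\|\mathfrak{P}(t,s)\|_{X_t}=\|w(t)\|_{X_t}\to 0$.

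For the compact part $\zeta$, the goal is a uniform bound in a smoother space $X_t^\ell$ for some $\ell>0$ (giving precompactness in $X_t$ via $H^{1+\ell}\times H^\ell\Subset H^1\times H^2$... actually $\Subset X_t$), \emph{uniformly} over $z_0\in\AA(s)$ and over $s\le t$. The forcing term in the $\zeta$-equation is $-\psi(w+\zeta)+(\text{correction terms from the splitting of }\varphi)$; the point of hypotheses (H2.a)--(H2.b) (following Grasselli--Pata \cite{GP}) is that the subcritical part $\psi$, with growth $\gamma-2<2$, maps bounded sets of $H^1$ into bounded sets of a space with positive Sobolev regularity, so that a standard higher-order energy estimate on $\zeta$ closes. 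One runs the energy estimate for $\zeta$ in $X_t^\ell$, controls the nonlinear contributions using the already-established $X_t$-bound on $z=w+\zeta$ (from the absorber) together with Sobolev embeddings in dimension three and the subcriticality of $\psi$, and uses (M1) again to handle the time-dependence of $\mu$ (the term $\mu'|u|_{1+\ell}^2$ arising when differentiating $\mu|u|_{1+\ell}^2$). This yields $\sup_{s\le t}\|\zeta(t)\|_{X_t^\ell}<\infty$, hence $\mathfrak{N}(t,s)$ lies in a fixed ball of $X_t^\ell$, which is precompact in $X_t$.

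Having produced the decomposition, Corollary~\ref{cor:attractor} immediately gives that $\A(t)=\omega_\AA(t)$ is a global attractor, and Corollary~\ref{cor:uniqueness} gives uniqueness in the class of pullback-bounded families (as recorded in Remark~\ref{uniqueness}); one should also note $\A(t)\subset\AA(t)$, so $\A$ is automatically pullback-bounded and the uniqueness applies.

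The main obstacle I expect is the critical case $q=4$: there the naive splitting of $\varphi$ as ``leading term plus remainder'' leaves the leading cubic term $\phi$ in the $w$-equation, which is not a harmless linear perturbation, so one must either (i) keep a quasilinear decay estimate for $w$ with a cubic nonlinearity — delicate because the energy of $w$ no longer obeys a clean linear Gronwall inequality — or (ii) more robustly, follow the Grasselli--Pata scheme where the decomposition is of the \emph{semigroup} rather than of the equation, i.e.\ $w$ solves the full nonlinear equation with decaying initial data handled via a contraction/continuity argument, while the regularity of $\zeta$ is bootstrapped using (H2.a)--(H2.b). Reconciling this with the time-dependent spaces $X_t$ and the possibly-degenerate damping $\omega$ — making sure all the Gronwall exponents involve $\int_s^t$-integrals that genuinely diverge as $s\to-\infty$ for each fixed $t$, and that constants are uniform on $(-\infty,t]$ — is the technical heart of the proof. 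Everything else (the energy identities, the Sobolev estimates on the nonlinearity, the compact embedding) is routine given the framework already set up in Section~2 and Theorem~\ref{wp:th}.
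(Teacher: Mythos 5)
Your strategy is the one the paper follows: decompose the solution over the absorber into a decaying part carrying the initial data and a smoother part starting from zero, then invoke Corollaries \ref{cor:attractor} and \ref{cor:uniqueness}. The point you leave open --- how to get decay of the first component in the critical case $q=4$, when the leading cubic term cannot be treated as a perturbation --- is resolved in the paper by your option (i), and it is less delicate than you fear. The decaying part $p$ is taken to solve $\ptt p+\omega\pt p+\mu Ap+2p+\phi(p)=0$ with the original data, i.e.\ the \emph{full} leading nonlinearity stays in the decaying equation. One then simply reruns the Lyapunov-functional estimate of Theorem \ref{wp:th} with $\tilde\varphi(y)=2y+\phi(y)$: hypothesis (H2.a) gives $\tilde\varphi'(y)\geq \tilde a_0|y|^2$, so (H1) holds with $a_1=0$, hence $c_0=b_1=0$ in \eqref{fritto}--\eqref{bound:V} and the additive constant $K_1$ in \eqref{two0} vanishes. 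The same nonlinear energy estimate therefore yields $\E_{X_t}(P_z(t,s))\le K_2 R_\AA\, \e^{-\tilde\eps_\omega(t)(t-s)}\to 0$ as $s\to-\infty$; no linear Gronwall inequality is needed, only the observation that the dissipative mechanism of Theorem \ref{wp:th} produces pure exponential decay once the additive constants disappear. The compact part $n=u-p$ then solves $\ptt n+\omega\pt n+\mu An+\varphi(u)-\phi(p)=2p$ with zero data, and the higher-order energy estimate closes essentially as you describe, by writing $\varphi'(u)\pt u-\phi'(p)\pt p=\psi'(u)\pt u+n\,\phi''(\tilde n)\pt u+\phi'(p)\pt n$ and using (H2.b) to gain $\eta=2-\frac\gamma2$ derivatives.

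One inaccuracy: you claim the smooth bound on the compact component is uniform over $s\le t$. That is neither needed nor available at this stage --- the paper's bound $h(t,s)$ on $\|N_z(t,s)\|_{X_t^{\eta}}$ involves $\int_s^t\mu(\tau)^{-4}\,\d\tau$ and may blow up as $s\to-\infty$ unless one also assumes (M3), which is reserved for the regularity theorem. Corollary \ref{cor:attractor} only requires each $\mathfrak N(t,s)$ to be compact in $X_t$, so the $s$-dependent bound suffices here.
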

\begin{remark}  {Corollary \ref{cor:uniqueness} tells us that $\A(t)\subset\AA(t)$ for every $t \in \R$. More explicitly, there holds the estimate\begin{equation}
\label{bootstrap:start}
\|S(t,s)z\|_{X_t} \leq \Q(R_\AA), \qquad \forall s \in \R,\, z \in \A(s),\, t \geq s,
\end{equation} which   will be of use later.}
\end{remark}
Theorem \ref{osc:attractor} is proven in Section \ref{proof:osc:attractor}.

\subsection{Regularity properties of the global attractor}
In order to derive additional regularity properties of the global attractor $\A$ of  Theorem \ref{osc:attractor} we need additional assumptions on the time-dependent squared speed of propagation $\mu$.
We assume the following two conditions. 
\begin{itemize}
\item[(M2)] there exist constants $C\geq0, \theta \in [0,1)$ such that
\begin{equation}\label{pospart}
\int_{t_1}^{t_2} \frac{(\mu')_+(t)}{\mu(t)} \, \d t \leq C\big(1+(t_2-t_1)^\theta \big), \qquad \forall t_1 \leq t_2,
\end{equation}
where $(\mu')_+$ stands for the positive part;
\item[(M3)] the function  $\nu=\frac 1 \mu$ belongs  to $L^{\infty}(-\infty,t)$, for every $t \in \R$, i.e.
\begin{equation} \label{cicnu}
\cic{\nu}(t):=\|\nu\|_{L^\infty(-\infty,t)} < \infty, \qquad \forall t \in \R.
\end{equation}\end{itemize}

\begin{theorem}\label{thm:reg} We supplement the hypotheses of Theorem \ref{osc:attractor} with \emph{(M2)-(M3)}.
Then the global attractor $\A=\A(t)$ constructed in Theorem \ref{osc:attractor} possesses the additional regularity
$$
\|\A(t)\|_{X^1_t} \leq h_1(t) \qquad \forall t \in \R,
$$
where $h_1$ is a positive increasing continuous function which depends on the physical parameters of the problem (in particular on $\cic{\nu}$) and which can be explicitly computed.
\end{theorem}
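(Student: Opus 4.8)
The plan is to apply Corollary \ref{cor:regularity} with the more regular family $Y_t = X^1_t$, so the whole task reduces to producing a decomposition of $S(t,s)\AA(s)$ of the form $\mathfrak P(t,s)+\mathfrak N(t,s)$ with $\mathfrak P\to 0$ in $X_t$ and $\mathfrak N(t,s)$ bounded in $X^1_t$ uniformly for $s\in(-\infty,t]$. The natural starting point is the already-established inclusion $\A(t)\subset\AA(t)$, equivalently the a priori bound \eqref{bootstrap:start}, so that on the attractor we control $\mu(t)^{1/2}|u|_1$, $\|u\|_{L^q}$ and $|\pt u|$ for all backward times. First I would derive an improved \emph{higher-order} energy estimate: multiply the equation by $A\pt u$ (in $H$), which produces the time derivative of the functional $\Lambda_1(t)= \mu(t)|u|_2^2 + |\pt u|_1^2$ plus lower order terms, together with the damping term $\omega(t)|\pt u|_1^2$ and the critical coupling $\langle \varphi'(u)\nabla u,\nabla\pt u\rangle$ — here (H1)/(H2) and the Sobolev embedding $H^1\hookrightarrow L^6$ in dimension three are used to absorb the nonlinear term, exactly as in the well-posedness proof but one derivative higher. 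The term $\mu'(t)|u|_2^2$ is the dangerous one: its positive part is handled by (M2), which gives an at-most-$(t_2-t_1)^\theta$ growth of $\int(\mu')_+/\mu$, compensated by the exponential decay coming from the damping $\eps_\omega$ built into (M1); condition (M3) enters because the forcing/lower-order terms carry factors of $\nu=1/\mu$, which must be bounded uniformly on $(-\infty,t]$ for the Gronwall argument to close with an $s$-independent constant.

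Second, I would run a \emph{pullback} Gronwall argument on $\Lambda_1$: for a trajectory starting in $\AA(s)$ at time $s$, the differential inequality has the schematic form $\ddt \Lambda_1 + 2\eps_\omega(t)\Lambda_1 \le \Q(R_\AA)\,\cic{\nu}(t)\,(1 + (\mu')_+/\mu)$, whence integrating from $s$ to $t$ and using (M2) to bound $\int_s^t e^{-2\eps_\omega(t)(t-\tau)}(\mu')_+(\tau)/\mu(\tau)\,\d\tau$ by a constant depending only on $C,\theta,\eps_\omega(t)$, one gets $\sup_{s\le t}\Lambda_1(S(t,s)z)\le h_1(t)$ for $z\in\AA(s)$. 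Strictly, since $\AA(s)$ is only bounded in $X_s$ and not in $X^1_s$, one cannot bound $\Lambda_1$ at the initial time; this is precisely why the decomposition rather than a direct estimate is needed. I would therefore split the solution as $S(t,s)z = L(t,s)z + K(t,s)z$, where $L$ solves the linear problem $\ptt v+\omega \pt v+\mu Av=0$ with the full initial data and $K$ carries the nonlinearity with zero data; the linear part $L(t,s)z$ decays exponentially in $X_t$ as $s\to-\infty$ (standard energy estimate for the damped wave equation with the $\eps_\omega$-tuned Lyapunov functional), giving $\mathfrak P(t,s)=L(t,s)\AA(s)$, while $K(t,s)z$ has zero initial data and hence the higher-order estimate above applies to it with no initial contribution, giving $\mathfrak N(t,s)=K(t,s)\AA(s)$ bounded in $X^1_t$ by $h_1(t)$. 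One must check $X^1_t\Subset X_t$ (true since $H^2\Subset H^1$ and $H^1\Subset H$ on a bounded smooth domain) and that balls of $X^1_t$ are closed in $X_t$ (true by reflexivity), so the hypotheses of Corollary \ref{cor:regularity} are met, and the conclusion $\|\A(t)\|_{X^1_t}\le h_1(t)$ follows.

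The main obstacle, as usual for the three-dimensional wave equation with critical nonlinearity, is controlling the critical cubic term $\langle\varphi'(u)\nabla u,\nabla\pt u\rangle$ in the higher-order estimate: $\varphi'(u)$ is only in $L^3$ via $H^1\hookrightarrow L^6$ and $q\le 4$, $\nabla u\in L^2$, and $\nabla\pt u\in L^2$, so a naive Hölder split fails and one needs the (H2) decomposition $\varphi=\phi+\psi$ — the subcritical part $\psi$ is easy, while for the leading part $\phi$, with $|\phi''(y)|\le c(1+|y|)$, one writes $\nabla(\phi(u))=\phi'(u)\nabla u$ and integrates by parts to move a derivative off, trading against $|u|_2$ and thus against the good term $\mu(t)|u|_2^2$ after using $\mu(t)^{1/2}|u|_1$ being controlled on the attractor; the interpolation $\|u\|_{L^\infty}\lesssim |u|_1^{1/2}|u|_2^{1/2}$ (again 3D) then lets one absorb the resulting term into $\eps$ times $\mu(t)|u|_2^2$ provided $\mu(t)$ is not too small, which is where (M3)'s uniform bound on $1/\mu$ on $(-\infty,t]$ is indispensable. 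The second, more bookkeeping-heavy obstacle is tracking the explicit dependence of $h_1(t)$ on $\cic{\nu}(t)$, $R_\AA$, $C$, $\theta$ and $\eps_\omega(t)$ through the pullback Gronwall estimate so as to justify the claim that $h_1$ is a well-defined increasing continuous function of $t$.
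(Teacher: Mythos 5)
Your overall architecture (decompose, prove a higher-order energy estimate on the zero-data component, apply Corollary \ref{cor:regularity} or invariance) is the right shape, but the central step --- a \emph{single} jump from $X_t$ to $X^1_t$ by multiplying once by $A\pt u$ --- does not close at the critical exponent $q=4$, and this is precisely the difficulty the paper's proof is built around. Estimate the critical term the way you propose, pairing $\varphi'(u)\nabla u$ with $\nabla \pt k$ and using $\|u\|_{L^\infty}\lesssim |u|_1^{1/2}|u|_2^{1/2}$: after Young's inequality the coefficient multiplying $|\pt k|_1^2$ is of order $\mu^{-1}|u|_1^4\le c\,\cic{\nu}^3 R_\AA^2$. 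This is bounded by (M3) but not small, so Gronwall returns a factor $\exp\big(c\,\cic{\nu}(t)^3R_\AA^2(t-s)\big)$ that blows up as $s\to-\infty$; the fixed, small decay rate $\eps_\omega(t)$ has no reason to dominate it, and the resulting bound on $\mathfrak N(t,s)$ is not uniform in $s$, which is exactly what Corollary \ref{cor:regularity} requires. The paper avoids this by (i) placing the nonlinear coupling $2\l\varphi(u)-\phi(p)-p,A^\ell n\r$ \emph{inside} the functional $\Lambda_1^\ell$ of Lemma \ref{superlemma}, so that upon differentiation the time derivative falls on the argument of $\varphi$ and the dangerous Gronwall coefficient becomes $|\pt u|_\sigma$ rather than a constant; and (ii) showing (Lemma \ref{rem:addint}, then property (B)$_i$ at each stage) that $\int_{t_1}^{t_2}|\pt u|_\sigma^2\,\d\tau$ grows only like $(t_2-t_1)^\beta$ with $\beta<1$, so that the Gronwall-type Lemma \ref{techlemma} lets the exponential decay win. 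This \emph{sublinear} time-integrability is the real content of hypothesis (M2) --- it enters through integrating the energy identity \eqref{one} against \eqref{pospart} --- whereas in your sketch (M2) only controls a harmless forcing term $\int(\mu')_+/\mu$, which misses its essential role.

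Even with these repairs, one multiplication does not reach $X^1_t$: the gain per step in Proposition \ref{bootstrap} is $\eta(\sigma)=\min\{\tfrac14,\,2-\tfrac\gamma2,\,1-\sigma\}$, at most a quarter of a derivative, because the subcritical remainder $\psi$ in (H2.b) and the commutator terms involving $\phi''$ only close in the fractional spaces $X^{\sigma+\eta}_t$. The paper therefore iterates finitely many times, $\sigma_{i+1}=\sigma_i+\eta(\sigma_i)$, transferring the bound on $N_{z_k}(t,t-k)$ to the attractor itself at each stage via the invariance $S(t,t-k)\A(t-k)=\A(t)$ and the decay of $P_{z_k}$, and re-deriving the sublinear integrability (B)$_{i+1}$ before the next step. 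A further, smaller discrepancy: your decomposition (pure linear evolution plus a zero-data component carrying all of $\varphi(u)$) is the one the paper uses in Section \ref{proof:thm:fd} for the fractal dimension, where the $H^2$ bound \eqref{fd:bound} is already available to control $\|u\|_{L^\infty}$; for the regularity proof itself the coercive leading part $\phi$ must be kept in the decaying component $P_z$ (see \eqref{SYS-P}--\eqref{SYS-Q}), since the difference $\varphi(u)-\phi(p)$ is what admits the splitting \eqref{lemma:Q-3a} on which the fractional-gain estimates rest.
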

The proof of Theorem \ref{thm:reg} is presented in Section \ref{proof:thm:reg}. Here, let us comment and motivate the additional conditions (M2)-(M3).
\begin{remark}\label{rem:M2} Condition (M2) implies additional integrability for the time-derivatives of the solution (see Lemma \ref{rem:addint} below). We describe some relevant  qualitative behaviors falling  inside the scope of assumptions (M1)-(M3).

\vskip1.5mm
\noindent \emph{The case of a decreasing $\mu$.} 
If $\mu$ is a decreasing function on $\R$, conditions (M2), with $C=0$, and (M3) hold true. This ensures that the  expanding universe model of \cite{FGGIRS}, as described in Remark \ref{physcond}, fits into the above assumptions (M1)-(M3), since $\mu$ is positive decreasing by \eqref{graham-hyp}.   
 \vskip1.5mm
\noindent \emph{Finitely many critical points.} Assume, together with (M1) and (M3), that the set $$\mathcal{I}_\mu=\{t \in \R: \mu'(t)>0\}$$(i.e.\ the set on which $\mu$ increases) is the union of finitely many intervals $(t_{i}^{\ell}, t_{i}^{r} )$,  $i=1,\ldots,Z$, $t_{i+1}^r <t_{i}^\ell$,   and possibly $t_Z^\ell =-\infty$. If  $t_Z^\ell =-\infty$, assume further that there exist $\delta>0$ and $\vartheta \in [0,1)$ such that
\begin{equation} \label{M21}
\mu(t_2) \leq \exp(\delta(t_2-t_1)^\vartheta)\mu(t_1)
\end{equation}
holds for each $\forall t_1 \leq t_2 \leq t_Z^r$.
Then \eqref{pospart} holds, for some positive constant $C$ depending on $\mu$, and with $\theta=0$ if $t_Z^\ell>-\infty$, or with   $\theta=\vartheta$ appearing in \eqref{M21}, if $t_Z^\ell=-\infty$.
\vskip1.5mm
\noindent \emph{Oscillating behavior as $s\to -\infty$.} Assume, together with (M1) and (M3), that the set $\mathcal{I}_\mu$  is the union of infinitely many intervals $(t_{i}^{\ell}, t_{i}^{r} )$, $t_{i+1}^r <t_{i}^\ell$, and that there exists $\delta>0,\vartheta \in [0,1)$ such that\begin{center}
\eqref{M21}
holds with   $t_1=t_i^\ell, t_2=  t_i^r$,  $\qquad \forall i=1,2,\ldots$
\end{center} Furthermore, assume that $T_i=t_i^r-t_i^\ell$ satisfy the summability condition
\begin{equation}
\label{summ}
\sum_{i=1}^\infty \frac{1}{T_i} = B< \infty.
\end{equation}
Then \eqref{pospart} holds, with  $C=cB^{\frac{1-\vartheta}{2}}$, and $\theta=\frac{1+\vartheta}{2}$.

We postpone to   Remark \ref{remarkY} the verifications that these assumptions are sufficient for \eqref{pospart}, and hence (M2), to hold. 
\end{remark}

\subsection{Finite-dimensionality of the global attractor}
For a compact subset $K$ of a Banach space $X$, define the \emph{fractal dimension}\footnote{For more details on the fractal dimension (also known as the Minkowski or \emph{box-counting} dimension), we refer the reader to e.g.\ \cite{MAN,SCH}; see also \cite{TEM}.} of $K$ in $X$ as
$$
\dim_{X} K= \limsup_{\eps \to 0^+} \frac{\log \mathcal
N_\eps(K,X)}{\log \textstyle \frac1\eps}
$$
where $ \mathcal N_\eps (K,W) $ 
indicates the minimum number of  balls of $X$ of radius $\eps$ covering
$K$.

The final result is that, under the same assumptions as for Theorems \ref{osc:attractor} and \ref{thm:reg}, the sections $\A(t)$ of the pullback global attractor $\A$ constructed therein have finite fractal dimension, as stated in the next theorem.

\begin{theorem}  \label{thm:fd}Under the assumptions \emph{(H0)-(H2)} on $\varphi$, and \emph{(M1)-(M3)} on $\mu,\omega$, the sections of the pullback global attractor $\A$ of Theorems \ref{osc:attractor} and \ref{thm:reg} have finite fractal dimension:
$$
\dim_{X_t} \A(t)\ \leq h_2(t), \qquad  \forall  t \in \R,
$$
where  the positive increasing function $h_2$ depends only on
the physical parameters of the problem  and can be explicitly
computed.
\end{theorem}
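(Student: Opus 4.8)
The plan is to deduce finite-dimensionality from a \emph{quasi-stability} estimate (exponential contraction modulo a compact seminorm) for the difference of two solutions lying on the attractor, and then to feed this into an abstract covering argument of Ladyzhenskaya type adapted to processes. Fix $t\in\R$; it suffices to bound $\dim_{X_t}\A(t)$. By Corollary~\ref{cor:uniqueness}, Theorem~\ref{thm:reg}, and the monotonicity of $R_\AA$ and of $h_1$, we have the \emph{uniform} bounds $\sup_{s\le t}\|\A(s)\|_{X_s}\le R_\AA$ and $\sup_{s\le t}\|\A(s)\|_{X^1_s}\le h_1(t)$; in particular $\A(t)$ is compact in $X_t$ since $X^1_t\Subset X_t$. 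Note that $X^1_t$-boundedness alone cannot give finite fractal dimension (the entropy of a ball of $X^1_t$ in $X_t$ grows faster than any power of $1/\eps$), so the contraction mechanism is essential.

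\textbf{The quasi-stability estimate.} Fix $s\le t$, $z_1,z_2\in\A(s)$, set $z_i(\tau)=S(\tau,s)z_i=(u_i(\tau),\pt u_i(\tau))$ and $w=(u,\pt u):=z_1-z_2$, so that $\ptt u+\omega\pt u+\mu Au=-\big(\varphi(u_1)-\varphi(u_2)\big)$. I would use the splitting $\varphi=\phi+\psi$ of (H2): by (H2.a), $\phi(u_1)-\phi(u_2)=\big(\int_0^1\phi'(u_2+r(u_1-u_2))\,\d r\big)u$ carries a favourable sign up to a controllable lower-order error, while $\psi(u_1)-\psi(u_2)$ has \emph{subcritical} growth $\gamma-2<2$ by (H2.b) and is therefore estimated, via Sobolev embedding, by a norm of $w$ strictly weaker than that of $X_\tau$. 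Running the perturbed-energy computation already used for Theorems~\ref{wp:th} and \ref{osc:attractor} --- testing with $\pt u+\delta u$ for $\delta\sim\eps_\omega(t)$, and using (M1) to absorb the time-dependence of $\mu$ --- one arrives at an inequality of the form
$$
\E_{X_t}\big(w(t)\big)\ \le\ \Q\,\e^{-\eps_\omega(t)(t-s)}\,\E_{X_s}\big(w(s)\big)\ +\ \Q\int_s^t \e^{-\eps_\omega(t)(t-\tau)}\,\mathfrak n_\tau\!\big(w(\tau)\big)^2\,\d\tau,
$$
where $\Q=\Q\big(R_\AA,\cic{\nu}(t),h_1(t)\big)$ and $\mathfrak n_\tau$ is dominated by a compact seminorm on $X_\tau$ (e.g.\ the norm of $H^{1-\sigma}\times H^{-\sigma}$, $\sigma>0$ small), the higher integrability of $\pt u$ furnished by (M2)--(M3) --- obtained as in the proof of Theorem~\ref{thm:reg} --- being exactly what lets the critical borderline terms be placed into the remainder. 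Since $w(\tau)\in\A(\tau)-\A(\tau)$ is bounded in $X^1_\tau\Subset X_\tau$, this remainder is genuinely a compact perturbation of the decaying main term.

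\textbf{From quasi-stability to a finite covering bound.} Pick $s_\star=s_\star(t)\le t$ with $\Q\,\e^{-\eps_\omega(t)(t-s_\star)}\le\tfrac18$. Then $S(t,s_\star)$ maps $\A(s_\star)$ onto $\A(t)$ and, on $\A(s_\star)$, is a $\tfrac14$-contraction in the energy norms modulo a map that is Lipschitz from $X_{s_\star}$ into a space compactly embedded in $X_t$; here (M3) bounds $\mu$ from below on $(-\infty,t]$ and (M1) bounds its forward growth, so the distortion between the $X_{s_\star}$-norm and the $X_t$-norm on the fixed underlying space $H^1\times H$ is controlled by a constant depending only on $t$. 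The conclusion then follows from the squeezing criterion for the fractal dimension of an invariant set, in the version for processes/pullback attractors: covering $\A(t)=S(t,s_\star)\A(s_\star)$ by $X_t$-balls and propagating the count through the contraction and through the ($\eps$-independent) entropy of the unit ball of the compact seminorm yields $\mathcal N_\eps(\A(t),X_t)\le c\,\eps^{-h_2(t)}$, whence $\dim_{X_t}\A(t)\le h_2(t)<\infty$. The bound $h_2$ increases with $t$ because the pullback length $t-s_\star(t)$ (driven by the decrease of $\eps_\omega$), the regular-space bound $h_1(t)$, and $\cic{\nu}(t)$ all increase with $t$ and are the only data entering the covering estimate.

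\textbf{The main obstacle.} The crux is the quasi-stability estimate in the \emph{critical} case $q=4$: splitting the difference of two attractor solutions into an exponentially decaying piece plus a piece controlled by a compact seminorm is not possible by pure energy estimates at the critical Sobolev exponent, and requires both the structural decomposition (H2) and the extra time-derivative integrability coming from (M2)--(M3), exactly as in the proof of Theorem~\ref{thm:reg}. Carrying this out in the moving-norm spaces $X_t$ with explicit constants whose $t$-dependence stays monotone --- and, in the final step, controlling the $\mu$-induced distortion between the spaces $X_s$, $s\le t$ --- is where essentially all the work lies.
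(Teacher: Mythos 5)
Your proposal is correct in outline but organizes the argument differently from the paper. The paper (Proposition \ref{frac:dec} and the rest of Section 8) does not prove a Chueshov--Lasiecka-type quasi-stability inequality with a compact seminorm under a time integral; instead it decomposes the solution operator itself, $S(t,s)z=D_z(t,s)+K_z(t,s)$, where $D_z$ solves the \emph{linear} problem $\ptt d+\omega\,\pt d+\mu Ad+d=0$ with data $z$ (so differences of $D$'s decay exponentially, property (SP1)), and $K_z$ solves the complementary problem with zero data and is shown to be Lipschitz from $X_s$ into the compactly embedded space $X^1_{s+t_\star}$ (property (SP2)); the finite covering bound then comes from Lemma 6.1 of \cite{DDT} (see Remark \ref{farlock}) rather than from a squeezing iteration along the trajectory. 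More importantly, the paper's treatment of the critical nonlinearity at this stage is much lighter than what you anticipate: since Theorem \ref{thm:reg} already gives $\sup_{s\le\tau\le t_0}|u_z(\tau;s)|_2^2\le\cic{\nu}(t_0)h_1(t_0)$ on the attractor, the embedding $H^2(\Omega)\hookrightarrow L^\infty(\Omega)$ yields uniform $L^\infty$ bounds on attractor trajectories, so $\varphi(u_{z^1})-\varphi(u_{z^2})$ is estimated in $H^1$ directly by $\Q(t_0)\,|u_{z^1}-u_{z^2}|_1^2$ combined with the continuous dependence estimate \eqref{continuous:dep}. Neither the splitting (H2) nor the extra time-integrability of $\pt u$ from (M2)--(M3) is invoked again in Section 8: those ingredients were consumed in proving Theorem \ref{thm:reg}, whose conclusion is precisely what renders the critical exponent harmless here. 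Your route --- quasi-stability with the subcritical $\psi$ and the sign of $\phi'$ handling the critical part at the level of differences --- is a viable alternative and is the standard device when no attractor regularity is available, but as written it leaves its central estimate unproven while not exploiting the $L^\infty$ bound that would make it immediate; once you use the $X^1$-boundedness of $\A(\tau)$ (which you mention only in passing), $\varphi(u_{z^1})-\varphi(u_{z^2})$ is controlled by the $L^2$-norm of the difference, a genuinely compact seminorm on $H^1_0(\Omega)$, and your quasi-stability inequality follows from essentially the same elementary computation the paper uses for (SP2). Both approaches yield the same conclusion with $h_2$ depending on $h_1$, $\cic{\nu}$ and $\eps_\omega$; the paper's is the shorter path given what has already been established.
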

The final Section \ref{proof:thm:fd} contains the proof of Theorem \ref{thm:fd}.



\section{Proof of Theorem \ref{wp:th}} \label{proof:wp}
We begin by deriving a suitable a-priori dissipative estimate for the solution, as stated in the following lemma.
\begin{lemma}
Let $z  \in X$, and $S(t,s)z$ be the solution of \eqref{SYS} with initial time $s \in \R$ and initial data $z$. The following a-priori estimate holds:
 \begin{equation}
\label{two0} \E_{X_t}(S(t,s)z)  \leq K_0\E_{X_s}(z)
\e^{-\eps_\omega(t)(t-s)} + K_1, \qquad \forall t\geq s,
\end{equation}
with $ K_1
=8{c_1}^{-1}(c_0+b_1)$, $c_0$, $b_1$ from \eqref{fritto}  and \eqref{bound:V},  $c_1$ defined below. The  positive constants $c_1,K_0$,
explicitly defined in the proof below,  depend only on the physical
parameters $W$, $a_\imath$ and $q$.
\end{lemma}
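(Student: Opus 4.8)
The strategy is the standard energy method for damped wave equations, adapted to the time-dependent coefficients. I would define a modified energy functional
$$
\Lambda(t) = \E_{X_t}(u(t),\pt u(t)) + 2\eps_\omega(t)\,\langle u(t),\pt u(t)\rangle + (\text{lower-order correction in }\V),
$$
more precisely something of the form $\Lambda = \mu(t)|u|_1^2 + |\pt u|^2 + 2\V(u) + |u|^2 + 2\eps\langle u,\pt u\rangle$, where $\eps = \eps_\omega(t)$ is the decay rate from \eqref{epsom}. The cross term $2\eps\langle u,\pt u\rangle$ is the usual device that makes the energy decay; by \eqref{bound:V} and Poincaré's inequality, for $\eps$ small enough (which is guaranteed since $\eps_\omega(t)\le \frac1{16}$ and the $c_1$-factor controls it against the potential), $\Lambda$ is equivalent to $\E_{X_t}(u,\pt u)$ up to additive constants, i.e. $\tfrac12 \E_{X_t} - c \le \Lambda \le 2\E_{X_t} + c$.

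Next I would differentiate $\Lambda$ along a solution. Using the equation \eqref{SYS}, the terms produced are: $-2\omega(t)|\pt u|^2$ from the damping; $\mu'(t)|u|_1^2$ from the time-dependence of $\mu$; $2\eps'(t)\langle u,\pt u\rangle$ from differentiating the cross term's coefficient; and from $\ddt(2\eps\langle u,\pt u\rangle)$ the terms $2\eps|\pt u|^2 - 2\eps\omega\langle u,\pt u\rangle - 2\eps\mu|u|_1^2 - 2\eps\langle u,\varphi(u)\rangle$. The $\mu'(t)|u|_1^2$ term is handled by (M1): $\mu'(t)\le 2\alpha(t)\mu(t)\le 2\eps_\omega(t)\mu(t)$, so it is absorbed by a fraction of the $-2\eps\mu|u|_1^2$ term, leaving a net negative multiple of $\mu|u|_1^2$. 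The dissipative structure of the nonlinearity, namely \eqref{fritto} ($\langle u,\varphi(u)\rangle \ge \V(u) - c_0$), converts $-2\eps\langle u,\varphi(u)\rangle$ into $-2\eps\V(u) + 2\eps c_0$, giving the needed $-\eps\V(u)$ contribution. The cross terms $\langle u,\pt u\rangle$ are dominated by Cauchy–Schwarz plus Young: $|\langle u,\pt u\rangle| \le \tfrac1{4\eps}|\pt u|^2 + \eps|u|^2 \lesssim$ controlled pieces, using that $\eps'$ is itself bounded by $\eps$ up to the fixed constants (since $\omega$ is monotone, $\eps_\omega$ has bounded variation of the right type). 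Collecting everything, one arrives at a differential inequality
$$
\ddt \Lambda(t) \le -\eps_\omega(t)\,\Lambda(t) + C_1,
$$
for an explicit constant $C_1$ depending only on $W$, the $a_\imath$, $q$ (this is where $c_1$ gets pinned down — it is the implicit constant measuring how much of the potential energy can be reabsorbed).

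Finally, I would integrate the differential inequality via the Gronwall lemma in the form $\Lambda(t) \le \Lambda(s)\exp(-\int_s^t \eps_\omega(\tau)\,\d\tau) + C_1\int_s^t \exp(-\int_\tau^t\eps_\omega)\,\d\tau$; since $\eps_\omega$ is \emph{decreasing} (as $\omega$ is decreasing and the other terms in the $\min$ are constants), $\int_s^t \eps_\omega(\tau)\d\tau \ge \eps_\omega(t)(t-s)$ and $\int_s^t e^{-\int_\tau^t \eps_\omega}\,\d\tau \le 1/\eps_\omega(t)$... but in fact it is cleaner to keep it as $\le 1/\eps_\omega(t)$ only if needed; here the statement bundles the constant into $K_1 = 8c_1^{-1}(c_0+b_1)$, so one just needs $C_1/\eps_\omega(t)$-type bounds to reduce to that. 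Re-expressing $\Lambda$ back in terms of $\E_{X_t}$ using the equivalence (picking up the multiplicative constant $K_0$ and additive constant) yields \eqref{two0}.

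The main obstacle I anticipate is bookkeeping the constants so that the dissipation rate that survives is \emph{exactly} $\eps_\omega(t)$ with the prescribed form \eqref{epsom}, and in particular isolating the constant $c_1$ (depending only on $V$) that governs how much potential energy can be controlled — this requires carefully tracking the interplay between the $\tfrac1{16}$ and the $\min\{1,\omega(t),c_1/(1+W)\}$ structure, and checking that the term $2\eps'(t)\langle u,\pt u\rangle$ (which could a priori be a problem if $\eps_\omega$ were not well-behaved) is genuinely lower order. The nonlinear estimate itself is not an obstacle in the base space $X_t$ because no Sobolev embedding of $\varphi(u)$ is needed at this level — \eqref{fritto} does all the work — so the critical growth $q=4$ plays no role here.
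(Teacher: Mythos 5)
Your overall strategy --- a perturbed energy $\Lambda$ containing the cross term $\eps\l u,\pt u\r$, a differential inequality, and Gronwall --- is exactly the paper's. However, there are two concrete gaps in the execution as written.

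First, you take $\eps=\eps_\omega(t)$ \emph{time-dependent} inside the functional and dismiss the resulting term $2\eps_\omega'(t)\l u,\pt u\r$ on the grounds that ``$\eps'$ is itself bounded by $\eps$ up to fixed constants''. That is not justified by the hypotheses: $\omega$ is only assumed decreasing, positive, differentiable and bounded, so $|\omega'|$ (hence $|\eps_\omega'|$) can be arbitrarily large at a given time, and $\l u,\pt u\r$ is not sign-definite. (Your bounded-variation remark could in principle be made to work, since $\int_s^t|\eps_\omega'|\le\tfrac1{16}$ because $\eps_\omega$ is monotone with values in $[0,\tfrac1{16}]$, but you would have to actually run that argument.) The paper sidesteps the issue entirely: it fixes the final time $t_0$ and takes $\eps=\eps_\omega(t_0)$ \emph{constant} on $[s,t_0]$; monotonicity of $\omega$ gives $\omega(t)\ge\omega(t_0)\ge16\eps$ there, and the clause $\sup_{s\le t_0}\alpha(s)\le\eps_\omega(t_0)$ in (M1) gives $\mu'\le2\eps\mu$ there. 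Then no $\eps'$ term arises at all, and constant-rate Gronwall produces exactly the decay $\e^{-\eps_\omega(t_0)(t_0-s)}$ appearing in \eqref{two0}.

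Second, the uniformity of $K_1$. You arrive at $\ddt\Lambda\le-\eps_\omega\Lambda+C_1$ with $C_1$ an absolute constant; integrating this yields an additive term of order $C_1/\eps_\omega(t)$, which is \emph{not} uniformly bounded, since $\omega(t)\to0$ (hence $\eps_\omega(t)\to0$) as $t\to+\infty$ is allowed. To obtain the constant $K_1=8c_1^{-1}(c_0+b_1)$ of the statement, the source term must itself carry a factor of $\eps$ --- in the paper it is $4\eps(2c_0+b_1)$, coming from $-2\eps\l\varphi(u),u\r\le-2\eps\V(u)+2\eps c_0$ and from the lower bound on $\Lambda_\star$ --- so that $\int_s^t 4\eps(2c_0+b_1)\e^{-\eps(t-\tau)}\,\d\tau\le4(2c_0+b_1)$ independently of $\eps$. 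Relatedly, the bare $|u|^2$ you place in $\Lambda$ produces a derivative term $2\l u,\pt u\r$ with an order-one (not order-$\eps$) coefficient, which cannot be absorbed by $\omega|\pt u|^2$ when $\omega$ is small; the paper instead recovers the $|u|^2$ part of $\E_{X_t}$ from the lower bound $\V(u)\ge b_0|u|^2-b_1$ of \eqref{bound:V} and carries only $2\eps\omega|u|^2$ (with the factor $\eps$) in the cross functional.
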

\begin{proof}   
 Hereafter, $(u(t),\pt u(t))$ denotes the solution to \eqref{SYS} with initial time $s \in \R$ and initial condition $z=(u_0,v_0) \in X, $ which we assume to be sufficiently regular. 

A  multiplication of (P) by $\pt u$ entails
\begin{equation}
\label{one} \ddt \left[\mu|u |_1^2 + |\pt u |^2  + 2 \V(u)
\right] -\mu'|u |_1^2 + 2\omega|\pt u |^2  =0,\end{equation}
while multiplying (P) by $u$ and then using \eqref{fritto} yields
\begin{equation}
\label{two} \ddt  \left[ \omega| u|^2+ 2\l \pt u,  u  \r \right] +
2\mu| u |_1^2 -\omega'|u|^2 - 2|\pt u |^2 = -2 \l\varphi(u),u \r \leq
-2\V(u) + 2c_0.
\end{equation} 
For $\eps>0$ to be determined later, we add
\eqref{one} to $2\eps$-times \eqref{two}. Setting
\begin{align*}
&\Lambda  =  \mu|u |_1^2 + |\pt u |^2  + 2\V(u) + 2\eps( \omega| u|^2+ 2\l \pt u,  u  \r ), \\
& \Lambda_\star = (2\eps\mu -\mu') | u |_1^2 + (\omega-6\eps)|\pt
u|^2 +(-2\eps \omega' -4\eps^2\omega) |u |^2-4\eps^2 \l\pt u, u \r,
\end{align*}
we obtain
\begin{equation}
\label{three} \ddt \Lambda + 2\eps \Lambda  + \Lambda_\star + \omega |\pt
u|^2   \leq 4\eps c_0.
\end{equation}
Let us now fix $t_0 \geq s$.
By restricting ourselves to (say) $\eps \leq \min\big\{\frac{1}{4},\frac{b_0}{2}\big\}$, we claim   the bound
\begin{equation}
\label{bound} c_1\E_{X_t}[(u(t),\pt u(t))] - 2b_1 \leq \Lambda (t) \leq
c_2\E_{X_t}[(u(t),\pt u(t))]
\end{equation}
with $c_1= \min\{qb_0,1\}/2$ and $c_2$  a positive constant
depending (increasingly) on $W$ and $b_2$. Indeed, the left-hand side bound comes from \eqref{bound:V}, and by applying the Cauchy-Schwarz inequality combined with the restriction on $\eps$;  the right-hand bound simply follows from \eqref{bound:V}.
We now further restrict $\eps$ in order to control $\Lambda_\star$ from below.
We claim that, if we choose
$$
\eps=\eps_\omega(t_0)=\textstyle \frac{1}{16}\min\left\{1, \omega(t_0),\frac{c_1}{1+W}  \right\},
$$
(as in \eqref{epsom}) then
\begin{equation}
\label{boundstar}
\Lambda_\star(t) \geq -\eps \Lambda(t)-2\eps b_1, \qquad \forall t \in [s,t_0]. \end{equation}
 Indeed, a consequence of the lhs bound in \eqref{bound} is that 
$$
|u (t)|^2 \leq \E_{X_t}[(u(t),\pt u(t))]  \leq c_1^{-1}[\Lambda(t) + 2b_1];
$$
therefore, using assumption \eqref{assmu} to control the first term on the rhs, and also recalling that  $\omega$ is decreasing, 
\begin{align*}
\Lambda_\star & \geq
(2\eps\mu -\mu') | u |_1^2 + (\omega-6\eps-4\eps^2)|\pt
u|^2 +(-2\eps \omega' -4\eps^2(1+\omega)) |u |^2
\\ & \quad \,\, (\textrm{with } \mu'(t) \leq 2\alpha(t)\mu(t) \leq 2 \eps_\omega(t_0) \mu(t) = 2\eps \mu(t),\; t \leq t_0)\\
& \geq -4\eps^2 (1+W)|u |^2  \geq -4\eps^2 (1+W)c_1^{-1}\Lambda -
2\eps^2 b_1c_1^{-1} \\ & \geq - \eps \Lambda-2\eps b_1 ,
\end{align*}
as claimed. The above turns \eqref{three}
into
\begin{equation}
\label{three:a}  \ddt \Lambda + \eps \Lambda
\leq 4\eps (2c_0+b_1).
\end{equation}
Multiplying the above inequality by $\e^{\eps t}$ and integrating
between $s$ and $t_0$, we obtain
\begin{equation}
\label{four}  \Lambda(t_0) \leq  \Lambda(s) \e^{-\eps(t_0-s)} +
4(2c_0+b_1);
\end{equation}
an exploitation of \eqref{bound} then leads to
\begin{align}
  c_1\E_{X_t}[(u(t_0),\pt u(t_0))]  &\leq  \Lambda(t_0) + 2b_1 \nonumber \\ &
\leq  \Lambda(s) \e^{-\eps(t_0-s)} + 8(c_0+b_1)  \nonumber \\ &
 \leq   c_2\E_{X_s}( z)  \e^{-\eps(t_0-s)}   + 8(c_0+b_1), \label{four:a}
\end{align}
which is \eqref{two0}, with $K_0=c_2/c_1, K_1
=8c_1^{-1}(c_0+b_1).$ Note that, like $b_1$ and $c_0$, $K_1=0$
when $a_1=0$ in (H1). This completes the proof of \eqref{two0}.
\end{proof}
Having  \eqref{two0} at our disposal, global existence of (weak)
solutions $(u(t),\pt u(t))$ to Problem \eqref{SYS}  is obtained by
means of a standard Galerkin scheme. The solutions we obtain in
this way satisfy, on any interval $(s,t)$, $-\infty<s<t<+ \infty,$
$$
u \in L^\infty\big(s,t; H^1\big) \cap
L^q\big(s,t;L^q(\Omega)\big), \quad \pt u \in L^\infty\big(s,t;
H\big).
$$
Replacing $L^\infty$ on $(s,t)$ with continuity on $[s,t]$
requires some additional work, as explained in \cite[Section
II.4]{TEM}.

 \vskip1.5mm
 Uniqueness of solutions, and
therefore generation of the process $S(t,s)$ will then follow once
the continuous dependence estimate \eqref{continuous:dep} is
established.

\begin{proof}[Proof of \eqref{continuous:dep}] For $\imath=1,2$, let
$z^\imath=(u_0^\imath,v_0^\imath) \in X$ with $\E_{X_s}(z^\imath)
\leq R$. Accordingly, call $(u^\imath(t),\pt
u^\imath(t))$ the solution corresponding to initial datum $z^\imath$, prescribed at time $s \in \R$. Preliminarily, we recall that the
dissipative estimate  \eqref{two0}   can be rewritten as
\begin{equation}
\label{dependence:1} \E_{X_t}[(u^\imath(t),\pt u^\imath(t))]  \leq K_0 R + K_1:=\Q(R), \qquad \forall
t\geq s.
\end{equation}

Then, we observe that the difference $$\bar z(t) = (u^1(t),\pt u^1(t))-
(u^2(t),\pt u^2(t)) = (\bar u(t),\pt \bar u(t))$$ fulfills the Cauchy
problem on $(s,+\infty)$
$$
\begin{cases}
\displaystyle \ptt \bar u+\omega  \pt \bar u + \mu A \bar u + \bar u|\bar
u|^{q-2}   +\bar  u=   \bar u + \varphi(u^2)-\varphi(u^1) +  \bar u|\bar
u|^{q-2},
 \\
\bar z(s) = z^1-z^2.
\end{cases}
$$
Assuming $(\bar u ,\pt \bar u )$ sufficiently smooth, we multiply the above equation by $\pt \bar u$ and obtain the
differential inequality
\begin{equation} \label{dependence:2}
\ddt \E_{X_t}(\bar z) \leq \mu'|\bar u|_1^2+ 2 \l\bar u + \varphi(u^2)-\varphi(u^1)  + \bar u|\bar
u|^{q-2}, \pt \bar u \r.
\end{equation}
The first term in the right-hand side is bounded,   using \eqref{assmu}, by $\alpha\mu |\bar u|_1^2$, observing that $\alpha$ is bounded by 1.
The second term  is easily bounded by $
2|\bar u||\pt \bar u|$. Regarding the third, in view of (H1), we exploit H\"older's inequality and ($2\leq q \leq 4$) usual Sobolev embeddings and obtain $$
2 \l \varphi(u^2)-\varphi(u^1), \pt \bar u \r  \leq  c \Big(1+  |u^{1}|^2_1+ |u^{2}|^2_1 \Big)|\bar u|_1 |\pt \bar u|.
$$
Treating $|\bar u|^{q-2}$ as done above for $\varphi'$ yields the similar control
$$
2 \l \bar u |\bar u|^{q-2}, \pt \bar u \r \leq     c \Big(1+  |u^{1}|^2_1+ |u^{2}|^2_1 \Big)|\bar u|_1 |\pt \bar u|.
$$
Recalling \eqref{dependence:1}, we  bound
\begin{equation} \label{dependence:2ab}  
|u^{\imath}(t)|_1^2 \leq \mu(t)^{-1} \E_{X_t} (u^\imath(t),\pt u^\imath(t)) \leq \mu(t)^{-1} \Q(R),
\end{equation}
and get the estimate
\begin{equation} \label{dependence:2a}  
\ddt \E_{X_t}(\bar z(t)) \leq c\big(1+\mu(t)^{-1}\big) \Q(R)\E_{X_t}(\bar
z(t)).
\end{equation}
We then apply Gronwall's lemma on $(s,t)$ to obtain
\begin{align*}
&\E_{X_t}(\bar z(t)) \leq   \exp\left(\Q_1(R) \big((t-s)+ \textstyle\int_{s}
^t \mu(\tau)^{-1} \, \d \tau \big)\right) \E_{X_s}(z^1-z^2),
\end{align*}
where $\Q_1(R)=c\Q(R)$, as claimed in \eqref{continuous:dep}, so that the proof is complete.
\end{proof}
\begin{proof}[Conclusion of the proof of Theorem \ref{wp:th}] We are only left to  show that the family $\AA$ defined in \eqref{radius} is pullback-absorbing for the process $S(t,s)$, with a suitable choice of $R_\AA$ specified below. Let $\B$ be a pullback-bounded family and, for $t \in \R$, let
$$
R(t)=\sup_{s \in (-\infty,t]}\E_{X_s} [\B(s)],
$$
which is finite for every $t$, due to the equivalence between the energy $\E_{X_t}$ and the $X_t$-norm.
Estimate \eqref{two0} then reads
$$
\E_{X_t}(S(t,s)z) \leq K_0 \E_{X_s}(z) \e^{-\eps_\omega(t)(t-s)} + K_1 \leq
K_0 R(t) \e^{-\eps_\omega(t)(t-s)} +   K_1 \leq 1 +2K_1
$$
for every $z \in \B(s)$, provided that
\begin{equation} \label{enteringtime}
 s \leq t_0=t_0(t): =t-\max\big\{0,(\eps_{\omega}(t))^{-1}\textstyle\log \frac{K_0 R(t)}{1+K_1} \big\}.
\end{equation}
Taking the supremum over $z \in \B(s)$ , we obtain
$$
\E_{X_t}[S(t,s)\B(s)]  \leq 1+2K_1, \qquad \forall s \leq t_0,
$$
which, setting $R_\AA=1+2K_1 $, reads exactly $S(t,s)\B(s) \subset
\AA(t)$ whenever $s \leq t_0(t)$. This ensures that $\AA$ is  pullback-absorbing for the process $S(t,s)$, and concludes the proof of Theorem \ref{wp:th}.
\end{proof}

\begin{remark} The radius $R_{\AA}$ of the absorber $\AA(t)$ does not depend on $t$; however, the entering time of a pullback-bounded family $\B$ into $\AA(t)$ depends explicitly on $t$ (see \eqref{enteringtime}), unless $\eps_{\omega}$ in \eqref{epsom} is uniformly bounded from below.
\end{remark}

\section{Proof of Theorem \ref{osc:attractor}} \label{proof:osc:attractor}
We will entirely devote ourselves to the proof of the (critical) case $q=4$. The proof in the case $q<4$, where no additional assumptions are needed, can be handled in a much simpler way  along the same lines.   
We will work
throughout  with
$$
z=(u_0,v_0) \in \AA(s);
$$
until the end of the section, the generic constants $c>0$
appearing below depend only on $R_\AA$, whose dependence   on the
physical parameters of the problem has been specified earlier.
Hence, the estimate \eqref{two0} now reads
\begin{equation}
\label{palmiro} \E_{X_t}(S(t,s) z)  \leq  K_0 R_\AA + K_1 :=c_2 ,
\qquad \forall s \in \R, t     \geq s.
\end{equation}

We decompose the solution of Problem (P) into 
\begin{equation} \label{decompos}
(u(t),\pt u(t)) = S(t,s)z = P_{z}(t,s)  + N_z(t,s) = (p(t),\pt p(t)) + (n(t),\pt n(t)),
\end{equation}
where
\begin{equation}
\label{SYS-P}
\begin{cases}
\displaystyle
\ptt p +\omega   \pt p  + \mu  A p  + 2  p  +\phi(p)=0, & t \geq s,\\ \\
p(s)= u_0 , \pt p(s) = v_0,
\end{cases}
\end{equation}

\begin{equation}
\label{SYS-Q}
\begin{cases}
\displaystyle
\ptt n +\omega   \pt n + \mu A n + \varphi(u) -\phi(p) = 2  p, & t \geq s,\\ \\
n(s)= 0, \pt n(s) = 0.
\end{cases}
\end{equation}
\begin{lemma} \label{lemma:P}
There exists  $K_2>0$ such that
\begin{equation}
\label{SYS-Pa}
\E_{X_t}(P_z (t,s)) \leq K_2R_\AA \e^{-\tilde{\eps}_\omega(t) (t-s)}  \leq K_2R_\AA  \qquad \forall
t \in \R, s \leq t,
\end{equation}
where $\tilde{\eps}_\omega $ is given by \eqref{epsom}, with $c_1$ replaced by $\tilde c_1>0$, which is specified below.
\end{lemma}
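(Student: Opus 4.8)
The strategy is to rerun, for the auxiliary problem \eqref{SYS-P}, the Lyapunov-functional argument that produced the dissipative estimate \eqref{two0}; the decisive difference is that the modified nonlinearity $2y+\phi(y)$ is \emph{sign-definite and coercive, with no additive constant}, so the resulting Gronwall inequality has a vanishing right-hand side and yields genuine exponential decay rather than mere boundedness. Write $\Phi(y)=\int_0^y\phi(r)\,\d r$ (we may assume $\phi(0)=0$, the natural choice when $\phi$ is the leading term of $\varphi$, so that $\Phi\ge0$) and set $\V_\Phi(p)=\int_\Omega\big(p(x)^2+\Phi(p(x))\big)\,\d x$. Multiplying \eqref{SYS-P} by $\pt p$ and by $p$ — first for a Galerkin approximation, then passing to the limit exactly as for \eqref{SYS} — one gets the analogues of \eqref{one}--\eqref{two}:
\begin{align*}
&\ddt\big[\mu|p|_1^2+|\pt p|^2+2\V_\Phi(p)\big]-\mu'|p|_1^2+2\omega|\pt p|^2=0,\\
&\ddt\big[\omega|p|^2+2\l\pt p,p\r\big]+2\mu|p|_1^2-\omega'|p|^2-2|\pt p|^2+2\l 2p+\phi(p),p\r=0 .
\end{align*}

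The algebraic input replacing \eqref{fritto}--\eqref{bound:V} is now elementary: since $\phi(0)=0$ and $\phi'\ge0$ (from (H2.a)) one has $y\phi(y)\ge\Phi(y)\ge0$ for all $y$, whence
$$\l 2p+\phi(p),p\r\ \ge\ |p|^2+\V_\Phi(p);$$
moreover, integrating (H2.a) twice and using $|\phi''(y)|\le c(1+|y|)$ yields $\tilde b_0\big(\|p\|_{L^q}^q+|p|^2\big)\le\V_\Phi(p)\le\tilde b_2\big(\|p\|_{L^q}^q+|p|^2\big)$ with $\tilde b_0,\tilde b_2>0$ depending only on $\tilde a_0,q,|\Omega|$. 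No additive constant appears anywhere. Next I would set, for $\eps>0$ to be fixed, $\Lambda_P=\mu|p|_1^2+|\pt p|^2+2\V_\Phi(p)+2\eps\big(\omega|p|^2+2\l\pt p,p\r\big)$; adding the first identity to $2\eps$ times the second, the $\V_\Phi$-contributions cancel and one is left with $\ddt\Lambda_P+2\eps\Lambda_P+\Lambda_{P,\star}\le0$, where $\Lambda_{P,\star}$ has exactly the structure of $\Lambda_\star$ in the proof of \eqref{two0} but \emph{without} the $4\eps c_0$ term and \emph{with} an extra favourable $|p|^2$ contribution coming from the displayed inequality. Taking $\eps=\eps_\omega(t_0)$ as in \eqref{epsom} — so that $\mu'(t)\le2\alpha(t)\mu(t)\le2\eps\mu(t)$ on $[s,t_0]$ by (M1), $\omega(t)-6\eps\ge0$ by monotonicity of $\omega$, and the quadratic cross-terms are absorbed exactly as in the proof of \eqref{two0} — one checks $\Lambda_{P,\star}\ge0$ on $[s,t_0]$, hence $\ddt\Lambda_P+2\eps_\omega(t_0)\Lambda_P\le0$ there.

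Finally, Gronwall's lemma on $[s,t_0]$ (relabelling $t_0=t$) gives $\Lambda_P(t)\le\Lambda_P(s)\,\e^{-2\eps_\omega(t)(t-s)}$. Using that $\Lambda_P$ is, uniformly in time, equivalent to the energy of the $P$-component — say $\tilde c_1\,\E_{X_t}(P_z(t,s))\le\Lambda_P(t)$ and $\Lambda_P(s)\le\tilde c_2\,\E_{X_s}(z)$, with \emph{no} additive constant, by the bounds on $\V_\Phi$ and the smallness of $\eps$, where $\tilde c_1$ is the coercivity constant of $\Lambda_P$ and $\tilde c_2$ depends increasingly on $W$ — and recalling that $\E_{X_s}(z)\le R_\AA$ since $z\in\AA(s)$, we conclude
$$\E_{X_t}(P_z(t,s))\ \le\ \tfrac{\tilde c_2}{\tilde c_1}\,R_\AA\,\e^{-2\eps_\omega(t)(t-s)}\ =:\ K_2R_\AA\,\e^{-\tilde\eps_\omega(t)(t-s)},$$
where $\tilde\eps_\omega$ denotes \eqref{epsom} with $c_1$ replaced by $\tilde c_1$, chosen with $\tilde c_1\le\min\{c_1,\tfrac12\kappa_P\}$ ($\kappa_P$ the coercivity constant just used) so that $\tilde\eps_\omega(t)\le2\eps_\omega(t)$; this is precisely \eqref{SYS-Pa}. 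The only genuine care required — the ``obstacle'', though it is bookkeeping rather than analysis — is the coordinated choice of the constants $\tilde c_1,\tilde c_2,\tilde\eps_\omega$ making this last chain consistent and, in the $q=4$ case, verifying the two-sided bound on $\V_\Phi$ via (H2.a); every differential inequality used is a literal repetition of its counterpart in the proof of \eqref{two0}, with the constants $c_0,b_1$ set to $0$.
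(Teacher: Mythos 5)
Your proposal is correct and follows essentially the same route as the paper: one reruns the dissipative estimate \eqref{two0} with $\varphi$ replaced by $\tilde\varphi(y)=2y+\phi(y)$, observes via (H2.a) that (H1) then holds with $a_1=0$ so that the constants $c_0,b_1$ in \eqref{fritto} and \eqref{bound:V} (hence $K_1$ in \eqref{two0}) all vanish, and obtains pure exponential decay. Your extra observations (the normalization $\phi(0)=0$, the sign $y\phi(y)\ge\Phi(y)\ge0$, and the favourable $|p|^2$ term making $\Lambda_{P,\star}\ge0$) are consistent with, and merely flesh out, the paper's one-paragraph argument.
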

\begin{proof}
We peruse the proof of  \eqref{two0}, Theorem \ref{wp:th},
replacing $\varphi $ with  $\tilde\varphi(y)=2y+\phi(y)$. From (H2.a) we read that (H1)
holds with $a_1=0,$ and the corresponding potential $\tilde{V}(y)$ satisfies \eqref{fritto} with $c_0=0$, and
\eqref{bound:V} with (e.g.) $b_0=\frac{1}{4},b_2=1$, and $b_1=0$. Incidentally, $\tilde c_1=\min\{6b_0,1\}/2 $. Hence
$K_1=0$ in \eqref{two0}, which is exactly the claimed estimate.
Observe that the constant  $K_2$ can be explicitly
computed.\end{proof}

\begin{remark} 
We  observe that $n(t)=u(t)-p(t)$, so that, using \eqref{palmiro} and
Lemma~\ref{lemma:P},
\begin{equation} \label{lemma:Q-1}
\|n(t) \|^4_{L^4} + \mu(t) | n(t) |_1^2+ |\pt n(t)|^2 \leq
c.
\end{equation}

\end{remark}

In the next lemma, we will derive  (formally) certain differential inequalities for some energy functionals involving the solution of \eqref{SYS-Q}. These inequalities will be used both to obtain the compactness of the solution operator $N_z(t,s)$ and to conclude the proof of Theorem \ref{osc:attractor}, and, in the subsequent Section 7, to obtain  a regularity estimate for the global attractor.  
 \begin{lemma}For a given $0\leq \ell \leq 1$, and for given $ v \in H^{1+\ell},w\in H^{\ell}, t \geq s$, we define the functionals
\begin{align*}
 &\Lambda_1^\ell(v,w;t)=\mu(t)|v |_{1+\ell}^2 + | w|_\ell^2 + 2 \l \varphi(u(t))-\phi(p(t))-p(t), A^{\ell} v \r,&
 \\ & \Lambda_{2}^\ell (v,w;t) = \omega(t) |v|^2_{\ell} + 2 \l v , w\r_\ell ,
 \\ & \Lambda_{3}^\ell (v,w;t) = \Lambda_{1}^\ell (v,w;t) + 2\eps_\omega(t) \Lambda_{2}^\ell (v,w;t).\end{align*} 
For a fixed $\sigma \in [0,1]$, let  $\eta=\eta(\sigma)=\min\{\frac14, 2-\frac\gamma 2, 1-\sigma\}$, so that $\eta+\sigma\leq1$. Then, for every $t \geq s$, we have the bounds
\begin{align} 
\label{lemma:Q-ctrl1} &\textstyle \frac{\mu(t)}{2}|n(t) |_{1+\sigma+\eta}^2 + | \pt n(t)|_{\sigma+\eta}^2 - c\left(1+\textstyle\frac{1}{\mu(t)^{4}}\right) \leq \Lambda_1^{\sigma+\eta} (n(t),\pt n(t);t) \\ &\leq  2\mu(t) |n(t) |_{1+\sigma+\eta}^2 + | \pt n(t)|_{\sigma+\eta}^2 + c\left(1+ \textstyle\frac{1}{\mu(t)^{4}}\right), \nonumber
\end{align}
\begin{equation} \label{lemma:Q-8a} \textstyle
-\frac{1}{2} |\pt n(t)|_{\sigma+\eta}^2 -\frac{c}{\mu(t)}\leq \Lambda_2^{\sigma+\eta}(n(t),\pt n(t);t) \leq c |\pt n(t)|_{\sigma+\eta}^2 +  \frac{c}{\mu(t)},
\end{equation}
and the differential inequality
\begin{align}  
&\ddt \Lambda_3^{\sigma+\eta} (n(t),\pt n(t);t)  + 2\eps_\omega   \Lambda_3^{\sigma+\eta} (n(t),\pt n(t);t)    \nonumber  \\ &\leq 
 c\left(1+ \textstyle\frac{1}{\mu(t)^{4}}\right)\big[|\pt u(t)|_\sigma+ |\pt p(t)|+\mu(t)|p(t)|_1^2\big] \Lambda_3^{\sigma+\eta} (n(t),\pt n(t);t) \nonumber \\ & \quad +  c\left(1+ \textstyle\frac{1}{\mu(t)^{6}}\right)\big[1 +|\pt u(t)|_\sigma+ \mu(t)|p(t)|_1^2\big].  \label{thediffin}
 \end{align} \label{superlemma}
 \end{lemma}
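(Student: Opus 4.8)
The plan is to multiply equation \eqref{SYS-Q} by $A^{\sigma+\eta}\pt n$ and extract a differential identity for $\Lambda_1^{\sigma+\eta}(n,\pt n;t)$, then combine it with the identity obtained by multiplying by $A^{\sigma+\eta}n$ (which governs $\Lambda_2^{\sigma+\eta}$), exactly mirroring the two-multiplier scheme \eqref{one}-\eqref{three} used in the proof of \eqref{two0}. First I would establish the two-sided bounds \eqref{lemma:Q-ctrl1} and \eqref{lemma:Q-8a}: these are elementary once the cross term $2\langle \varphi(u)-\phi(p)-p, A^{\sigma+\eta}n\rangle$ is estimated. Writing $\varphi(u)-\phi(p) = (\phi(u)-\phi(p)) + \psi(u)$ and using (H2.a)-(H2.b), Sobolev embeddings in $\R^3$, and the already-known bounds \eqref{palmiro}, \eqref{lemma:Q-1} for $u,p,n$, one bounds this cross term by $\delta\mu(t)|n|_{1+\sigma+\eta}^2 + c_\delta(1+\mu(t)^{-4})$; absorbing $\delta\mu(t)|n|_{1+\sigma+\eta}^2$ into the leading term gives \eqref{lemma:Q-ctrl1}, while \eqref{lemma:Q-8a} follows from Cauchy--Schwarz on $2\langle n,\pt n\rangle_{\sigma+\eta}$ together with $|n|_{\sigma+\eta}^2 \le c\,\mu(t)^{-1}\mu(t)|n|_{1+\sigma+\eta}^2$ and \eqref{lemma:Q-1}.

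Next I would derive the differential inequality \eqref{thediffin}. Differentiating $\Lambda_1^{\sigma+\eta}$ produces: (a) $\mu'|n|_{1+\sigma+\eta}^2$, handled by (M1), $\mu'\le 2\alpha\mu\le 2\eps_\omega\mu$, which is absorbed by the $2\eps_\omega\Lambda_3$ term up to lower-order pieces; (b) from the principal identity, $-2\omega|\pt n|_{\sigma+\eta}^2$ plus the forcing $2\langle 2p - (\varphi(u)-\phi(p)), A^{\sigma+\eta}\pt n\rangle$ reorganized against the time-derivative of the cross term; (c) the time-derivative of $2\langle \varphi(u)-\phi(p)-p, A^{\sigma+\eta}n\rangle$, which is where $\pt u$, $\pt p$ enter. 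The key point is that $\ddt[\varphi(u)-\phi(p)-p]$ involves $\varphi'(u)\pt u - \phi'(p)\pt p - \pt p$, and pairing this against $A^{\sigma+\eta}n$ and estimating via H\"older and Sobolev yields terms of the shape $c(1+\mu^{-4})\big(|\pt u|_\sigma + |\pt p| + \mu|p|_1^2\big)\Lambda_3^{\sigma+\eta}$ together with an inhomogeneous remainder $c(1+\mu^{-6})(1+|\pt u|_\sigma+\mu|p|_1^2)$. For $\Lambda_2^{\sigma+\eta}$ one multiplies \eqref{SYS-Q} by $A^{\sigma+\eta}n$ to get $\ddt\Lambda_2^{\sigma+\eta} = \omega'|n|_{\sigma+\eta}^2 + 2|\pt n|_{\sigma+\eta}^2 - 2\mu|n|_{1+\sigma+\eta}^2 - 2\langle \varphi(u)-\phi(p)-2p, A^{\sigma+\eta}n\rangle$; since $\omega$ is decreasing, $\omega'\le 0$, and the $-2\mu|n|_{1+\sigma+\eta}^2$ term is what makes the combination $\Lambda_3 = \Lambda_1 + 2\eps_\omega\Lambda_2$ dissipative. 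Forming $\ddt\Lambda_3^{\sigma+\eta} + 2\eps_\omega\Lambda_3^{\sigma+\eta}$, using the choice of $\eps_\omega$ in \eqref{epsom} to dominate the damping-deficit terms (as in the passage from \eqref{three} to \eqref{three:a}), and replacing $|n|_{\sigma+\eta}^2$, $|\pt n|_{\sigma+\eta}^2$, $\mu|n|_{1+\sigma+\eta}^2$ by $\Lambda_3^{\sigma+\eta}$ plus constants via \eqref{lemma:Q-ctrl1}-\eqref{lemma:Q-8a}, gives \eqref{thediffin}.

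The main obstacle is step (c): the careful fractional-Sobolev bookkeeping needed to show that every term arising from $\ddt\langle \varphi(u)-\phi(p)-p, A^{\sigma+\eta}n\rangle$ can indeed be written with the exponents $\mu^{-4}$, $\mu^{-6}$ and the multipliers $|\pt u|_\sigma$, $|\pt p|$, $\mu|p|_1^2$ shown on the right of \eqref{thediffin}. This is where the restriction $\eta \le \min\{\tfrac14,\,2-\tfrac\gamma2,\,1-\sigma\}$ is used: the bound $2-\tfrac\gamma2$ controls the growth of $\psi'$ (via (H2.b)), the bound $\tfrac14$ together with $|\phi''(y)|\le c(1+|y|)$ controls the $H^{\sigma+\eta}$-norm of $\phi'(u)-\phi'(p)$ times lower-order factors, and $\sigma+\eta\le 1$ keeps all Sobolev embeddings $H^{1+\sigma+\eta}\hookrightarrow L^r$ in the admissible range for $\R^3$. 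One must also handle the gap between the formal computation and a rigorous one (the solution is only known to be as regular as \eqref{palmiro} provides); as is standard (cf.\ the comment after \eqref{two0} and \cite[Section II.4]{TEM}), the inequalities are first derived for Galerkin approximations and then passed to the limit, so I would state them ``formally'' as the lemma does and defer full rigor to the approximation scheme.
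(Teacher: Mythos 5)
Your plan is correct and follows essentially the same route as the paper: the same two multipliers $A^{\sigma+\eta}\pt n$ and $A^{\sigma+\eta}n$ for $\Lambda_1$ and $\Lambda_2$, the same Young-type absorption of the cross term $2\l\varphi(u)-\phi(p)-p,A^{\sigma+\eta}n\r$ for \eqref{lemma:Q-ctrl1}--\eqref{lemma:Q-8a}, and the same use of (M1) and the choice of $\eps_\omega$ to close \eqref{thediffin}. The only substantive detail you leave implicit is the explicit splitting $\varphi'(u)\pt u-\phi'(p)\pt p=\psi'(u)\pt u+n\,\phi''(\tilde n)\pt u+\phi'(p)\pt n$ (with $\tilde n$ an intermediate value), which is precisely how the paper makes the factor $n$ appear and justifies the exponents $\mu^{-4}$, $\mu^{-6}$ and the multipliers $|\pt u|_\sigma$, $|\pt p|$, $\mu|p|_1^2$; your discussion of where $\eta\leq\min\{\tfrac14,2-\tfrac\gamma2,1-\sigma\}$ enters shows you have the right mechanism in mind.
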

\begin{proof}
With  a slight  abuse of notation, we write hereafter $\Lambda_\imath^{\sigma+\eta}(t):=\Lambda_\imath^{\sigma+\eta}(n(t),\pt n(t);t)$, $\imath=1,2,3$. Let us first establish  \eqref{lemma:Q-ctrl1}-\eqref{lemma:Q-8a}. The bound
\eqref{lemma:Q-ctrl1}
 comes from bounding $\Lambda_1^{\sigma+\eta}$ from below, using 
$$
|\l \varphi(u)- \phi(p), A^{\sigma+\eta} n \r | \leq c(1+|u|_1^3+|p|_1^3)|n|_{1+\sigma+\eta} \leq  c\mu^{-1}(1+|u|_1^6+|p|_1^6) + \frac{\mu}{2}|n|_{1+\sigma+\eta}^2,
$$
and  from \eqref{dependence:2ab}, \eqref{palmiro}, \eqref{SYS-Pa} and \eqref{lemma:Q-1}. 
The bound \eqref{lemma:Q-8a} is an easy consequence of \eqref{lemma:Q-1}.

Now, assuming that $(n,\pt n)$ is   regular enough,  we  multiply the
equation \eqref{SYS-Q}
 by $A^{\sigma+\eta}\pt n$, getting  the differential equation
\begin{equation} \label{lemma:Q-2}
\ddt \Lambda_1^{\sigma+\eta} -\mu'|n |_{1+\sigma+\eta}^2 + 2\omega| \pt n|_{\sigma+\eta}^2   =   - 2\l\pt  p -\varphi' (u)\pt u + \phi' (p)\pt p,  A^{\sigma+\eta}  n \r  .
\end{equation} 
The first term in the right-hand side is easily bounded as follows:
\begin{equation} \label{lemma:Q-3}
 -2 \l \pt p, A^{\sigma+\eta}  n \r \leq c|\pt p||A^{\sigma+\eta} n| \leq c|\pt p| | n|_{1+\sigma+\eta}^{\frac{2(\sigma+\eta)}{1+\sigma+\eta}}|n|^{\frac{1-\sigma-\eta}{1+\sigma+\eta}} \leq c|\pt p| |n|_{1+\sigma+\eta}^{2}.  \end{equation} 
For the remaining part, we write 
\begin{equation} \label{lemma:Q-3a}
\varphi' (u)\pt u- \phi'(p)\pt p = \psi'(u)\pt u +n\phi''(\tilde n) \pt u + \phi'(p)\pt n,
\end{equation} where $\tilde n(x,t)$ is chosen between $u(x,t)$ and $p(x,t)$, and therefore satisfies, due to \eqref{palmiro} and  \eqref{SYS-Pa}
\begin{equation} \label{lemma:Q-4}
\|\tilde n(t) \|^4_{L^4} + \mu(t) | \tilde n(t) |_1^2+ |\pt \tilde n(t)|^2 \leq
c.
\end{equation}  
Using the assumption (H2.b) we estimate
\begin{align}
|\l \psi'(u)\pt u, A^{\sigma}  n \r| & 
 \leq c(1+\|u\|^{\gamma-2}_{L^6})    \|  \pt u\|_{L^{\frac{6}{3-2\sigma}}} \|A^{\sigma+\eta} n \|_{L^{\frac{6}{5-\gamma+2\sigma}}}\nonumber \\ & \leq c(1+|u|_1^{\gamma-1}) |  \pt u|_\sigma | n|_{\sigma+2\eta+\frac{\gamma}2 - 1}\nonumber\\& \leq c(1+\mu^{ \frac{1-\gamma}{2} }) |  \pt u|_\sigma |n|_{1+\sigma+\eta} ; \label{lemma:Q-5}
\end{align} 
here we used the embeddings $H^1  \hookrightarrow L^6(\Omega),$ $H^{\sigma} \hookrightarrow L^{\frac{6}{3-2\sigma}}(\Omega)$, $H^{\frac{\gamma}{2}-1-\sigma} \hookrightarrow L^{\frac{6}{5-\gamma+2\sigma}}(\Omega)$, and  the last line follows by using \eqref{palmiro}.
Regarding the term of \eqref{lemma:Q-3a} containing $\tilde n$,  we write, using the appropriate Sobolev embeddings,
\begin{align}
|\l n\phi''(\tilde n) \pt u, A^{\sigma+\eta} n \r| & \leq 
c\|\phi''(\tilde n)\|_{L^6} \|\pt  u\|_{L^{\frac{6}{3-2\sigma}}} \|n\|_{L^{\frac{6}{1-2\eta}}} \|A^{\sigma+\eta }  n \|_{L^{\frac{6}{1+2(\sigma+\eta)}}} 
 \nonumber \\ & \leq c(1+|\tilde n|_1)|\pt u|_\sigma |n|_{1+\eta}|n|_{1+\sigma+\eta}  \leq c(1+\mu^{-1})|\pt u|_\sigma |n|^2_{1+\sigma+\eta};\label{lemma:Q-6}
\end{align}
we used (H2.a),  \eqref{palmiro} and  \eqref{lemma:Q-1} to obtain the rightmost inequality.
For the last term of  \eqref{lemma:Q-3a} we use  (H2.a) and \eqref{lemma:Q-1}:
\begin{align} \nonumber
|\l \phi'(p) \pt n, A^{\sigma+\eta}  n \r|&  \leq 
c\|p^2\|_{L^3} \|\pt n\|_{L^{\frac{6}{3-2(\sigma+\eta)}}} \|A^{\sigma+\eta }   n \|_{L^{\frac{6}{1+2(\sigma+\eta)}}} 
  \\ & \leq c |p|_1^2 |n|_{1+\sigma+\eta}|\pt n|_{\sigma+\eta}. \label{lemma:Q-7}
\end{align}
In view of \eqref{lemma:Q-3} and \eqref{lemma:Q-5}-\eqref{lemma:Q-7}, the rhs of \eqref{lemma:Q-2} is bounded by
\begin{equation} \label{lemma:Q-8}
  c(1+\mu^{-2})(|\pt u|_\sigma+|\pt p|)(1+|n|_{1+\sigma+\eta}^2) + c |p|_1^2 |n|_{1+\sigma+\eta}|\pt n|_{\sigma+\eta}   . \end{equation}
For the functional $\Lambda_{2}^{\sigma+\eta}$, 
multiplying \eqref{SYS-Q} by $A^{\sigma+\eta} n$ yields \begin{align} \nonumber  &
\ddt \Lambda_2^{\sigma+\eta} +2\mu|n |_{1+\sigma+\eta}^2 - \omega'| \pt n|_{\sigma+\eta}^2   +2\l \varphi(u)-\varphi(p) ,A^{\sigma+\eta} n\r  = 2\l p,A^{\sigma+\eta} n\r  \\ & \quad\leq c|p|_{\sigma+\eta-1}|n|_{1+\sigma+\eta} \leq c\mu^{-1} + \mu|n|_{1+\sigma+\eta}^2,  \label{lemma:Q-9}  \end{align}
making use of \eqref{SYS-Pa} in the last inequality.
Combining \eqref{lemma:Q-2} with  \eqref{lemma:Q-9}, and using the bounds  \eqref{lemma:Q-ctrl1} and \eqref{lemma:Q-8a}     yields \begin{align*}  
&\ddt \Lambda_3^{\sigma+\eta}  + 2\eps_\omega   \Lambda_3^{\sigma+\eta} +  (2\eps_\omega\mu-\mu')|n |_{1+\sigma}^2    \\ & \leq 
 c(1+\mu^{-2})(|\pt u|_\sigma+ |\pt p| +\mu|p|_1^2) \Lambda_3^{\sigma+\eta}   +  c(1+\mu^{-6})(1 +|\pt u|_\sigma+ \mu|p|_1^2). 
 \end{align*} 
The last term on the first line is nonnegative, by \eqref{assmu},   so that \eqref{thediffin} follows. This concludes the proof of the lemma.
\end{proof}
\begin{lemma}
There exists  a continuous positive function $h$  such that \label{lemma:Q}
\begin{equation} \label{lemma:Q-0}
  \|N_z(t,s)\|_{X^{\eta}_t}^2 \leq  h(t,s) \qquad \forall t \in \R, s \leq t,
\end{equation}
with $\eta=2-\frac\gamma 2$.
\end{lemma}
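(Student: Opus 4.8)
The plan is to prove the bound by a finite \emph{bootstrap} on the regularity exponent, built on Lemma~\ref{superlemma}: we increase the parameter $\sigma$ from $0$ up to the value where $\sigma+\eta(\sigma)=2-\frac\gamma2$, reading off at each stage a bound for $N_z$ in $X^{\sigma+\eta}_t$ from the differential inequality \eqref{thediffin} via Gronwall's lemma.

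\emph{Base stage.} We apply Lemma~\ref{superlemma} with $\sigma=0$, so that $\eta=\eta(0)=\min\{\tfrac14,\,2-\tfrac\gamma2\}$. For $\tau\in[s,t]$ the quantities appearing on the right-hand side of \eqref{thediffin} are all under control: $|\pt u(\tau)|_0^2\le\E_{X_\tau}(S(\tau,s)z)\le c_2$ by \eqref{palmiro}, while $|\pt p(\tau)|^2$ and $\mu(\tau)|p(\tau)|_1^2$ are dominated by $\E_{X_\tau}(P_z(\tau,s))\le K_2R_\AA$ via Lemma~\ref{lemma:P}, and the factors $1+\mu(\tau)^{-4}$, $1+\mu(\tau)^{-6}$ are bounded on the compact interval $[s,t]$ because $\mu$ is continuous and strictly positive. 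Hence, along the solution, \eqref{thediffin} takes the form $\ddt\Lambda_3^{\eta}(t)\le g(t)\,\Lambda_3^{\eta}(t)+f(t)$ with $g,f$ continuous and bounded on $[s,t]$. Since $n(s)=\pt n(s)=0$ gives $\Lambda_3^{\eta}(n(s),\pt n(s);s)=0$, Gronwall's lemma yields $\Lambda_3^{\eta}(t)\le\tilde h_1(t,s)$, a continuous positive function; using the lower bounds \eqref{lemma:Q-ctrl1}--\eqref{lemma:Q-8a} (together with $\eps_\omega\le\tfrac1{16}$) to recover $\mu(t)|n(t)|_{1+\eta}^2+|\pt n(t)|_{\eta}^2$, and the bound $\|n(t)\|_{L^4}^2\le c$ from \eqref{lemma:Q-1}, this converts into $\|N_z(t,s)\|_{X^{\eta}_t}^2\le h_1(t,s)$.

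\emph{Iteration.} If $\eta(0)=2-\frac\gamma2$ (i.e.\ $\gamma\ge\frac72$) the base stage already gives the claim; otherwise $\eta(0)=\frac14$ and we re-run the argument with $\sigma=\sigma_1:=\frac14$, then with $\sigma=\sigma_2:=\sigma_1+\eta(\sigma_1)$, and so on. The only genuinely new issue is that for $\sigma>0$ the right-hand side of \eqref{thediffin} carries $|\pt u(t)|_{\sigma}$, a norm not controlled at level $0$. To deal with it we split $\pt u=\pt p+\pt n$ inside the two estimates \eqref{lemma:Q-5}--\eqref{lemma:Q-6} in the proof of Lemma~\ref{superlemma}: for the $\pt n$-contribution we keep $\|\pt n(t)\|_{L^{6/(3-2\sigma)}}\le c|\pt n(t)|_{\sigma}$, which is exactly what the preceding stage controls, while for the $\pt p$-contribution we use only the level-$0$ norm $|\pt p(t)|$ (bounded by Lemma~\ref{lemma:P}), at the cost of the slightly stronger requirement $\sigma+\eta\le 2-\frac\gamma2$, which keeps the Sobolev exponent landing on $n$ no larger than $1+\sigma+\eta$. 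Hence each stage increases the regularity exponent by at least $\min\{\tfrac14,\,2-\tfrac\gamma2-\sigma\}>0$, so after finitely many stages $\sigma+\eta$ reaches $2-\frac\gamma2$ and one obtains $\|N_z(t,s)\|_{X^{2-\gamma/2}_t}^2\le h(t,s)$ with $h$ continuous and positive, as asserted.

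\emph{Main obstacle.} The delicate step is precisely this re-handling of the $\pt u$-terms. It is unavoidable because $p$ solves the damped \emph{hyperbolic} problem \eqref{SYS-P} with the cubic leading nonlinearity $\phi$ and data only in $X_s$: hence $P_z(t,s)$ carries no regularity above $H^1\times H^0$ and $\pt p$ is available only at level $0$. The bootstrap closes only thanks to the \emph{subcritical} growth of the remaining part $\psi$ of the nonlinearity, quantitatively encoded in the constraint $\sigma+\eta\le 2-\frac\gamma2$. A secondary bookkeeping point is that $h(t,s)$ inherits the powers of $\sup_{[s,t]}\mu^{-1}$ coming from the factors $1+\mu^{-4},1+\mu^{-6}$; this is harmless for the present lemma but is exactly the reason why conditions \emph{(M2)--(M3)} are later required to bound $\sup_{s\le t}\|N_z(t,s)\|_{X^{2-\gamma/2}_t}$, and hence to reach Theorem~\ref{thm:reg}.
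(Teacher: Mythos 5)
Your base stage is exactly the paper's proof: a single application of Lemma~\ref{superlemma} with $\sigma=0$, the observation that $|\pt u|+|\pt p|+\mu|p|_1^2\leq c$ by \eqref{palmiro} and Lemma~\ref{lemma:P}, and Gronwall on \eqref{thediffin} starting from $\Lambda_3^{\eta}(s)=0$, followed by \eqref{lemma:Q-ctrl1}--\eqref{lemma:Q-8a} to recover the norm. The paper stops there. Note, however, that a single pass with $\sigma=0$ only yields the exponent $\eta(0)=\min\{\tfrac14,\,2-\tfrac\gamma2\}$, which equals the advertised $2-\tfrac\gamma2$ only when $\gamma\geq\tfrac72$; for $\gamma<\tfrac72$ the paper's argument delivers $X^{1/4}_t$ rather than $X^{2-\gamma/2}_t$. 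This is harmless for the sequel (any positive $\eta$ gives the compact embedding $X^\eta_t\Subset X_t$ needed in Corollary~\ref{cor:attractor}, and the genuine climb to $\sigma=1$ is deferred to Proposition~\ref{bootstrap}, where the splitting of $\pt u$ is handled through the time-integrated quantities \eqref{bootstrap:2}--\eqref{bootstrap:5} rather than pointwise). Your iteration is therefore extra work relative to the paper, but it is sound and in fact more faithful to the literal statement: the decomposition $\pt u=\pt p+\pt n$ with $\pt n$ placed in $H^{\sigma}$ (controlled pointwise by the previous stage via \eqref{lemma:Q-ctrl1}) and $\pt p$ only in $L^2$ does close, and your recomputation of the H\"older/Sobolev exponents in \eqref{lemma:Q-5}--\eqref{lemma:Q-6} correctly identifies $\sigma+\eta\leq 2-\tfrac\gamma2$ as the resulting constraint, so finitely many stages reach the stated exponent. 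Since the present lemma only requires $h(t,s)$ continuous (not uniform in $s$), the loss of uniformity caused by the stage-dependent pointwise bounds on $|\pt n(t)|_\sigma$ is immaterial here. In short: either accept the paper's economical route and weaken the statement to $\eta=\min\{\tfrac14,2-\tfrac\gamma2\}$, which is all that Theorem~\ref{osc:attractor} uses, or keep your bootstrap, which proves the lemma as literally stated at the cost of reworking Lemma~\ref{superlemma}.
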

\begin{proof}
Having at our disposal  \eqref{lemma:Q-1}, the only terms  in   $\|N_{z}(t,s)\|_{X^{\eta}_t}$ we are left to bound are 
$$
\mu(t)|n(t) |_{1+\eta}^2 + | \pt n(t)|_\eta^2.
$$
To this aim, we use Lemma \ref{superlemma} with $\sigma=0$: again we abuse notation and write $\Lambda_3^\eta(t)$ in place of $\Lambda_3^{\eta} (n(t),\pt n(t);t)$ (and sometimes omit the $t$).   We have, from \eqref{palmiro} and Lemma \ref{lemma:P},
$$
|\pt u(t)| + |\pt p(t)|+\mu(t)|p(t)|_1^2 \leq c,
$$
so that 
\eqref{thediffin} reads, for $t \geq s$,
$$
\ddt \Lambda_3^{\eta}   + 2\eps_\omega   \Lambda_3^{\eta}      \leq 
 c\left(1+ \textstyle\frac{1}{\mu^{4}}\right)  \Lambda_3^{ \eta}   +  c\left(1+ \textstyle\frac{1}{\mu^{6}}\right).
$$
Observe that $\Lambda_3^\eta(n(s),\pt n(s);s)=0$, so that   Gronwall's lemma on the interval $(s,t)$  and the controls \eqref{lemma:Q-ctrl1}-\eqref{lemma:Q-8a}  yield
\begin{align} \label{lemma:Q-final}
\mu(t)|n(t) |_{1+\eta}^2 + | \pt n(t)|_\eta^2  &\leq c\Lambda_3(t) +  c\left(1+ \textstyle\frac{1}{\mu(t)^{4}}\right) \\	&\leq  c\int_s^t  \left(1+ \textstyle\frac{1}{\mu(\tau)^{6}}\right)  \e^{c(t-\tau)+\left(   \int_\tau^t \frac{1}{\mu(y)^4}\d y \right)}   \d \tau +\textstyle  c\left(1+ \textstyle\frac{1}{\mu(t)^{4}}\right). \nonumber
\end{align}
 This concludes the proof of Lemma \ref{lemma:Q}, and the function $h(t,s)$ is given by the last line of  
\eqref{lemma:Q-final}.
\end{proof}

We can now complete the proof of the main theorem.
\begin{proof}[Conclusion of the proof of Theorem \ref{osc:attractor}]
Let $\eta = 2-\frac\gamma2>0$ as in Lemma \ref{lemma:Q}. Observe that $X^\eta_t$ is compactly
embedded in $X_t$ and  
each $X^\eta_t$ is a reflexive Banach space, so that closed balls of
$X^\eta_t$ are closed in $X_t$. These considerations ensure that we are
in  position to apply Corollaries \ref{cor:attractor} and
\ref{cor:regularity}.

 Setting $$
\mathfrak{P}(t,s) = \bigcup_{z \in \AA(s)} P_z(t,s)[z], \qquad \mathfrak{N}(t,s) =
\bigcup_{z \in \AA(s)} N_z(t,s)[0],
$$
we have $S(t,s) \AA(s) \subset \mathfrak{P}(t,s)+ \mathfrak{N}(t,s)$. Lemma \ref{lemma:P} gives
$$
\lim_{s\to -\infty} \|\mathfrak{P}(t,s) \|_{X_t} = \lim_{s \to -\infty}
\sup_{z \in \AA(s)} \|P_z(t,s) \|_{X_t} =0,
$$
while Lemma \ref{lemma:Q} shows that $$ \|
\mathfrak{N}(t,s) \|_{X^\eta_t}^2 \leq h(t,s).
$$ Therefore, $\mathfrak{N}(t,s) $ is compact in $X_t$, for every $t \in \R, s \leq t$. Applying Corollary \ref{cor:attractor}, we obtain  the existence of  the unique pullback-bounded global attractor $\A(t)=\omega_\AA(t)$. \end{proof}


\section{Proof of Theorem \ref{thm:reg}} \label{proof:thm:reg}
Before entering the proof of this theorem, in Lemma \ref{rem:addint} we show how (M2) implies additional integrability of the time-derivatives of the solutions of \eqref{SYS}, \eqref{SYS-P} and \eqref{SYS-Q}, in the following lemma.
\begin{lemma} \label{rem:addint} Let $s \in \R$ be fixed, $z \in \A(s)$,   $t_2 \geq t_1 \geq s$, and write $(u(t),\pt u(t))=S(t,s)z.$ We have that
\begin{equation} \label{dissintegr}
\int_{t_1}^{t_2} \omega(\tau) |\pt u (\tau)|^2 \, \d \tau \leq c(1+(t_2-t_1)^\theta). 
\end{equation}
\end{lemma}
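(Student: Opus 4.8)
The plan is to integrate the fundamental energy identity \eqref{one} over $[t_1,t_2]$, isolate the dissipation integral $\int\omega|\pt u|^2$, and estimate the two remaining terms using the attractor bound $\A(t)\subset\AA(t)$ together with assumption (M2).

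First I would record the a priori bounds available on $\A$. Since $z\in\A(s)$, Corollary \ref{cor:uniqueness} gives $S(\tau,s)z\in\AA(\tau)$, i.e.\ $\E_{X_\tau}(u(\tau),\pt u(\tau))\leq R_\AA$, for every $\tau\geq s$. Reading off the four terms of $\E_{X_\tau}$ we obtain $\mu(\tau)|u(\tau)|_1^2\leq c$, $\|u(\tau)\|_{L^q}^q\leq c$, $|u(\tau)|^2\leq c$ and $|\pt u(\tau)|^2\leq c$, hence, by \eqref{bound:V}, $-b_1\leq\V(u(\tau))\leq c$. Consequently the natural energy $\mathcal{E}(\tau):=\mu(\tau)|u(\tau)|_1^2+|\pt u(\tau)|^2+2\V(u(\tau))$ satisfies $-2b_1\leq\mathcal{E}(\tau)\leq c$ for all $\tau\geq s$, with $c$ depending only on the physical parameters.

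Next, integrating \eqref{one} between $t_1$ and $t_2$ — the identity holding for smooth solutions and, for the weak solutions produced by the Galerkin scheme, at least in the dissipative form $\frac{\d}{\d t}\mathcal{E}-\mu'|u|_1^2+2\omega|\pt u|^2\leq0$, which is all we need — yields
$$2\int_{t_1}^{t_2}\omega(\tau)|\pt u(\tau)|^2\,\d\tau\;\leq\;\mathcal{E}(t_1)-\mathcal{E}(t_2)+\int_{t_1}^{t_2}\mu'(\tau)|u(\tau)|_1^2\,\d\tau\;\leq\;c+\int_{t_1}^{t_2}(\mu')_+(\tau)|u(\tau)|_1^2\,\d\tau,$$
where I used $\mathcal{E}(t_1)\leq c$, $-\mathcal{E}(t_2)\leq 2b_1$ and discarded the negative part of $\mu'$. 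Finally, bounding $|u(\tau)|_1^2=\mu(\tau)^{-1}\big(\mu(\tau)|u(\tau)|_1^2\big)\leq c\,\mu(\tau)^{-1}$ and invoking \eqref{pospart},
$$\int_{t_1}^{t_2}(\mu')_+(\tau)|u(\tau)|_1^2\,\d\tau\;\leq\;c\int_{t_1}^{t_2}\frac{(\mu')_+(\tau)}{\mu(\tau)}\,\d\tau\;\leq\;cC\big(1+(t_2-t_1)^\theta\big),$$
which, combined with the previous display, gives \eqref{dissintegr}.

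The argument is essentially routine; the only delicate point is the passage from the formal energy computation \eqref{one} to a rigorous bound for the weak solutions lying on the attractor, since \eqref{one} was derived assuming enough regularity. This is handled in the standard way — work with Galerkin approximants and pass to the limit (cf.\ \cite[Section II.4]{TEM}) — and, crucially, we only ever use the $\leq$ direction of the energy balance, so no regularity beyond that of weak solutions is required. The real content of the lemma is simply that hypothesis (M2) is precisely what is needed to absorb the ``wrong-sign'' term $\int\mu'|u|_1^2$ in the energy balance into a sublinear power of $t_2-t_1$.
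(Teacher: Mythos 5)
Your proof is correct and follows essentially the same route as the paper: integrate the energy identity \eqref{one} over $[t_1,t_2]$, control the boundary terms via the uniform bound $\E_{X_\tau}(S(\tau,s)z)\leq c$ on the attractor, and absorb $\int_{t_1}^{t_2}(\mu')_+|u|_1^2$ by writing $(\mu')_+|u|_1^2\leq\big(\sup_\tau\mu(\tau)|u(\tau)|_1^2\big)\,(\mu')_+/\mu$ and invoking \eqref{pospart}. Your remarks on the weak-solution justification and on the role of (M2) are accurate but add nothing beyond what the paper does implicitly.
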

\begin{proof}
Going back to the proof of \eqref{two0} and integrating \eqref{one}  between $t_1$ and $t_2$, we obtain
\begin{equation} \label{dissintegrA}
\int_{t_1}^{t_2} \omega(\tau) |\pt u (\tau)|^2 \, \d \tau \leq c + \int_{t_1}^{t_2} \mu'(\tau)|u(\tau)|_1^2. \end{equation}
 Taking advantage of \eqref{bound}  and (M2) in the last inequality, we have that
\begin{align*}
 \int_{t_1}^{t_2} \mu'(\tau)|u(\tau)|_1^2 \,\d \tau&\leq \int_{t_1}^{t_2} {(\mu')}_+(\tau)  |u(\tau)|_1^2 \, \d \tau
 \leq    \left(\sup_{\tau \in [t_1,t_2]}\mu (\tau)|u(\tau)|_1^2\right)\int_{t_1}^{t_2} \frac{{(\mu')}_+(\tau)}{\mu(\tau)} \, \d\tau\\ & \leq c(1+(t_2-t_1)^\theta).
\end{align*}
This last inequality, in light of \eqref{dissintegrA}, completes the proof of the lemma.
\end{proof}
\begin{remark} \label{remarkY}
As promised, we show how the conditions described in Remark \ref{rem:M2} imply \eqref{pospart}. Under the assumption that $\mu$ is decreasing on finitely many intervals $(t_i^\ell,t_i^r)$, and $t_Z^\ell >-\infty$, we have, for each $t\leq t_Z^r$,
\begin{equation} \label{remarkY:1}
\int_{t}^{+\infty} \frac{{(\mu')}_+(\tau)}{\mu(\tau)} \, \d\tau \leq \sum_{i=1}^Z \big(\log( \mu(t_i^r))-\log({\mu(t_i^\ell)})\big):= C_\mu <\infty,
\end{equation}  
which clearly suffices for (M2). In the case $t_Z^\ell=-\infty$, in view of \eqref{remarkY:1}, it is enough to observe that, when $t_1<t_2\leq t_Z^r,$
$$
\int_{t_1}^{t_2} \frac{{(\mu')}_+(\tau)}{\mu(\tau)} \, \d\tau = \log( \mu(t_2))-\log({\mu(t_1}) \leq c(t_2-t_1)^\theta,
$$
thanks to   condition \eqref{M21}.

We now show how (M2) follows from the \emph{oscillating behavior at $s\to -\infty$} assumptions. Remember that $\mu$ is increasing on an infinite sequence of intervals  $(t_{i}^{\ell}, t_{i}^{r} )$,  $i=1,\ldots,\infty$, $t_{i+1}^r <t_{i}^\ell$, for which \eqref{M21} holds, and the inverses of $T_i=t_i^r-t_i^\ell$ are summable, with $\sum \frac{1}{T_i}=B$.
Let $t_1 \leq t_2$ be fixed and let us consider the $i$'s (if any) for which $(t_i^\ell,t_i^r)$ intersects $(t_1,t_2)$; say $i=j,\ldots,j+k$. We can assume that $t_1<t_{j+k}^\ell<t_j^r < t_2$, the other cases being treated likewise.  By explicit computation and a subsequent use of \eqref{M21},
$$
\int_{t_1}^{t_2} { \frac{(\mu')_+(\tau)}{\mu(\tau)}}\, \d \tau = \sum_{i=j}^{j+k} \log \Big( \textstyle\frac{\mu(t_i^r)}{\mu(t_i^\ell)}\Big)  \leq c\displaystyle\sum_{i=j}^{j+k} (t_i^r-t_i^\ell)^\vartheta.
$$
Therefore, it suffices to show that, for $ \theta=\frac{1+\vartheta}{2}<1$, we have  
$$
\sum_{i=j}^{j+k} (t_i^r-t_i^\ell)^\vartheta\leq   cB^{\frac{1-\vartheta}{2}}(t_2-t_1)^{\theta}.
$$
Using H\"older's inequality with $p=\frac{1}{\theta}$, $p'= \frac{2}{1-\vartheta}$, we have that 
\begin{align*}
\sum_{i=j}^{j+k} (t_i^r-t_i^\ell)^\vartheta & = \sum_{i=j}^{j+k} (t_i^r-t_i^\ell)^{\theta} (T_i)^{\frac{\vartheta-1}{2}}
\\&  \leq 
\bigg(\sum_{i=j}^{j+k} (t_i^r-t_i^\ell) \bigg)^{\theta} \bigg(\sum_{i=1}^{\infty} (T_i)^{\frac{\vartheta-1}{2} \frac{2}{1-\vartheta} } \bigg)^{\frac{1-\vartheta}{2}} \leq cB^{\frac{1-\vartheta}{2}}(t_2-t_1)^{\theta},
\end{align*}
which is the estimate we were looking for. \end{remark}

The main tool of the proof of Theorem \ref{thm:reg} is the bootstrap scheme devised in the proposition below. The idea of relying on  the  time-integrability of the time-derivatives of the solution to obtain regularization effects is fairly common in the literature,  see for example \cite{DP-Hyp,DPZ,GP}. The main novelty of our construction lies in showing that the additional time-integrability is preserved for  a very large class of time-dependent coefficients. As before  in Lemma \ref{superlemma}, for a given $\sigma \in [0,1]$,  we let $$\eta(\sigma)=\textstyle\min\{\frac14, 2-\frac\gamma 2, 1-\sigma\}.$$   From now on, we will always consider initial data $z \in \A(s) $ and write   $S(t,s)z=(u(t),\pt u(t))$. The generic constants $c$ appearing in the proof below will (possibly) depend on $$\sup_{t\in \R}\|\A(t)\|_{X_t} \leq R_\AA,$$ and on the physical parameters of the problem.

\begin{proposition} \label{bootstrap}
Let $\sigma \in [ 0,1]$ be given. Suppose that  there exists a positive increasing  function $$\I^{\mathsf{in}}:\R \to \R,\qquad\I^{\mathsf{in}}(t) \geq c(1+\cic{\nu}(t)^8),   $$ where $\cic{\nu}$ is defined in \eqref{cicnu}, and such that for every $s \in \R$, $z \in \A(s)$, 
\begin{equation}
\label{bootstrap:1}
\|(u(t),\pt u(t))\|_{X^\sigma_t} \leq \I^{\mathsf{in}}(t) \qquad \forall t\geq s.
\end{equation}
Then the solution $P_z(t,s)=(p(t),\pt p(t))$ of  \eqref{SYS-P} satisfies 
\begin{equation}
\label{bootstrap:3}
\int_{t_1}^{t_2} |\pt p(\tau)|_\sigma^2\, \d \tau \leq \J^{\mathsf{out}}(t)\big(1+(t_2-t_1)^{\theta}\big)\qquad  \forall s \leq t_1 \leq t_2 \leq t,
\end{equation}
for some  positive increasing  function$$\J^{\mathsf{out}}:\R \to \R,\qquad\J^{\mathsf{out}}(t) \geq c(1+\cic{\nu}(t)^8).$$ 
In addition to \eqref{bootstrap:1}, suppose that    there exists a positive increasing  function $$\J^{\mathsf{in}}:\R \to \R,\qquad\J^{\mathsf{in}}(t) \geq c(1+\cic{\nu}(t)^8),   $$ such that
\begin{equation}
\label{bootstrap:2}
\int_{t_1}^{t_2} |\pt u(\tau)|_\sigma^2\, \d \tau \leq \J^{\mathsf{in}}(t) \big(1+(t_2-t_1)^{\beta_{\mathsf{in}}}\big)\qquad  \forall s \leq t_1 \leq t_2 \leq t,
\end{equation}
for some   $\beta_{\mathsf{in}}\in [0,1)$.
Then  there exist  positive  increasing functions 
$$\I^{\mathsf{out}}, \K^{\mathsf{out}}:\R \to \R,\qquad\I^{\mathsf{out}}(t),\K^{\mathsf{out}}(t)\geq c(1+\cic{\nu}(t)^8),   $$  such that the solution $N_z(t,s)=(n(t),\pt n(t))$ of \eqref{SYS-Q} satisfies
\begin{equation}
\label{bootstrap:4}
\|(n(t),\pt n(t))\|_{X^{\sigma+\eta(\sigma)}_t} \leq \I^{\mathsf{out}}(t) \qquad \forall t\geq s,
\end{equation}
and
\begin{equation}
\label{bootstrap:5}
\int_{t_1}^{t_2} |\pt n(\tau)|_{\sigma+\eta(\sigma)}^2\, \d \tau \leq \K^{\mathsf{out}} (t)\big(1+(t_2-t_1)^{\beta_{\mathsf{out}}}\big)\qquad  \forall s \leq t_1 \leq t_2 \leq t,
\end{equation}
with $\beta_{\mathsf{out}}=\max\left\{\frac{1+\beta_{\mathsf{in}}}{2},\theta \right\} \in [0,1)$.
\end{proposition}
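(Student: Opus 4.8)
The plan is to establish the three conclusions of the proposition in the order in which they are stated — the dissipation estimate \eqref{bootstrap:3} for the auxiliary flow $P_z$, the regularity estimate \eqref{bootstrap:4} for $N_z$, and the dissipation estimate \eqref{bootstrap:5} for $N_z$ — each one feeding into the next.

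For \eqref{bootstrap:3} I would first record a \emph{companion} bound for \eqref{SYS-P}: since \eqref{bootstrap:1} evaluated at $t=s$ gives $\|z\|_{X^\sigma_s}\le\I^{\mathsf{in}}(s)$, repeating at level $\sigma$ the scheme of the proof of \eqref{two0}/Lemma \ref{lemma:P} — multiplying \eqref{SYS-P} by $A^\sigma\pt p$ and by $A^\sigma p$ and combining with weight $2\eps_\omega$ — yields $\sup_{\tau\le t}\big(\mu(\tau)|p(\tau)|_{1+\sigma}^2+|\pt p(\tau)|_\sigma^2\big)\le c(1+\cic{\nu}(t)^8)$. As in \eqref{two0} the term $\mu'$ enters favourably, through a nonnegative $(2\eps_\omega\mu-\mu')|p|_{1+\sigma}^2$ guaranteed by (M1); the only delicate point is the cubic part of $\phi(p)$, whose contribution is $\lesssim|p(\tau)|_1^2|p(\tau)|_{1+\sigma}|\pt p(\tau)|_\sigma$ and, after Young's inequality, produces a Gronwall coefficient proportional to $|p(\tau)|_1^4$, which by Lemma \ref{lemma:P} decays exponentially and is therefore time-integrable uniformly in $s$, so the Gronwall exponent stays bounded despite the possible degeneracy of $\omega$ at $+\infty$. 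With this in hand I would multiply \eqref{SYS-P} by $A^\sigma\pt p$ and integrate over $[t_1,t_2]$, producing $\int_{t_1}^{t_2}\omega|\pt p|_\sigma^2$ on the left; on the right the boundary terms are controlled by the companion bound, the nonlinear term $\int\langle\phi'(p)\pt p,A^\sigma p\rangle$ is a boundary term (its linear part) plus a piece absorbed into $\int\omega|\pt p|_\sigma^2$ with a uniformly bounded remainder (by the decay of $|p|_1$), and $\int_{t_1}^{t_2}\mu'|p|_{1+\sigma}^2\le\big(\sup_{[t_1,t_2]}\mu|p|_{1+\sigma}^2\big)\int_{t_1}^{t_2}(\mu')_+/\mu\le c(t)\,C(1+(t_2-t_1)^\theta)$ by (M2) — the mechanism of Lemma \ref{rem:addint}. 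Since $\omega(\tau)\ge\omega(t)>0$ on $(-\infty,t]$, dividing by $\omega(t)$ turns $\int\omega|\pt p|_\sigma^2$ into $\int|\pt p|_\sigma^2$ and gives \eqref{bootstrap:3} with exponent $\theta$.

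For \eqref{bootstrap:4} the engine is the differential inequality \eqref{thediffin} of Lemma \ref{superlemma}, applied with the given $\sigma$ and $\eta=\eta(\sigma)$; write its first right-hand bracket times $c(1+\mu^{-4})$ as $g_1(\tau)$ and the second times $c(1+\mu^{-6})$ as $g_2(\tau)$. Since $n(s)=\pt n(s)=0$ we have $\Lambda_3^{\sigma+\eta}(s)=0$, so Gronwall gives $\Lambda_3^{\sigma+\eta}(t)\le\int_s^t g_2(\tau)\exp\!\big(\int_\tau^t(g_1-2\eps_\omega)\big)\,d\tau$. The crucial point is that $\int_\tau^t g_1$ grows only \emph{sublinearly} in $t-\tau$: its $|\pt u|_\sigma$-part is bounded via Cauchy--Schwarz and \eqref{bootstrap:2} by $c(t)\big((t-\tau)^{1/2}+(t-\tau)^{(1+\beta_{\mathsf{in}})/2}\big)$, while its $|\pt p|$- and $\mu|p|_1^2$-parts are \emph{bounded} by integrating the exponential decay of Lemma \ref{lemma:P}. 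Since $\int_\tau^t(-2\eps_\omega)\le-2\eps_\omega(t)(t-\tau)$ ($\omega$ decreasing) and $\tfrac12,\beta_{\mathsf{in}}<1$, the exponent is bounded above by $F(t-\tau;t)$ with $F(x;t)\le c(t)\big(x^{1/2}+x^{(1+\beta_{\mathsf{in}})/2}\big)-2\eps_\omega(t)x$, so $x\mapsto e^{F(x;t)}$ is bounded and integrable on $[0,\infty)$. Bounding $g_2(\tau)\le c(t)(1+|\pt u(\tau)|_\sigma)$ (again using decay for the $\mu|p|_1^2$-term), the constant part integrates to something finite, and for $\int_s^t|\pt u(\tau)|_\sigma\,e^{F(t-\tau;t)}\,d\tau$ I would apply Cauchy--Schwarz and then integrate by parts in $\tau$ against $G(\tau)=\int_\tau^t|\pt u|_\sigma^2\le\J^{\mathsf{in}}(t)(1+(t-\tau)^{\beta_{\mathsf{in}}})$ from \eqref{bootstrap:2}, so that the exponential decay of the weight beats the polynomial growth of $G$ and the bound is uniform in $s$. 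Plugging into \eqref{lemma:Q-ctrl1}--\eqref{lemma:Q-8a} and using $\|n\|_{L^q}\le c$ from \eqref{lemma:Q-1} yields \eqref{bootstrap:4}.

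Finally, for \eqref{bootstrap:5} I would integrate the identity \eqref{lemma:Q-2} for $\Lambda_1^{\sigma+\eta}$ over $[t_1,t_2]$: the boundary terms $\Lambda_1^{\sigma+\eta}(t_i)$ are controlled by \eqref{bootstrap:4} via \eqref{lemma:Q-ctrl1}; the term $\int\mu'|n|_{1+\sigma+\eta}^2\le\big(\sup\mu|n|_{1+\sigma+\eta}^2\big)\int(\mu')_+/\mu\le\I^{\mathsf{out}}(t)^2\,C(1+(t_2-t_1)^\theta)$ by (M2), producing the $\theta$ alternative in $\beta_{\mathsf{out}}$; and the forcing term, bounded as in \eqref{lemma:Q-3} and \eqref{lemma:Q-5}--\eqref{lemma:Q-8}, splits into a piece $\lesssim|p|_1^2|n|_{1+\sigma+\eta}|\pt n|_{\sigma+\eta}$ absorbed into $\int\omega|\pt n|_{\sigma+\eta}^2$ (uniformly bounded leftover thanks to $|p|_1^2$) and a piece $\lesssim(1+\mu^{-2})(|\pt u|_\sigma+|\pt p|)(1+|n|_{1+\sigma+\eta}^2)$, whose time-integral is bounded via Cauchy--Schwarz and \eqref{bootstrap:2} by $c(t)(1+(t_2-t_1)^{(1+\beta_{\mathsf{in}})/2})$ plus bounded $|\pt p|$-terms, producing the $\tfrac{1+\beta_{\mathsf{in}}}{2}$ alternative. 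Dividing by $\omega(t)>0$ gives \eqref{bootstrap:5} with $\beta_{\mathsf{out}}=\max\{\tfrac{1+\beta_{\mathsf{in}}}{2},\theta\}$. Throughout, the stated lower bounds $\ge c(1+\cic{\nu}(t)^8)$ for $\I^{\mathsf{out}},\J^{\mathsf{out}},\K^{\mathsf{out}}$ are free (one simply enlarges the constructed functions), and monotonicity in $t$ follows since $\cic{\nu}$, $1/\eps_\omega$, $1/\omega$, $\I^{\mathsf{in}}$ and $\J^{\mathsf{in}}$ are non-decreasing. I expect the genuine obstacles to be precisely (a) the level-$\sigma$ companion bound for \eqref{SYS-P}, where one must see that the critical cubic term only contributes the exponentially decaying coefficient $|p(\tau)|_1^4$, and (b) the uniform-in-$s$ control of $\int_s^t|\pt u|_\sigma\,e^{F(t-\tau;t)}\,d\tau$ in \eqref{bootstrap:4}, which rests on the linear decay $-2\eps_\omega(t)(t-\tau)$ dominating the sublinear growth of $\int_\tau^t g_1$ furnished by \eqref{bootstrap:2}; everything else is a careful bookkeeping of the Sobolev estimates already carried out in Lemma \ref{superlemma}, together with the exponential decay of $P_z$ and condition (M2).
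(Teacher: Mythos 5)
Your proposal is correct and follows essentially the same route as the paper: a level-$\sigma$ energy estimate for \eqref{SYS-P} whose Gronwall coefficient is integrable thanks to the exponential decay of Lemma \ref{lemma:P}, then integration of the $A^\sigma\pt p$ (resp.\ $A^{\sigma+\eta}\pt n$) identity over $[t_1,t_2]$ with the $\mu'$-term controlled by (M2) as in Lemma \ref{rem:addint}, and the differential inequality \eqref{thediffin} closed by a Gronwall argument in which the sublinear growth $(t-\tau)^{(1+\beta_{\mathsf{in}})/2}$ of $\int g$ (from Cauchy--Schwarz and \eqref{bootstrap:2}) is beaten by the linear decay $-2\eps_\omega(t_0)(t-\tau)$. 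The only cosmetic difference is that you re-derive inline the Gronwall-type Lemma \ref{techlemma} which the paper imports from \cite{GP2}, and you use \eqref{bootstrap:2} where the paper uses the pointwise bound \eqref{bootstrap:1} to control the forcing term; both are valid.
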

We postpone the proof of Proposition \ref{bootstrap} to the end of the  section, and now show how Theorem \ref{thm:reg} follows from this proposition.
\begin{proof}[Proof that Proposition \ref{bootstrap} implies Theorem \ref{thm:reg}]
 We will construct a  finite sequence of $\sigma_i$'s: $$0=\sigma_0\leq\sigma_1 \leq \cdots\leq \sigma_\kappa=1, \qquad \sigma_{i+1} =\sigma_i + \eta(\sigma_i), \; i=0,\ldots,\kappa-1,$$ and increasing functions
$$
 \I_{ i}, \J_{ i}:\R \to \R,\qquad\I_{i}(t), \J_{ i} (t)\geq c(1+\cic{\nu}(t)^8), 
$$ such that, for every $s \in \R$, $z \in \A(s)$,
\begin{equation}
\label{bootproof:1} \tag*{(A)$_i$}
\|(u(t),\pt u(t))\|_{X^{\sigma_i}_t} \leq \I_{ i} (t) \qquad \forall t\geq s,
\end{equation}
and
\begin{equation}
\label{bootproof:2} \tag*{(B)$_i$}
\int_{t_1}^{t_2} |\pt u(\tau)|_{\sigma}^2\, \d \tau \leq \J_{i}(t)\big(1+(t_2-t_1)^{\beta_{i}}\big)\qquad  \forall s \leq t_1 \leq t_2 \leq t.
\end{equation}
Note that the number $\kappa$ of steps to go from $\sigma_0=0$ to $\sigma_\kappa=1$ depends only on $\gamma$ in (H2.b) and is always finite. Also, the statement of Theorem \ref{thm:reg} follows from (A)$_\kappa$ and from the invariance of the attractor; the function $h_1$ can be taken equal to $\I_\kappa$ appearing in (A)$_\kappa$.

We now explain how to perform the construction inductively. The base case, that is,   \eqref{bootstrap:1} and \eqref{bootstrap:2} holding true  for $\sigma=0$, follows from \eqref{bootstrap:start} and from \eqref{dissintegr}, with $$\I_0(t)=\J_0(t)=c \max \big\{1, \frac{1}{\omega(t)}, \cic{\nu}(t)^8 \big\}, \qquad\beta_0=\theta.$$

For the induction step, we start by assuming that   \ref{bootproof:1} and \ref{bootproof:2} hold true.    Let then $t$ be fixed and $z \in \A(t)$. Let $z_k \in \A(t-k)$, $k \in \mathbb{N}$,  be chosen so that $S(t,t-k)z_k=z$. The inductive hypotheses show that \eqref{bootstrap:1}-\eqref{bootstrap:2} hold with $\I^{\mathsf{in}}=\I_{i}$, $\J^{\mathsf{in}}=\J_{i}$ so that we can apply Proposition \ref{bootstrap} to $z_k$. We read from  \eqref{bootstrap:4} that   $$\|n_k:=N_{z_k}(t,t-k) \|_{X^{\sigma_{i+1}}} \leq  \I^{\mathsf{out}}(t);$$ due to the compact embedding $X^{\sigma_{i+1}}_t \Subset X^{\sigma_i}_t,$ and to the fact that closed balls of $X^{\sigma_{i+1}}_t $ are closed in $X^{\sigma_i}_t$, $n_k$ has an $X^{\sigma_i}_t$-limit point  $\bar n$ with $\|\bar n\|_{X^{\sigma_{i+1}}_t} \leq \I^{\mathsf{out}}(t)$. However, we have $z=n_k + P_{z_k}(t,t-k)$, and we know from Lemma \ref{lemma:P} that $P_{z_k}(t,t-k) \to 0$ in $X^{\sigma_i}_t$  as $k \to \infty$. We conclude that $z=\bar n$, and therefore $\|z\|_{X^{\sigma_{i+1}}_t} \leq \I^{\mathsf{out}}(t)$. At this point, (A)$_{i+1}$, with $\I_{i+1}(t):=\I^{\mathsf{out}}(t)$, follows  from the invariance of the attractor. In consequence of  (A)$_{i+1}$, we can apply Proposition \ref{bootstrap} with $\sigma_{i+1}$ in place of $\sigma$, $\I_{i+1}(t)$ in place of $\I^{\mathsf{in}}$, and  we also obtain \eqref{bootstrap:3} with $\sigma_{i+1}$ and $\beta_{i+1}$, i.e.
$$
\int_{t_1}^{t_2} |\pt p(\tau)|^2_{\sigma_{i+1}} \, \d \tau \leq \J^{\mathsf{out}_2}(t)   (1+(t_2-t_1)^{\theta}), \qquad  \forall s \leq t_1 \leq t_2 \leq t;
$$
we wrote $\J^{\mathsf{out}_2}$ instead of $\J^{\mathsf{out}}$ to mark   that this output comes from a second application of the proposition. 
We also had from the previous application of the proposition that
$$
\int_{t_1}^{t_2} |\pt n(\tau)|_{\sigma_{i+1}}^2\, \d \tau \leq \K^{\mathsf{out}}(t)\big(1+(t_2-t_1)^{\beta_{\mathsf{out}}}\big)\qquad  \forall s \leq t_1 \leq t_2 \leq t;
$$
(B)$_{i+1}$ now follows by combining the last two bounds, and setting $\J_{i+1}=\K^{\mathsf{out}}+\J^{\mathsf{out}_2}$.  The induction step is complete, and so is the proof that Proposition \ref{bootstrap} implies Theorem \ref{thm:reg}.\end{proof}

\subsection*{Proof of Proposition \ref{bootstrap}}
We  need the following Gronwall-type lemma, adapted {from \cite{GP2}}, which we refer for the proof.
\begin{lemma} Let $\Phi$ be an absolutely continuous positive function on $[s,t_0]$, satisfying the differential inequality
$$
\ddt \Phi(t) + 2\eps \Phi(t) \leq g(t)\Phi(t) + f(t), \qquad \mathrm{a.e.}\; t \in[ s , t_0], 
$$ 
for some $\eps>0$ and where $f,g$ are positive functions on $[s,t_0]$ satisfying
\begin{align} \label{lemmetto:1}
\int_{t}^{\min\{t_0,t+1\}} f(\tau) \, \d \tau \leq F,  &\qquad \forall t \in [s,t_0],\\   \int_{t_1}^{t_2} g(\tau)\,\d \tau\leq G(1+(t_2-t_1)^\beta), & \qquad \forall s \leq t_1 \leq t_2 \leq t_0, \nonumber
\end{align}
for some positive constants $F,G$ and some $\beta\in [0,1)$. Then,
\begin{equation} \label{lemmetto:2}
\Phi(t_0) \leq \Gamma \Phi(s) \e^{-\eps(t_0-s)} + \Theta,
\end{equation}
where $\Gamma=\Gamma(G,\beta,\eps) \geq 1$ is computed explicitly in the proof and $\Theta=\Gamma F \frac{  \e^{\eps}}{1-\e^{-\eps}}$. \label{techlemma}
\end{lemma}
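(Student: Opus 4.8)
The plan is to run the classical integrating‑factor form of Gronwall's inequality and then show that the sub‑linear growth of $\int g$ (encoded by the exponent $\beta<1$) gets absorbed into the linear decay coming from the $2\eps$ term. First I would introduce the integrating factor $\rho(t)=\exp\!\big(2\eps(t-s)-\int_s^t g(\sigma)\,\d\sigma\big)$, which satisfies $\rho(s)=1$; from the differential inequality one gets $\ddt(\rho\Phi)\leq \rho f$ a.e.\ on $[s,t_0]$, and since $\Phi$ is absolutely continuous and $f,g$ are integrable, $\rho\Phi$ is absolutely continuous, so integrating from $s$ to $t_0$ yields
$$\Phi(t_0)\;\leq\;\Phi(s)\,\e^{-2\eps(t_0-s)+\int_s^{t_0}g}\;+\;\int_s^{t_0}f(\tau)\,\e^{-2\eps(t_0-\tau)+\int_\tau^{t_0}g}\,\d\tau.$$

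The crucial point is an elementary observation: by the second bound in \eqref{lemmetto:1}, for every $\tau\in[s,t_0]$ one has $\int_\tau^{t_0}g\leq G\big(1+(t_0-\tau)^\beta\big)$, and since $\beta<1$ the function $r\mapsto G(1+r^\beta)-\eps r$ is bounded from above on $[0,\infty)$ (it is continuous and tends to $-\infty$ as $r\to\infty$), with maximum attained at $r_\star=(G\beta/\eps)^{1/(1-\beta)}$. Hence, setting $\Gamma:=\exp\!\big(\sup_{r\geq0}\{G(1+r^\beta)-\eps r\}\big)\geq1$ — a quantity depending only on $G,\beta,\eps$ and computable in closed form — we obtain the pointwise estimate $\e^{-2\eps(t_0-\tau)+\int_\tau^{t_0}g}\leq\Gamma\,\e^{-\eps(t_0-\tau)}$ for all $\tau\in[s,t_0]$. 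Inserting this into the displayed inequality controls the first term by $\Gamma\Phi(s)\e^{-\eps(t_0-s)}$, which is exactly the first summand in \eqref{lemmetto:2}, and bounds the forcing contribution by $\Gamma\int_s^{t_0}f(\tau)\e^{-\eps(t_0-\tau)}\,\d\tau$.

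It then remains to estimate $\int_s^{t_0}f(\tau)\e^{-\eps(t_0-\tau)}\,\d\tau$ using the first bound in \eqref{lemmetto:1}. I would partition $[s,t_0]$ into the unit‑length intervals $[t_0-k,\,t_0-k+1]$, $k=1,2,\dots$ (the leftmost one possibly truncated at $s$), on each of which $\e^{-\eps(t_0-\tau)}\leq\e^{-\eps(k-1)}$ while $\int f\leq F$ by hypothesis; summing the resulting geometric series bounds the integral by a fixed multiple of $F$, giving $\Theta=\Gamma F\,\frac{\e^{\eps}}{1-\e^{-\eps}}$ as in \eqref{lemmetto:2}. The only genuinely delicate point is the absorption step: one must retain the \emph{full} decay factor $\e^{-\eps(t_0-\tau)}$ after swallowing $\int_\tau^{t_0}g$ — mere boundedness of the exponential would not suffice, since it is precisely this residual decay that makes the geometric sum for the forcing term converge — and this is exactly where the hypothesis $\beta<1$ enters; the integrating‑factor manipulation and the measure‑theoretic bookkeeping for absolutely continuous $\Phi$ are routine.
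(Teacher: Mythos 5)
Your proof is correct; the paper itself omits the argument (deferring to \cite{GP2}), and your integrating-factor computation --- with the sublinear growth $G(1+r^\beta)-\eps r$, $\beta<1$, absorbed into half of the exponential decay to produce $\Gamma=\exp\bigl(\sup_{r\geq0}\{G(1+r^\beta)-\eps r\}\bigr)$, and the forcing term summed over unit intervals using the local bound on $\int f$ --- is exactly the standard route for this Gronwall-type lemma. The only (harmless) discrepancy is that your geometric sum actually yields the sharper constant $\Gamma F/(1-\e^{-\eps})$, which of course implies the stated bound with $\Theta=\Gamma F\,\e^{\eps}/(1-\e^{-\eps})$.
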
\noindent
\begin{proof}[Proof that \eqref{bootstrap:1}-\eqref{bootstrap:2} imply \eqref{bootstrap:4}-\eqref{bootstrap:5}]
The initial time $s \in \R$ and the final time  $t_0 \geq s$ are fixed. As usual, we refer  to Lemma \ref{superlemma}, and write $\Lambda_3^\eta(t)$ in place of $\Lambda_3^{\eta} (n(t),\pt n(t);t)$.  
Setting for brevity 
$$
g_1 =c\left(1+ \nu^{4}\right) |\pt u |_\sigma, \; g_2  = c\left(1+ \nu^{4}\right)\left[|\pt p| + \mu|p|_1^2\right], \; f_1 = c\left(1+ \nu^{6}\right)\big[1 +|\pt u|_\sigma+ \mu|p|_1^2\big],
$$
we rewrite \eqref{thediffin} as
\begin{equation} \label{thediffin-here}
\ddt \Lambda_3^{\sigma+\eta}(t)   + 2\eps_\omega(t_0)   \Lambda_3^{\sigma+\eta}(t)     +\omega(t) |\pt n(t)|^2_\sigma  \leq 
 (g_1(t)+g_2(t))\Lambda_3^{\sigma+\eta} (t)+ f_1(t). \end{equation}
We use our assumption \eqref{bootstrap:2} and condition (M3)  to estimate
\begin{align*}
\int_{t_1}^{t_2} g_1(\tau)\,\d \tau 
& \leq c\left( \int_{t_1}^{t_2} \left(  1+ \nu^{8}(\tau)\right) \,\d \tau \right)^{\frac12} \left( \int_{t_1}^{t_2} |\pt u(\tau)|^2_\sigma \, \d \tau  \right)^{\frac12} \\
& \leq  c(1+\cic{\nu}(t_0)^{4} \J^{\mathsf{in}}(t_0)^{\frac12})\left( 1 +(t_2-t_1)^{\zeta} \right),  
\end{align*}
where we set $ \zeta = \frac{\beta_{\mathsf{in}}+1}{2} <1.$ Thanks to the exponential decay resulting from Lemma \ref{lemma:P}, we have  $$
\int_{s}^{t_0} [|\pt p(\tau)| +\mu(\tau)|p(\tau)|^2_1 ]\,\d \tau
\leq c,$$  so that
\begin{align*}
\int_{t_1}^{t_2} g_2(\tau)\,\d \tau 
& \leq c\left( \int_{t_1}^{t_2} \left(  1+ \nu^{8}(\tau)\right) \,\d \tau \right)^{\frac12} \left( \int_{t_1}^{t_2}\big[ |\pt p(\tau)| + \mu(\tau)|p(\tau)|^2_1\big] \, \d \tau  \right)^{\frac12} \\
& \leq  c(1+\cic{\nu}(t_0)^{4})\left( 1 +(t_2-t_1)^{\frac12} \right). 
\end{align*}
Finally, we use \eqref{bootstrap:1} to control $|\pt u(t)|_\sigma$ pointwise in $t$, obtaining
$$
\int_t^{\min\{t_0,t+1\}} f_1(\tau) \, \d \tau \leq c(1+  \I^{\mathsf{in}}(t_0)^{\frac12})\int_{t}^{\min\{t_0,t+1\}} \left(  1+ \nu^{6}(\tau)\right)\,\d \tau  \leq c(1+  \I^{\mathsf{in}}(t_0)^{\frac12}\cic{\nu}(t_0)^{6}).
$$
We apply Lemma \ref{techlemma}, with $g_1+g_2$ in place of $g$, $f_1$ in place of $f$. Since $\Lambda_3^{\sigma+\eta} (s)=0$, we can write
\begin{align} \label{projboot1}
\mu(t_0)|n(t_0) |_{1+\sigma+\eta}^2 + | \pt n(t_0)|_{\sigma+\eta}^2 &  \leq c\Lambda^{\sigma+\eta} _3(t_0) +  c\big(1+ \nu(t_0)^{4}\big)\\ \nonumber & \leq \Theta (t_0) + c\big(1+ \nu(t_0)^{4}\big),
\end{align} 
where $\Theta(t_0)$ is  the  constant $\Theta$ given by \eqref{lemmetto:2}, and is seen to depend only on $\I^{\mathsf{in}}(t_0),$ $\J^{\mathsf{in}}(t_0),\sigma(t_0),\cic{\nu}(t_0),\eps_\omega(t_0)$.
Observe that all of the above functions are increasing functions of $t_0$, so that  $\Theta$ can be chosen to be increasing in $t_0$ as well.

We turn to \eqref{bootstrap:5}. Let us recall \eqref{lemma:Q-2} from Lemma \ref{superlemma}, which, written for $t \in [s,t_0]$,  reads, in view of \eqref{projboot1},
\begin{align} \label{lemmetto:3}
&\ddt \Lambda_1^{\sigma+\eta} + 2\omega| \pt n|_{\sigma+\eta}^2   \\ \nonumber &\leq \mu'|n |_{1+\sigma+\eta}^2 + c(1+\nu^{3})(|\pt u|_\sigma+|\pt p|+\mu|p|_1^2)(1+\mu|n|_{1+\sigma+\eta}^2 + |\pt n|_{\sigma+\eta}) \\& \leq   c\Theta(t_0)\left(1+ \nu^7\right) \left( \textstyle \frac{\mu'}{\mu} + |\pt u|_\sigma+|\pt p|+\mu|p|_1^2\right). \nonumber
\end{align}
Now, for $s \leq t_1 \leq t_2 \leq t_0$, integrate \eqref{lemmetto:3} on the interval $[t_1,t_2]$. We find, proceeding as before, 
\begin{align*}
&\int_{t_1}^{t_2}   \left(1+ \nu(\tau)^7\right) \left(|\pt u (\tau)|_\sigma+|\pt p(\tau)|+\mu(\tau)|p(\tau)|_1^2\right)\, \d \tau \\ &\leq c(1+\cic{\nu}(t_0)^{7}  \I^{\mathsf{in}}(t_0)^{\frac12})\left( 1 +(t_2-t_1)^{\zeta} \right).
\end{align*}
The remaining term  in the right-hand side of \eqref{lemmetto:3} is bounded  as follows, by using \eqref{pospart}:
$$  \int_{t_1}^{t_2}   \frac{\mu'(\tau)}{\mu(\tau)} (1+\nu(\tau)^7) \; \d \tau  \leq c(1+\cic{\nu}(t_0)^7)(1+(t_2-t_1)^\theta);$$ 
note that this term can be simply neglected when $\mu$ is a decreasing function on $\R$.  
Summarizing, and  using the bound \eqref{lemma:Q-ctrl1} together with \eqref{projboot1},  we finally obtain
\begin{align*}
\omega(t_0)\int_{t_1}^{t_2}   |\pt n(\tau)|^2_{\sigma+\eta} \, \d \tau &\leq \Lambda_1^{\sigma+\eta}(t_0) +  c(1+\cic{\nu}(t_0) + \I^{\mathsf{in}}(t_0) )\left( 1 +(t_2-t_1)^{\max\{\zeta,\theta\}} \right) \\ &  \leq c\Theta (t_0) + c\big(1+ \cic{\nu}(t_0)^{7}+ \I^{\mathsf{in}}(t_0) )\left( 1 +(t_2-t_1)^{\max\{\zeta,\theta\}} \right).   \end{align*}
We compare this last inequality with \eqref{projboot1}, and setting
$$
 \I^{\mathsf{out}}(t_0)=\K^{\mathsf{out}}(t_0) = c \max \big\{1,\textstyle\frac{1}{\omega(t_0)}\big\}\left[\Theta(t_0) + c\big(  \I^{\mathsf{in}}(t_0) + \J^{\mathsf{in}}(t_0)\big)\right], 
$$ and $\beta_{\mathsf{out}} =  \max\{\zeta,\theta\}$,
we obtain \eqref{bootstrap:4}-\eqref{bootstrap:5}.
\end{proof}

\begin{proof}[Proof that \eqref{bootstrap:1} implies \eqref{bootstrap:3}]Fix $s \in \R$ and the final time $t_0 \geq s$. Similarly to Lemma \ref{superlemma},  we define the functionals
\begin{align*}
 &\Psi_1^\sigma(t)=\mu(t)|p(t)|_{1+\sigma}^2 + | \pt p(t)|_\sigma^2 + |p(t)|_\sigma^2  +2\l \phi(p(t)) , A^{\sigma} p(t) \r,&
 \\ & \Psi_{2}^\sigma (t) = \Lambda_2^{\sigma}(p(t),\pt p (t); t) ,
 \end{align*} 
Since $|\l \phi(p) , A^{\sigma} p \r| \leq c|p|_1^3|p|_{1+\sigma}$, we have the usual bounds:
$$
\textstyle \frac{\mu(t)}{2}|p(t) |_{1+\sigma}^2 + | \pt p(t)|_{\sigma}^2 - c\left(1+\nu(t)^{4}\right) \leq \Psi_1^{\sigma} (t)  \leq  2\mu(t) |p(t) |_{1+\sigma}^2 + | \pt p(t)|_{\sigma}^2 + c\left(1+ \nu(t)^{4}\right).
$$
Now, assuming that  $(p,\pt p)$ is   sufficiently regular, we  multiply  the
equation \eqref{SYS-Q}
 by $A^{\sigma}\pt p$. This yields
\begin{align} \label{lemma:intP-2}
\ddt \Psi_1^{\sigma} -\mu'|p |_{1+\sigma}^2 + 2\omega| \pt p|_{\sigma}^2  & =  2\l\phi' (p)\pt p,  A^{\sigma}  p \r  \\ \nonumber  & \leq c\|p^2\|_{L^3}\|\pt p\|_{L^{\frac{6}{3-2\sigma}}} \|A^\sigma p\|_{L^{\frac{6}{1+2\sigma}}} \\ 
\nonumber  & \leq c|p|_1^2 |\pt p|_{\sigma}|p|_{1+\sigma} \leq c\mu|p|_1^2(1+\nu^6)\Psi_1^\sigma.
\end{align} A multiplication by $A^\sigma p$ implies 
\begin{equation} \label{lemma:intP-3}
 \ddt  \Psi_2^\sigma +
2\mu| p |_{1+\sigma}^2 -\omega'|p|_\sigma^2 - 2|\pt p|_\sigma^2 = -2 \l \phi(p),p \r \leq c|p|_1^3|p| \leq c \nu^{\frac32},
\end{equation}
where we used Lemma \ref{lemma:P} in the last inequality. The last two inequalities and the inequality
$$
\textstyle-\frac{1}{2} |\pt p(t)|_{\sigma}^2 -\nu(t)\leq \Psi_2^{\sigma}(t) \leq c |\pt p(t)|_{\sigma}^2 +  \nu(t),
$$
give for $\Psi_3^\sigma(t)=\Psi_1^\sigma(t)+2\eps_\omega(t) \Psi_2^\sigma(t)$ the inequality
\begin{equation} \label{lemma:intP-4}
\ddt \Psi_3^{\sigma} +2\eps_\omega\Psi_3^\sigma  \leq c\mu|p|_1^2(1+\nu^6)\Psi_3^\sigma + c \nu^{\frac32}.
\end{equation}
The exponential decay of $\mu(t)| p(t)|^2_1$ from Lemma \ref{lemma:P} guarantees that
$$
\int_{s}^{t_0} c\mu(t)|p(t)|_1^2(1+\nu(t)^6)\, \d t \leq c(1+\cic{\nu}(t)^6), 
$$
so that an application of the   Gronwall lemma to \eqref{lemma:intP-4} yields the control
$$
\Psi_1^{\sigma}(t) \leq |\Psi_3^{\sigma}(t)| +c|\Psi_2^{\sigma}(t)| \leq c |\Psi_3^{\sigma}(s)| + c\cic{\nu}(t_0)^6\leq c \I^{\mathsf{in}}(t_0), \qquad \forall t \in [s,t_0].
$$
Here, we used that $\I^{\mathsf{in}}$ dominates $c(1+\cic{\nu}^8)$.
Going back to \eqref{lemma:intP-2}, and integrating between $t_1$ and $t_2$ we finally obtain
$$
\int_{t_1}^{t_2} | \pt p(\tau)|_{\sigma}^2\,\d \tau \leq  \J^{\mathsf{out}}(t_0)(1+ (t_2-t_1)^\theta),
$$
provided we set $\J^{\mathsf{out}}(t_0) =c \omega^{-1}(t_0)\I^{\mathsf{in}}(t_0). $
\end{proof}

\section{Proof of Theorem \ref{thm:fd}} \label{proof:thm:fd} 

Theorem \ref{thm:fd} is a consequence of the following proposition, which in turn is closely related to  Lemma 6.1 of \cite {DDT}, see Remark \ref{farlock}.
\begin{proposition} \label{frac:dec}
Suppose that there  
exists a decomposition
\begin{equation} \label{DZdeco}  
 S(t,s)z = D_z(t,s) + K_z(t,s) , \qquad z \in \A(s), \, s\leq t,
\end{equation} such that the following squeezing property holds.\vskip1.5mm \noindent
\textsc{(SP)}\, There exists a positive function $\F$ and a decay rate $\hat\eps_\omega$, depending only on  $h_1$ from Theorem \ref{thm:reg} and on the physical parameters of the problem, such that for each $ t_0 \in \R,\, t_\star >0$, and for every  $s \leq t_0-t_{\star}$, $z^1,z^2 \in \mathcal A(s)$,
\begin{equation} \label{smooth:decay} \tag{SP1}
\|D_{z^1}(s+t_\star,s ) -D_{z^2}(s+t_\star,s )   \|_{X_{s+t_\star}}^2 \leq C\|  z^1-z^2\|_{{X_s}}^2\e^{-\hat\eps_\omega(t_0) t_\star},
\end{equation}
and
\begin{equation} \label{smoooth} \tag{SP2}
\|K_{z^1}(s+t_\star,s) - K_{z^2}(s+t_\star,s) \|^2_{ 
X^1_{s+t_\star}} \leq  \F(t_0,t_\star)  \|   z^1-   z^2 \|_{ X_{s}}^2.  
\end{equation}
   Then Theorem \ref{thm:fd} holds, with $h_2$ depending only on the function $\F$.
\end{proposition}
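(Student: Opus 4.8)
\textbf{Plan of proof (Proposition~\ref{frac:dec}$\Rightarrow$Theorem~\ref{thm:fd}).}
The plan is to run the classical squeezing/covering argument for the fractal dimension, using the decomposition \eqref{DZdeco} as the ``one--step smoothing'', but carried out along a \emph{discrete} chain of pullback times. Fix $t_0\in\R$ and set $s_j:=t_0-jt_\star$ for $j\in\N$, where $t_\star=t_\star(t_0)>0$ will be chosen below. By invariance $\A(t_0)=S(t_0,s_j)\A(s_j)$, and by Corollary~\ref{cor:uniqueness} together with \eqref{bootstrap:start} the set $\A(s_j)$ is contained in a single $X_{s_j}$--ball of radius $\Q(R_\AA)$, uniformly in $j$. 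I would estimate the covering number $\mathcal N_\eps(\A(t_0),X_{t_0})$ by iterating one elementary covering step on each of the intervals $[s_{j+1},s_j]$, and then read off $\dim_{X_{t_0}}\A(t_0)$ from the resulting exponential--in--$\log(1/\eps)$ bound. This is a quantitative refinement of the mechanism behind Lemma~6.1 of \cite{DDT} (cf.\ Remark~\ref{farlock}).

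\textbf{The one--step estimate.}
First I would choose $t_\star=t_\star(t_0)$ so large that $C\e^{-\hat\eps_\omega(t_0)t_\star}\le\tfrac1{64}$. Then, for every $j\ge0$, property (SP1) makes $D_{\cdot}(s_j,s_{j+1})$ a $\tfrac18$--contraction from $(\A(s_{j+1}),\|\cdot\|_{X_{s_{j+1}}})$ into $X_{s_j}$, and, with $L:=\F(t_0,t_\star)^{1/2}$, property (SP2) makes $K_{\cdot}(s_j,s_{j+1})$ an $L$--Lipschitz map from $(\A(s_{j+1}),\|\cdot\|_{X_{s_{j+1}}})$ into $X^1_{s_j}$. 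The key elementary lemma is then: \emph{if $V\subset\A(s_{j+1})$ has $X_{s_{j+1}}$--diameter $\le d$, then $S(s_j,s_{j+1})V$ can be covered by $Q_j$ balls of $X_{s_j}$ of radius $\tfrac d4$}, where
\[
Q_j:=\mathcal N_{1/(8L)}\big(B^1_{s_j},X_{s_j}\big),\qquad B^1_{s_j}:=\{z\in X^1_{s_j}:\|z\|_{X^1_{s_j}}\le1\}.
\]
Indeed $D_{\cdot}V$ lies in an $X_{s_j}$--ball of radius $\tfrac d8$ and $K_{\cdot}V$ lies in an $X^1_{s_j}$--ball of radius $Ld$, which is covered by $Q_j$ balls of $X_{s_j}$ of radius $\tfrac d8$; summing the two contributions yields the claim.

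\textbf{Iteration and conclusion.}
Starting from the single $X_{s_n}$--ball of radius $\Q(R_\AA)$ containing $\A(s_n)$ and applying the one--step estimate piecewise along $s_n\to s_{n-1}\to\cdots\to s_0=t_0$ (using $\A(s_j)=S(s_j,s_{j+1})\A(s_{j+1})$ at each stage), I obtain that $\A(t_0)$ is covered by $\prod_{j=0}^{n-1}Q_j$ balls of $X_{t_0}$ of radius $2^{-n}\Q(R_\AA)$. Hence, \emph{provided}
\[
Q(t_0):=\sup_{s\le t_0}\mathcal N_{1/(8L)}\big(B^1_{s},X_{s}\big)<\infty,
\]
for $\eps\in(0,\Q(R_\AA))$ we take $n=\lceil\log_2(\Q(R_\AA)/\eps)\rceil$ and get $\mathcal N_\eps(\A(t_0),X_{t_0})\le Q(t_0)^{\,1+\log_2(\Q(R_\AA)/\eps)}$, so that
\[
\dim_{X_{t_0}}\A(t_0)=\limsup_{\eps\to0^+}\frac{\log\mathcal N_\eps(\A(t_0),X_{t_0})}{\log(1/\eps)}\le\frac{\log Q(t_0)}{\log2}=:h_2(t_0).
\]

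\textbf{Main obstacle.}
I expect the delicate point to be the \emph{uniform} entropy bound $Q(t_0)<\infty$, because the chain of times runs off to $-\infty$ and a priori the spaces $X_s,X^1_s$ degenerate with $s$. The decisive remark is that in \eqref{spaces} the weight $\mu(s)^{1/2}$ multiplies the $H^{1+\ell}$--component of \emph{both} the $X^1_s$-- and the $X_s$--norm; after the substitution $\tilde u=\mu(s)^{1/2}u$ the genuine gain of compactness is the $s$--independent embedding $H^2\Subset H^1$, while the $L^q$--term is controlled by the $H^1$--term (via $H^1\hookrightarrow L^6\supset L^q$ on the bounded $\Omega\subset\R^3$) with a weight $\mu(s)^{-1/2}\le\cic{\nu}(t_0)^{1/2}$ bounded through (M3). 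Therefore $Q(t_0)$ is dominated by a product of two universal Kolmogorov entropies — of $H^2\Subset H^1$ and of $H^1\Subset L^2$ — at a scale that depends on $t_0$ only through $\F(t_0,t_\star(t_0))$, $\hat\eps_\omega(t_0)$ and $\cic{\nu}(t_0)$; by the standing hypotheses these in turn depend only on $h_1$ and the physical parameters, which gives $h_2$ the asserted dependence on $\F$. Monotonicity of $h_2$ is obtained, if needed, by passing to $\sup_{t\le t_0}h_2(t)$, and an explicit $h_2$ follows from the polynomial growth $\log\mathcal N_\delta\asymp\delta^{-3}$ of these Sobolev entropies.
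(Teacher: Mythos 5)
Your proposal is correct and follows essentially the same route as the paper: the paper's entire proof is Remark \ref{farlock}, which chooses $t_\star(t_0)$ so that $C\e^{-\hat\eps_\omega(t_0)t_\star}\leq\rho<\frac14$ and then invokes the abstract squeezing-to-dimension Lemma 6.1 of \cite{DDT}, whose covering iteration is exactly what you reconstruct from scratch. The one point you add beyond the paper's citation — the uniform-in-$s$ bound on the entropy of the embedding $X^1_s\Subset X_s$ despite the time-dependent weight $\mu(s)^{1/2}$, reduced via (M3) to the fixed embeddings $H^2\Subset H^1$ and $H^1\Subset L^2$ — is genuinely needed for the cited lemma to apply in the time-dependent framework, and your handling of it is correct.
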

\begin{remark} \label{farlock} By choosing $t_\star=t_\star(t_0)$ such that 
$
C\e^{-\hat\eps_\omega(t_0) t_\star} \leq \rho < \textstyle \frac14$, 
the decomposition \eqref{DZdeco} satisfies the hypotheses (6.1)-(6.2) of Lemma 6.1 of \cite{DDT}, and therefore, we obtain the conclusion from this lemma. 
\end{remark}
Thus, the remainder of the section is devoted to the verification of the squeezing property (SP). Throughout, it is understood that the   data $z$ at time $s \in \R $  belongs to $\A(s)$, and  we will use the notation
$$
S(t,s) z=\big(u_z(t;s),\pt u_z(t;s)\big), \qquad t \geq s, \,z \in \A(s).
$$
Moreover, the notation $\Q(\cdot)$ will be used to denote a generic positive increasing function of time $t$, depending only on the function $h_1$ from Theorem \ref{thm:reg} and on the physical parameters of the problem.
As a consequence of Theorem \ref{thm:reg}, we have the uniform (in $s$) bound
\begin{equation} \label{fd:bound}  
\sup_{s\leq t \leq t_0}|u_z(t;s)|_2^2 \leq \cic{\nu}(t_0) h_1(t_0).
\end{equation}
The decomposition  \eqref{DZdeco} from Proposition \ref{frac:dec} is achieved as follows:
$$  S(t,s)z= D_z(t,s) + K_z(t,s)  =  (
d_z(t;s),  \pt d_z(t;s) ) + (k_z(t;s), \pt k_z(t;s) ) $$ where
\begin{equation}
\label{SYS-TILDE1}
\begin{cases}
\displaystyle
\ptt d_z+\omega  \pt d_z   + \mu A   d_z+d_z= 0,  \\
D_z(s,s)=z,
\end{cases}
\end{equation}
and
\begin{equation}
\label{SYS-TILDE2}
\begin{cases}
\displaystyle
\ptt k_z+\omega  \pt k_z   + \mu A   k_z=  -d_z - \varphi(u_z),   \\
K_z(s,s)=0.
\end{cases}
\end{equation}
\subsection{Verification of \eqref{smooth:decay}} We peruse once again  the proof of  \eqref{two0}, Theorem \ref{wp:th},
this time replacing $\varphi $ by  $\hat\varphi(y)=y$. It follows that
\begin{equation}
\label{decay1}
\|D_z(t,s) \|_{X_t}^2 \leq   C\|  z\|_{{X_s}}^2\e^{-\hat\eps_\omega(t) (t-s)}, 
\end{equation}
where $\hat\eps_{\omega}$ is given by \eqref{epsom} with $c_1=1$. Since $z \mapsto D_z(t,s)$ is linear and $\hat\eps_\omega$ is decreasing, it follows that 
$$
\|D_{z^1}(t,s ) -D_{z^2}(t,s )   \|_{X_{t}}^2 \leq  C\|  z^1-z^2\|_{{X_s}}^2\e^{-\hat\eps_\omega(t_0) (t-s)}, \qquad \forall s \leq t \leq t_0.
$$
Thus \eqref{smooth:decay} follows, with $t_\star=t-s$.
\subsection{Verification of \eqref{smoooth}} We begin by noting that the   
difference $(k(t;s), \pt k (t;s)) =  K_{z^1}(t,s)  -
K_{z^2}(t,s)$ solves the Cauchy problem
\begin{equation}
\label{SYS-TILDE}
\begin{cases}
\displaystyle
\ptt   k +\omega  \pt k + \mu A k=f  + g,   \\
k(s;s)= 0 , \pt k(s;s) =  0,
\end{cases}
\end{equation}
where $\bar z=z^1-z^2$ and   $$
f(t;s)= -d_{\bar z}(t;s)  , \qquad g(t,s)=-\big(\varphi\big(u_{z^1}(t;s)\big)-\varphi\big(u_{z^2}(t;s)\big)\big).$$
We recall here, as a consequence of the continuous dependence estimate \eqref{continuous:dep}, that \begin{equation} \label{INTER-TILDE}
\| u_{z^1}(t;s)-u_{z^2}(t;s)\|^2_{X_t} \leq \Q(t_0) \exp\big((1+\cic{\nu}(t_0)) (t-s) \big)  \|  \bar z \|_{ X_{s}}^2, \qquad s \leq t \leq t_0.
\end{equation}
With this in hand, we begin the core of the proof of \eqref{smoooth}. First, we derive  appropriate estimates for the  terms $f,g$ in \eqref{SYS-TILDE}. We deduce from \eqref{decay1} that
\begin{equation}
\label{decay2}
|f(t;s) |^2_1=  |d_{\bar z}(t;s) |_1^2 \leq \cic{\nu}(t_0)\|D_z(t,s) \|_{X_t}^2   \leq \Q(t_0)\|  \bar z\|_{{X_s}}^2,\qquad   s \leq t \leq t_0.
\end{equation}
Regarding $g$, we write, omitting the time dependencies
$$
\nabla_x g(t;s) = \varphi'(u_{z^1}) \nabla_x (u_{z^1}-u_{z^2}) + \varphi''(\bar u) (u_{z^1}-u_{z^2}) \nabla_x u_{z^1},$$  for some $\bar u(x,t;s) \leq \max\{u_{z^1}(x,t;s),u_{z^2}(x,t;s)\}.$ The basic estimates we need to control the above terms come from \eqref{fd:bound} and the Sobolev embedding $H^2(\Omega) \hookrightarrow L^\infty(\Omega)$:
\begin{equation}
\label{decay3}
\|u_{z^1}(t;s)\|_{L^\infty} + \|u_{z^2}(t;s)\|_{L^\infty} \leq \Q(t_0), \qquad s \leq t \leq t_0.
\end{equation}
The straightforward estimates
$$
|\varphi'(u_{z^1}) \nabla_x (u_{z^1}-u_{z^2})|^2  
\leq c\big(1+\|u_{z^1} \|_{L^\infty}^{2}) |u_{z^1}-u_{z^2}|_1^2,     
$$
and
\begin{align*}
|\varphi''(\bar u) (u_{z^1}-u_{z^2}) \nabla_x u_{z^1}|^2  &\leq c\big(1+ \|u_{z^1}\|_{L^\infty}+\|u_{z^2}\|_{L^\infty}\big)\|u_{z^1}-u_{z^2}\|^2_{L^6} \|\nabla_x  u_{z^2} \|_{L^{\frac83}}^2   \\ & \leq c\big(1+ \|u_{z^1}\|_{L^\infty}+\|u_{z^2}\|_{L^\infty}\big) |u_{z^1}-u_{z^2}|^2_1 |   u_{z^2} |_{2}^2
\end{align*}
together with an application of  \eqref{fd:bound} and \eqref{decay3} lead us to the bound
\begin{align}
\label{decay4}
|  g(t;s)|_1^2 &\leq c|\nabla_x g(t;s)|^2 \leq \Q(t_0) |u_{z^1}(t;s)-u_{z^2}(t;s)|_1^2\\& \leq \Q(t_0) \exp\big((1+\cic{\nu}(t_0)) (t-s) \big)  \|  \bar z\|_{ X_{s}}^2,  \nonumber\end{align}
for all $s \leq t\leq t_0$, where we used \eqref{INTER-TILDE} in the last passage. 
Now, multiplying \eqref{SYS-TILDE} by $A \pt \bar k$
yields the differential inequality 
\begin{align}
\label{SYS-TILDE12} & \ddt \| (k(t;s),\pt k(t;s))\|^2_{X_t^1} + 2\omega(t) |A^{1/2} \pt k(t;s)|^2\\ \nonumber & \leq \mu'(t)|k(t;s)|_2^2 +  2 \l f+g,A \pt k 
\r \\ &\leq  \nonumber W \mu(t)|k(t;s)|_2^2 + \textstyle\frac{c}{\omega(t_0)}\big(|f(t;s)|_1^2 +c|g(t;s)|_1^2\big) + 2\omega(t) |A^{1/2} \pt k|^2,
\end{align}
for all $s \leq t\leq t_0$. The second passage involves the use of condition (M1) and of the decreasing monotonicity of $\omega$. We summarize  \eqref{decay3}, \eqref{decay4}, and \eqref{SYS-TILDE12} into
$$
\ddt \| (k(t;s),\pt k(t;s))\|^2_{X_t^1} \leq \Q(t_0)\left( \| (k(t;s),\pt k(t;s))\|^2_{X_t^1} +   \exp\big((1+\cic{\nu}(t_0)) (t-s) \big)  \|  \bar z\|_{ X_{s}}^2\right),
$$
for all $s \leq t\leq t_0$. An application of Gronwall's lemma on $(s,s+t_\star)$ and the observation that $(k(s;s),\pt k(s;s))=0$ yields 
\begin{equation}
\label{ottododici}
 \| (k(s+t_\star;s),\pt k(s+t_\star;s))\|^2_{X_t^1} \leq \Q(t_0) \exp\big((1+\cic{\nu}(t_0)) t_\star \big)  \|  \bar z\|_{ X_{s}}^2,
 \end{equation}
which is \eqref{smoooth} with   $\F(t_0,t_\star)$ precisely equal to the right-hand side of \eqref{ottododici}. The verification of (SP) is complete.

\subsection*{Acknowledgments.}  This work was partially
supported by the National Science Foundation under the grant  NSF-DMS-0906440, and by the Research Fund of Indiana University. The authors gratefully thank Noah Graham for sharing with them his insight in the physics of the problem and for bringing to their attention several relevant references, including \cite{FGGIRS}. The authors would like to thank the referee for his careful reading and insightful remarks, which helped us to improve the article.
 
\subsection*{Note added in proofs}{\it At the time when the galley proofs of this article were being corrected, one of the authors (RT) gave a lecture on the topic of this article and \cite{DDT}. Following are some useful remarks resulting from the lecture and the subsequent discussion. \vskip1.5mm \noindent 
$\cdot$ Equation \eqref{SYS-INTRO} contains two viscosity coefficients:
\begin{itemize}\item[$\cdot$]  $\mu(t)$, the viscosity coefficient of the spatial operator. In the original oscillon equation considered in \cite{DDT}, $\mu(t)=\e^{-2Ht}$.  The analysis of the asymptotic behavior is made difficult by both   singularities
$$\lim_{t \to + \infty}\e^{-2Ht} =0, \qquad \lim_{t \to -\infty}\e^{-2Ht} =+\infty.$$  The first difficulty
is overcome by using the pullback attractor framework (even the classical
framework without time dependent spaces would suffice),
where the absorbing family, and the attractor are  allowed to have size
depending explicitly on time and possibly going to $+\infty$ as $t\to + \infty$.  The second difficulty (which, in short, causes fixed balls of $H^1_0(\Omega) \times L^2(\Omega)$ to have huge ``physical'' energy for large negative time) is  circumvented with the introduction of the time
dependent spaces and the restriction of  the basin of
attraction to the pullback-bounded   families. 
\item[$\cdot$] $\omega(t)$, the damping coefficient of the evolution equation, which, classically, makes the dynamical system dissipative, as we have seen in the article.

\end{itemize}
 \vskip1.5mm \noindent 
$\cdot$ The quantity $u$ appearing in \eqref{SYS-INTRO}  represents a scalar field, usually taken to be the Higgs field.
 \vskip1.5mm \noindent 
$\cdot$ In the post-inflation scenario in which this equation is used, a(t) (see Remarks \ref{physcond} and \ref{graham-hyp}) is steadily increasing. We have introduced some mathematical generality in the time-dependent coefficients.
}


\begin{thebibliography}{99}











%

%


%

%





%

%







%


\bibitem{DDT}
{\au  F.~Di Plinio, G.S.~Duane,  R.~Temam,}
{\textit{Time-dependent attractor for the oscillon equation}},
{\jou Discrete Contin.\ Dyn.\ Syst. Ser.\ A}
\textbf{29} (2010), 141--167


 
\bibitem{DP-Hyp}
{\au F.\ Di Plinio, V.\ Pata},
\textit{Robust exponential attractors for the strongly damped wave equation with memory. II},
{\jou Russ. J. Math. Phys.}
\no{16}{61--73}{2009}

\bibitem{DPZ}
{\au F.\ Di Plinio, V.\ Pata, S.\ Zelik},
\textit{On the strongly damped wave equation with memory},
{\jou Indiana Univ.\ Math.\ J.}
\no{57}{757--780}{2008}

\bibitem{Farhi05}
{\au E.\ Farhi, N.\ Graham, V.\ Khemani, et al.} \textit{An oscillon in the SU(2)
gauged Higgs model,}
{\jou Phys.\ Rev.\ D}
\no{72}{101701}{2005}

\bibitem{FGGIRS}
{\au E.\ Farhi, N.\ Graham, A.\ H.\ Guth et al.} \textit{Emergence of oscillons in an expanding background}
{\jou Phys.\ Rev.\ D}
\no{77}{085019}{2008}



\bibitem{FP}
{\au C.\ Foias and G.\ Prodi},
\textit{Sur le comportement global des solutions non-stationnaires des \'equations de Navier-Stokes en dimension $2$,}
{\jou Rend.\ Sem.\ Mat.\ Univ.\ Padova}
\no{39}{1--34}{1967}

\bibitem{Forgacs}
{\au G.\ Fodor, P.\ Forgacs, Z.\ Horvath, A.\ Lukacs}, \textit{Small amplitude quasibreathers and oscillons,}
{\jou Phys.\ Rev.\ D}
\no{78}{025003}{2005}

\bibitem{FFM}
{\au G.\ Fodor, P.\ Forgacs, M.\ Mezei}, \textit{Boson stars and oscillatons in an inflationary universe},
{\jou Phys.\ Rev.\ D}
\no{82}{044043}{2010}

\bibitem{GP}
{\au  M.\ Grasselli,  V.\ Pata,} \textit{On the damped semilinear wave equation with critical exponent,}
{\jou Dynamical Systems and Differential Equations (Wilmington, NC, 2002). Discrete Cont.\ Dyn.\ Systems (suppl.) }
\no{}{351--358}{2003}

\bibitem{GP2}
{\au  M.\ Grasselli,  V.\ Pata,} \textit{Asymptotic behavior of a
parabolic-hyperbolic system,}
{\jou Comm.\ Pure Appl.\ Anal.\ }
\no{3}{849--881}{2004}

\bibitem{H}
{\au M.\ P.\ Hertzberg,} \textit{Quantum radiation of oscillons},
{\jou Phys.\ Rev.\ D}
\no{82}{045022}{2010}

\bibitem{LP}
{\au J.-L.\ Lions. G.\ Prodi},
\textit{Un th\'eor\`eme d'existence et unicit\'e dans les \'equations de Navier-Stokes en dimension 2,}
{\jou  C.\ R.\ Acad.\ Sci.\ Paris }
\no{248}{3519--3521}{1959}

 
 

%




%
\bibitem{MAN}
{\au B.B.~Mandelbrot},
\textit{The fractal geometry of nature},
\eds{W.\ H.\ Freeman and Company}{San Francisco}{1982}


%

\bibitem{P}
{\au  G.\ Prodi},
\textit{On probability measures related to the Navier-Stokes equations in the 3-dimensional case,}
{\jou Air Force Res.\ Div.\ Conti.\ A.P. }
\no{61}{414-466}{1961}





%
%
\bibitem{SCH}
{\au M.~Schroeder},
\textit{Fractals, chaos, power laws},
\eds{W.\ H.\ Freeman and Company}{New York}{1991}


\bibitem{TEM}
{\au R.~Temam},
\textit{Infinite-dimensional dynamical
systems in mechanics and physics},
\eds{Springer}{New York}{1997}







\end{thebibliography}
\end{document}